\newcommand{\op}[1]{\operatorname{#1}}
\newtheorem{thm}{Theorem}[section]
\newtheorem{lem}[thm]{Lemma}
\newtheorem{prop}[thm]{Proposition}
\newtheorem*{nonothm}{Theorem}
\newtheorem{thmx}{Theorem}
\theoremstyle{definition}
\newtheorem{example}[thm]{Example}
\theoremstyle{definition}
\newtheorem{question}[thm]{Question}
\theoremstyle{definition}
\newtheorem{defn}[thm]{Definition}
\theoremstyle{definition}
\newtheorem{remark}[thm]{Remark}
\DeclareMathOperator{\Rep}{Rep}
\DeclareMathOperator{\Irr}{Irr}
\DeclareMathOperator{\Aut}{Aut}
\DeclareMathOperator{\Qut}{\mathbb{Q}ut}
\DeclareMathOperator{\Hom}{Hom}
\DeclareMathOperator{\Cay}{Cay}
\DeclareMathOperator{\Tr}{Tr}
\newcommand{\id}{\mathrm{id}}
\newcommand{\I}{{I}}
\newcommand{\G}{\mathbb{G}}
\newcommand{\Hb}{\mathbb{H}}
\newcommand{\X}{\mathbb{X}}
\newcommand{\C}{\mathbb{C}}
\newcommand{\N}{\mathbb{N}}
\newcommand{\HH}{\mathsf{H}}
\newcommand{\K}{\mathsf{K}}
\newcommand{\Gr}{\mathcal{G}}
\newcommand{\Hr}{\mathcal{H}}
\newcommand{\cO}{\mathcal{O}}
\newcommand{\Cst}{\textrm{C}^\ast}
\newcommand{\ol}[1]{\overline{#1}}
\newcommand{\mc}[1]{\mathcal{#1}}
\numberwithin{equation}{section}
\begin{document}

\title[Quantum Frucht's Theorem]{A quantum Frucht's theorem and quantum automorphisms of quantum Cayley graphs}
\author[Brannan]{Michael Brannan}
\address{Department of Pure Mathematics, Faculty of Mathematics, University of Waterloo. 200 University Ave W, Waterloo, ON N2L 3G1, Canada}
\email{michael.brannan@uwaterloo.ca}

\author[Gromada]{Daniel Gromada}
\address{Czech Technical University in Prague, Faculty of Electrical Engineering, Department of Mathematics, Technická 2, 166 27 Praha 6, Czechia}
\email{gromadan@fel.cvut.cz}

\author[Matsuda]{Junichiro Matsuda}
\address{Department of Pure Mathematics, Faculty of Mathematics, University of Waterloo. 200 University Ave W, Waterloo, ON N2L 3G1, Canada}
\email{junichiro.matsuda@uwaterloo.ca}

\author[Skalski]{Adam Skalski}
\address{Institute of Mathematics of the Polish Academy of Sciences, ul. \'Sniadeckich 8, 00-656 Warszawa, Poland}
\email{a.skalski@impan.pl}

\author[Wasilewski]{Mateusz Wasilewski}
\address{Institute of Mathematics of the Polish Academy of Sciences, ul. \'Sniadeckich 8, 00-656 Warszawa, Poland}
\email{mwasilewski@impan.pl}
\date{December 2024}

\begin{abstract}
We establish a quantum version of Frucht's Theorem, proving that every finite quantum group is the quantum automorphism group of an undirected finite quantum graph. The construction is based on first considering several quantum Cayley graphs of the quantum group in question, and then providing a method to systematically combine them into a single quantum graph with the right symmetry properties. We also show that the dual $\widehat \Gamma$ of any non-abelian finite group $\Gamma$ is ``quantum rigid''. 
 That is,  $\widehat \Gamma$ always admits a quantum Cayley graph whose quantum automorphism group is exactly $\widehat \Gamma$. 
\end{abstract}

\subjclass[2020]{Primary 46L89; Secondary 05C25, 20G42}
\keywords{Quantum graph; quantum automorphism groups; Frucht's theorem}

\maketitle

\section{Introduction and motivation}

The study of symmetries of graphs is a very natural and rich area of research, dating back to the origins of the graph theory, and obviously connected to the theory of permutation groups. The classical Cayley theorem says that every finite group is a subgroup of a finite permutation group (as it acts faithfully on itself), and the added combinatorial structure of graphs and their symmetries makes it possible to strengthen this result in a very satisfactory way. Indeed, let us recall the classical {\it Frucht's theorem} from \cite{Fru39}, which shows that the automorphism groups of graphs are universal among finite groups.

\begin{nonothm} [Frucht]
For every finite group $G$ there exists a finite simple graph $\Gr$ such that $G \cong \Aut(\Gr)$.    
\end{nonothm}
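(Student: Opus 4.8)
The plan is to reduce the statement to Cayley's theorem and then to ``rigidify'' the resulting Cayley graph into an honest finite simple graph. First I would fix a generating set $S=\{s_1,\dots,s_k\}$ of $G$ and form the \emph{Cayley color digraph} $D=\Cay(G,S)$: its vertex set is $G$, and for each $g\in G$ and each $j$ there is an arc from $g$ to $gs_j$ carrying the color $j$. The left translations $g\mapsto hg$ act as automorphisms of $D$ preserving both colors and orientations, and the elementary point is that these are \emph{all} such automorphisms. Indeed, a color- and orientation-preserving automorphism $\phi$ is determined by $\phi(e)$: writing $g=s_{j_1}\cdots s_{j_m}$ and following the colored arcs, one sees by induction that $\phi(g)=\phi(e)\,g$. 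Hence the group of color-preserving automorphisms of $D$ is exactly $G$. To avoid degenerate valencies later I would also arrange $k\ge 2$, for instance by taking $S=G\setminus\{e\}$ when $|G|\ge 3$; the cases $|G|\le 2$ are handled by hand, since $K_1$ and $K_2$ have automorphism groups $\{e\}$ and $\Z/2$ respectively.

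The substantive step is to encode the color and orientation data into plain undirected edges so that the unwanted symmetries are suppressed. For this I would replace each colored arc by a rigid asymmetric \emph{gadget}: subdivide each arc of color $j$ running from $u$ to $v$ by two internal vertices, giving a path $u-p-q-v$, and then attach to $p$ a pendant path of length $2j-1$ and to $q$ a pendant path of length $2j$. The unordered pair of pendant lengths $\{2j-1,2j\}$ identifies the color $j$, while the parities (odd on the tail-adjacent vertex $p$, even on the head-adjacent vertex $q$) record the orientation of the original arc. Performing this substitution on every arc yields the desired finite simple graph $\Gr$.

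To finish I would show that every automorphism of $\Gr$ restricts to a color- and orientation-preserving automorphism of $D$, whence it induces an element of $G$; the converse inclusion is clear, since each left translation visibly extends over the gadgets. A valency argument pins down the vertex classes: the group-vertices have degree $2k$, the subdivision vertices $p,q$ have degree $3$, and all pendant vertices have degree at most $2$; moreover each pendant length is recoverable as a graph invariant, namely a distance to a leaf. Consequently an automorphism must permute the group-vertices among themselves and carry each gadget to a gadget of the same color and orientation, thereby inducing an element of the color-preserving automorphism group of $D$, i.e.\ of $G$. This gives $G\cong\Aut(\Gr)$.

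The main obstacle is exactly this last rigidity verification: one has to rule out all spurious symmetries introduced by the gadgets. The key points are that the two pendants of a single gadget cannot be swapped, since their lengths have opposite parity; that gadgets of different colors can never be interchanged, since their length-patterns are disjoint; and that no gadget vertex can be sent to a group-vertex, since the valency gap $2k>3\ge 2$ forbids it, which is where the assumption $k\ge2$ is used. A secondary nuisance is the degenerate small cases noted above, together with the bookkeeping when a generator is an involution, so that the corresponding arcs form $2$-cycles rather than loops; none of these affect the argument, which then completes the proof.
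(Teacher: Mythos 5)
Your proof is correct, and the first half is essentially the paper's Proposition~\ref{P.classical1}: a colour- and orientation-preserving automorphism of the Cayley colour digraph is pinned down by the image of the identity (the paper phrases this as ``the unique coloured neighbour'' argument, using the full generating set $S=G$ with singleton colours, while you allow an arbitrary generating set -- an inessential difference). Where you genuinely diverge is in the decolouring step: your gadget (subdivide each arc $u\to v$ of colour $j$ into $u-p-q-v$ and hang pendant paths of lengths $2j-1$ and $2j$ off $p$ and $q$) is precisely Frucht's original construction, which the paper displays in Section~\ref{sec.step2} only to set it aside. The paper's own route (Propositions~\ref{P.classical21} and~\ref{P.classical22}) instead replaces each \emph{vertex} by a copy of an auxiliary graph $\Hr$ whose vertices have pairwise distinct degrees and places the $i$-th coloured graph on the $i$-th layer, so that a degree count forces every automorphism to preserve layers. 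The trade-off: your argument is the classical one and needs no hypothesis on the input graphs, whereas the paper's requires them to be regular (automatic for Cayley graphs) but produces a much smaller graph and -- the point of the whole article -- quantises, because replacing vertices by tensoring with a classical gadget makes sense for quantum adjacency matrices, while subdividing and decorating individual edges does not. Your rigidity verification (degree gap $2k>3$, parity of pendant lengths distinguishing head from tail, disjoint length-patterns separating colours) is sound, including the small-group and involution caveats you flag.
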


The main idea of the proof of Frucht's theorem is fairly straightforward. One first lists elements of our group, writing  $G = \{g_1, g_2, \ldots, g_n\}$, and defines a {\it directed}, {\it edge-coloured} graph $\Gr_0$ whose vertex set coincides with the set $G$ and the edge colours are given by $[n] = \{1, \ldots, n\}$. 
 An ordered pair $(g_i,g_j)$ is then an edge of $\Gr_0$ with colour $k \in [n]$ if $g_ig_j^{-1} = g_k$.  Note that $\Gr_0$ is simply a coloured version of the Cayley graph of $G$, with respect to the full generating set $S = G$. Alternatively, we can view each colour $k$ as representing a separate Cayley graph corresponding to the (not necessarily generating) set $S=\{g_k\}$. Next, one can go on to show that $G = \Aut\Gr_0$. We will recall the statement and its proof below as Proposition~\ref{P.classical1}.  In the second step one applies a procedure of creating a new simple (non-coloured) graph $\Gr$, whose automorphism group is the same as in case of $\Gr_0$. We recall Frucht's construction in Section~\ref{sec.step2} and also formulate a new,  alternative approach to proving this result in Propositions~\ref{P.classical21}, \ref{P.classical22}. 
 
 Our new approach to  Frucht's theorem comes with two distinct advantages.  Firstly, our construction is more efficient in the sense that it yields graphs $\Gr$ with much fewer vertices than those provided by Frucht's method.  Secondly, our construction turns out to be more amenable to {\it quantisation of Frucht's theorem}, which is the main goal of this paper.  More precisely, we wish to understand to what extent Frucht's theorem holds in the realm of {\it quantum symmetry groups} (see for example \cite{ASS17}).  
This problem was already addressed before, with certain non-universality results obtained. First, we may ask whether for every finite quantum group $\G$ (viewed as an object described by a finite-dimensional CQG-algebra, see \cite{DiK94}, fitting also in the theory of Woronowicz \cite{NT13}), there is a graph $\Gr$ such that $\G$ is its quantum automorphism group. This turns out to be false, as there are actually finite quantum groups not faithfully acting on any finite classical spaces -- in other words, that do not appear as (quantum) subgroups of \emph{quantum permutation groups} $S_N^+$ \cite{BBN12}. This can be reformulated by saying that a naive quantum version of the Cayley theorem fails. So, we might relax our expectation and ask, for any (finite) quantum subgroup of $S_N^+$, whether there is a graph that has the prescribed quantum automorphism group. Nevertheless, Banica and McCarthy showed in \cite{BM22} that this is false as well. For instance, the Kac--Paljutkin quantum group introduced in \cite{KP66} is finite, it acts faithfully on four points, but it is not an automorphism group of any finite graph.

From a certain viewpoint, the above no-go results are quite natural. Every finite group can be realized as a subgroup of a permutation group since it acts at least on itself. 
But for quantum groups, a problem appears already in this first step: every finite quantum group does act on itself, but it is itself not a classical set. Hence, in order to have a perfectly analogous situation for finite quantum groups, we must not only consider actions on sets, but also on {\it quantum sets}: finite-dimensional C*-algebras equipped with distinguished (tracial) states.  Within the framework of quantum sets, there is a well-understood notion of a {\it quantum graph} $\Gr$ over a quantum set -- see e.g.  \cite{MRV18, MRV19, Daw24, Was24}.  Moreover, one can construct the {\it quantum automorphism group of $\Gr$}, $\Qut(\Gr)$, which is a compact matrix quantum group which generalises the usual quantum automorphism group of a graph \cite{Daw24,MRV19, BCEHPSW20}.  In general $\Qut(\Gr)$ can be an infinite compact quantum group, even when $\Gr$ is classical.  This can easily be seen by taking $\Gr$ to be a complete (quantum) graph.  For more exotic examples, one can consider Hadamard graphs and related examples \cite{Gr24, Gr23}, the many examples considered by Schmidt in \cite{Sch20a, Sch20b}, or the Higman-Sims graph - the latter yielding a quantum automorphism group whose dual is a discrete quantum group with property (T) (see \cite{RV24}, where the authors point out that it can be deduced from a combination of results from \cite{Kuperberg97}, \cite{Arano18}, \cite{Edge19}).  
At this time, we do not know if every compact matrix quantum group arises as the quantum automorphism group of a quantum graph quantum graphs. 
Nevertheless, one can at least ask: Can every {\it finite} quantum group be realised as the quantum automorphism group of a quantum graph? The main result of our paper is that this is indeed the case, establishing a satisfactory generalisation of Frucht's theorem.

\begin{thmx}[Quantum Frucht's theorem, Theorem \ref{thm:qF}]
\label{T.qFrucht}
Let $\G$ be a finite quantum group.  Then there is an undirected finite quantum graph $\Gr$ such that $\G = \Qut(\Gr)$.  That is, $\G$ acts faithfully on $\Gr$, and the canonical quotient map $C(\Qut(\Gr)) \to C(\G)$ is an isomorphism of Hopf *-algebras.
\end{thmx}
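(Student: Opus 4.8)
The plan is to follow the two-step structure of the classical argument, replacing each combinatorial ingredient by its operator-algebraic counterpart on the canonical quantum set $(C(\G),h)$, where $h$ is the Haar state. The natural candidate for ``$\G$ acting on itself'' is the comultiplication $\Delta\colon C(\G)\to C(\G)\otimes C(\G)$, which is a faithful, $h$-preserving coaction and hence exhibits $\G$ as a quantum subgroup of Wang's quantum automorphism group $\Qut(C(\G),h)$ of the underlying quantum set. A \emph{quantum Cayley graph} should then be an undirected quantum graph on $(C(\G),h)$ whose adjacency operator is a right-convolution (equivalently, left-invariant) operator; such operators commute with $\Delta$ by construction, so that $\G\leq\Qut(\Gr)$ for every such $\Gr$. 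The first task is to choose a family of these whose joint symmetry is exactly $\G$.

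Step 1 is the quantum analogue of Proposition~\ref{P.classical1}. I would fix a spanning family $a_1,\dots,a_m$ of self-adjoint elements implementing right convolutions that satisfy the quantum-graph axioms, chosen so that the $a_k$ generate the entire right-convolution algebra on $L^2(\G,h)$. The key point is a commutation/bicommutant principle: the commutant of the right regular representation of $\G$ inside $B(L^2(\G,h))$ is precisely the algebra generated by the left regular representation. Thus a coaction preserving $h$ and commuting with every adjacency $A_k$ has its ``linear part'' forced to lie in the left-convolution algebra; combined with the requirement that it be a unital $*$-homomorphism compatible with the multiplication of $C(\G)$, a rigidity computation should identify it with $\Delta$. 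This would yield $\Qut$ of the \emph{family} $\{\Gr_k\}_k$ (the ``colour-preserving'' quantum symmetries) equal to $\G$.

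Step 2 is the quantum analogue of the symmetrisation in Section~\ref{sec.step2} and Propositions~\ref{P.classical21},~\ref{P.classical22}. It remains to amalgamate the $\Gr_k$ into a single undirected quantum graph $\Gr$ with $\Qut(\Gr)=\bigcap_k\Qut(\Gr_k)=\G$, and no more. I would realise $\Gr$ on an enlarged quantum set --- e.g.\ a direct sum of copies of $(C(\G),h)$ together with small ``marker'' quantum sets --- equipped with a single self-adjoint adjacency operator that (i) restricts to the $k$-th Cayley adjacency on the $k$-th copy, and (ii) wires the copies and markers together rigidly enough that any quantum automorphism must preserve each copy and act by the \emph{same} element of $\G$ on all of them. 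Verifying (i)--(ii) reduces, via the universal property of $\Qut$, to checking that the combined adjacency operator's commutant computes the intersection above.

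The main obstacle is the reverse inclusion $\Qut(\Gr)\subseteq\G$ in the presence of genuinely \emph{quantum} symmetries, which have no classical analogue and are exactly what the known no-go results exploit. Two points deserve the most care. First, in Step 1 one must ensure that ``commuting with all chosen convolution adjacencies'' really forces the full comultiplication $\Delta$, and not merely a larger intermediate quantum group; this is where the choice of the spanning family $\{a_k\}$ and the bicommutant input are essential. Second, and most delicate, in Step 2 the marker/copy gadgets must be designed so that they themselves contribute \emph{no} extra quantum automorphisms and admit no non-classical mixing of the copies --- unlike the classical case, where rigid asymmetric gadgets are easy to produce, here one must rule out quantum permutations of the combining data. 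I expect the bulk of the technical work, and the genuinely new ideas, to reside in this quantum gadget construction and in the associated commutant computation that certifies $\Qut(\Gr)=\G$ exactly.
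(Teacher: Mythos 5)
Your plan coincides with the paper's: Step 1 is Proposition \ref{P.quantum1} (a finite family of convolution-operator quantum Cayley graphs on $(C(\G),h)$ whose joint quantum automorphism group is exactly $\G$), and Step 2 is Theorem \ref{T.q22} (amalgamation of the family into a single undirected quantum graph over an enlarged quantum vertex set). The architecture is right, but the two places you defer as ``rigidity computations to be done'' are exactly where the content lies, and your sketch of each needs repair. For Step 1, the paper does not invoke a bicommutant theorem in $B(L^2(\G,h))$: a quantum symmetry is a coaction $\alpha\colon C(\G)\to C(\G)\otimes\cO(\Hb)$ whose matrix entries lie in a noncommutative algebra, so the scalar commutant computation does not apply as stated. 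Instead (Lemma \ref{lem:commute}) one observes that commuting with every left-convolution operator $L_\omega$ is \emph{equivalent} to the intertwining identity $(\Delta_\G\otimes\id)\circ\alpha=(\id\otimes\alpha)\circ\Delta_\G$, and then $\gamma:=(\epsilon_\G\otimes\id)\circ\alpha$ is a Hopf $*$-morphism with $\alpha=(\id\otimes\gamma)\circ\Delta_\G$; no multiplicativity ``rigidity computation'' beyond this counit trick is needed. To realise ``all $L_\omega$'' by finitely many quantum graphs one uses that $P\star(\cdot)$ is a quantum adjacency matrix precisely when $P$ is an orthogonal projection, that projections linearly span $C(\G)$, and one must discard the support projection of the counit so that the resulting Cayley graphs are loop-free --- a hypothesis your sketch omits but which the amalgamation step requires.

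For Step 2, two points you gloss over are essential. First, the Cayley adjacencies $A_i=P_i\star$ are in general \emph{not} self-adjoint, so a self-adjoint $A'$ cannot simply ``restrict to the $k$-th Cayley adjacency on the $k$-th copy''; the paper instead pairs copies $2i-1$ and $2i$ and places $A_i\otimes E_{2i-1,2i}+A_i^*\otimes E_{2i,2i-1}$ on off-diagonal blocks, together with $\I\otimes B$ for an auxiliary classical graph $\tilde{\Hr}$ on $2n+1$ points. Second, the device that rules out quantum mixing of the copies is not an asymmetric ``marker set'' but the degree spectrum: because the inputs are regular (automatic for Cayley graphs, and genuinely needed here) and the vertices of $\tilde{\Hr}$ have pairwise distinct degrees, the degree operator $D_{A'}$ is an intertwiner of the fundamental representation whose spectral projections are (after one extra argument separating two possibly coinciding eigenvalues) exactly the copy projections $\I\otimes P_k$; this forces the fundamental representation to block-diagonalise as $\bigoplus_k U_{(k)}$, and the off-diagonal blocks of $A'$ then force all $U_{(k)}$ to coincide and to commute with each $A_i$. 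Without regularity and the distinct-degree construction this step fails, so your gadget cannot be left as a black box: it is the theorem.
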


For the proof of the quantum Frucht's theorem, it is natural to try and find a way to quantise Frucht's original arguments.  However, two issues arise. Firstly, given a finite quantum group $\G$, there seems to be no straightforward way to ensure the existence of a single coloured quantum graph $\Gr_0$ whose quantum automorphism group is exactly $\G$.  Secondly, even if such a $\Gr_0$ could be found, Frucht's decolouring procedure involves the addition of edges to $\Gr_0$ -- a process which seems quite hard to effectively quantise. Instead, we develop a new approach to Frucht's theorem which is much more amenable to quantisation of both groups and graphs.  We present the main proof of the quantum Frucht's theorem in Sections~\ref{sec.step1}--\ref{sec.step2}. As a first step, given any finite quantum group $\G$, we construct in Proposition \ref{P.quantum1} a finite set of quantum graphs $\{\Gr_i\}$, for each of which the quantum vertex set is $\G$ and such that $\G$ is the intersection of their quantum automorphism groups. As a second step, we establish Theorem~\ref{T.q22}, which provides a general procedure that allows one  to combine the family $\{\Gr_i\}$ into a single quantum graph $\Gr$ over a 
larger quantum vertex set which still has $\G$ as its quantum automorphism group. 

We remark here that there has been some recent partial progress towards proving a quantum Frucht's Theorem in other works. 
 For example, in \cite{DBRS24} it was shown that every classical finite group can be realized as the quantum automorphism group of a classical graph (the authors call this a ``weak quantum Frucht's theorem''). 
 The key idea in their approach was to check that each step of Frucht's original decolouration procedure does not introduce any new {\it quantum} symmetries.  In another direction, the work \cite{BH24} shows that certain finite quantum groupoids can be realized as quantum automorphism groupoids of ``categorified graphs''. The latter are certain variations of quantum graphs modelled over finite index subfactors.

In both the classical and quantum Frucht's theorems, a key role is played by (quantum) Cayley graphs and their symmetries.  Indeed, these are the natural objects which support quantum group symmetry.  An old problem in group theory, dating back to work of Watkins \cite{Wat76} (see also \cite{LdlS}), is to classify the finite (or infinite) groups $G$ which admit a Cayley graph $\Cay(G,S)$ with few automorphisms, meaning that $\Aut(\Cay(G,S)) = G$. Such Cayley graphs are also called {\it rigid}.  In Section \ref{sec:rigid-calyey}, we initiate the study of this problem for finite quantum groups, and establish some partial results there.  Most notably, we settle the problem completely for all duals of finite groups. 

 \begin{thmx}[Theorem \ref{Thm:cayleyaut}]
Let $\Gamma$ be a non-abelian finite group.  Then the Pontryagin dual quantum group $\widehat \Gamma$ admits a rigid quantum Cayley graph $\Gr$. 
In other words, there exists a (directed) quantum Cayley graph structure $\Gr$ on $\widehat \Gamma$ such that $\widehat \Gamma = \Qut(\Gr)$. 
 \end{thmx}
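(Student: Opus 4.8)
The plan is to parametrise directed quantum Cayley graphs on $\widehat\Gamma$ explicitly, and then isolate a single spectral condition that forces rigidity. Recall that $C(\widehat\Gamma)=\C[\Gamma]=\bigoplus_{\pi\in\Irr(\Gamma)}M_{d_\pi}(\C)$, with comultiplication $\Delta(\lambda_g)=\lambda_g\otimes\lambda_g$ and Haar state for which $\{\lambda_g\}_{g\in\Gamma}$ is orthonormal. A translation-invariant quantum adjacency matrix is then precisely a \emph{Fourier multiplier} $A_\psi(\lambda_g)=\psi(g)\lambda_g$ attached to a symbol $\psi\colon\Gamma\to\C$; the Schur-idempotent axiom $A_\psi\bullet A_\psi=A_\psi$ amounts to $\psi*\psi=\psi$, i.e. to the requirement that $e=\sum_g\psi(g)\lambda_g$ be an idempotent of the group algebra $\C[\Gamma]$. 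Since $(A_\psi\otimes\id)\Delta(\lambda_g)=\psi(g)\,\lambda_g\otimes\lambda_g=\Delta A_\psi(\lambda_g)$, the comultiplication intertwines every such $A_\psi$, so $\widehat\Gamma\subseteq\Qut(\Gr_e)$ automatically. Thus the whole content of the theorem is to produce one idempotent $e$ for which this inclusion is an equality.

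First I would establish a spectral rigidity lemma: if the symbol $\psi$ is \emph{injective} on $\Gamma$ (equivalently, $A_\psi$ has simple spectrum), then $\Qut(\Gr_e)=\widehat\Gamma$. Let $u\colon\C[\Gamma]\to\C[\Gamma]\otimes C(\Qut(\Gr_e))$ be the universal coaction; it is a unital $*$-homomorphism intertwining $A_\psi$, hence it preserves the eigenspaces of $A_\psi\otimes\id$. Because the eigenvalues $\psi(g)$ are pairwise distinct, the $\psi(g)$-eigenspace is exactly the line $\C\lambda_g\otimes C(\Qut(\Gr_e))$, which forces $u(\lambda_g)=\lambda_g\otimes u_g$ for some $u_g\in C(\Qut(\Gr_e))$. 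Feeding this back into the coaction axioms is then routine: multiplicativity gives $u_gu_h=u_{gh}$, unitality gives $u_e=1$, and the $*$-structure gives $u_g^*=u_{g^{-1}}$. Hence the $u_g$ form a unitary representation of $\Gamma$ generating $C(\Qut(\Gr_e))$, exhibiting the latter as a quotient of $\Cst(\Gamma)=C(\widehat\Gamma)$; together with the reverse surjection coming from $\widehat\Gamma\subseteq\Qut(\Gr_e)$, finite-dimensionality forces an isomorphism, i.e. $\Qut(\Gr_e)=\widehat\Gamma$.

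It remains to construct an idempotent $e\in\C[\Gamma]$ with injective symbol, and this is where both hypotheses enter. Working block by block, I would take $e=\bigoplus_{\pi}P_\pi$ for idempotents $P_\pi\in M_{d_\pi}(\C)$; such an $e$ is automatically idempotent, and its symbol is $\psi(g)=\sum_\pi\tfrac{d_\pi}{|\Gamma|}\Tr\!\big(P_\pi\,\pi(g^{-1})\big)$, a linear combination of matrix coefficients of the irreducible representations. Since the regular representation is faithful, these matrix coefficients separate the points of $\Gamma$, so $\psi(g)-\psi(h)$ is a nonzero linear function of the parameters $(P_\pi)$ for each $g\neq h$; a \emph{generic} choice of the $P_\pi$ therefore avoids the finitely many proper subvarieties $\{\psi(g)=\psi(h)\}$ and yields an injective $\psi$. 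Non-abelianness is used twice: it guarantees a block with $d_\pi\geq2$, hence a positive-dimensional family of idempotents on which to run the genericity argument (for abelian $\Gamma$ only the finitely many central idempotents exist, and this freedom is absent). Moreover, passing to \emph{directed} graphs — allowing $e$ to be non-self-adjoint — is what makes injectivity possible at all: a self-adjoint symbol satisfies $\psi(g^{-1})=\overline{\psi(g)}$, so an undirected real symbol would force $\psi(g)=\psi(g^{-1})$, reproducing the classical ``inversion automorphism'' obstruction, which for non-abelian $\Gamma$ is no longer an actual symmetry. The main obstacle of the proof is precisely this construction step: verifying that the idempotency constraint and the simple-spectrum requirement are simultaneously satisfiable, which reduces to checking that the separating matrix coefficients are independent enough to keep each exceptional locus proper on a suitable stratum of block idempotents.
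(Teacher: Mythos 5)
Your spectral rigidity lemma is sound and matches the paper's starting point: commutation of the coaction with the multiplier $A_\psi$ forces $a_{g,h}=0$ whenever $\psi(g)\neq\psi(h)$, so an injective symbol immediately yields $\Qut(\Gr)=\widehat\Gamma$. The gap is in the construction step, and it is exactly the case the paper is forced to treat separately: pairs of \emph{distinct central} elements. Your genericity argument says that $\psi(g)-\psi(h)$ is a nonzero linear functional of the parameters $(P_\pi)$, hence vanishes only on a proper subvariety. But the parameters do not range over the full matrix blocks; they range over idempotents, and on each component of that variety the ranks $\Tr(P_\pi)$ are constant. If $g$ and $h$ are central, then $\pi(g^{-1})$ is the scalar $\omega_\pi(g^{-1})$, so $\Tr\bigl(P_\pi\pi(g^{-1})\bigr)=\omega_\pi(g^{-1})\Tr(P_\pi)$ and $\psi(g)-\psi(h)=\sum_\pi\tfrac{d_\pi\,r_\pi}{|\Gamma|}\bigl(\omega_\pi(g^{-1})-\omega_\pi(h^{-1})\bigr)$ is \emph{constant} on the parameter space. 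The exceptional locus $\{\psi(g)=\psi(h)\}$ is then either empty or everything, genericity buys nothing, and whether it is empty is a discrete condition on the rank vector $(r_\pi)$ that you neither verify nor can verify uniformly (e.g.\ for the Heisenberg group over $\mathbb{F}_3$ and $g,h$ the two nontrivial central elements, taking rank-one projections in both $3$-dimensional blocks gives $\psi(g)=\psi(h)$ identically). The paper proves precisely that the locus is proper when at least one of $g,h$ is non-central, and then abandons injectivity of the symbol altogether for the central--central case: it uses the multiplicativity relation $a_{g,h_1h_2}=\sum_{g_1}a_{g_1,h_1}a_{g_1^{-1}g,h_2}$ with $h_1=t$ a non-central element and $h_2=t^{-1}h$ to conclude $a_{g,h}=0$ for distinct central $g,h$. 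Some argument of this kind (or a genuine proof that a rank vector separating the center always exists) is needed to close your proof; as written, the claim that a generic $e$ has injective symbol is false in general.

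A secondary but real problem is your appeal to non-self-adjoint idempotents. In the paper's framework a quantum adjacency matrix must satisfy $A=\bar A$ as well as $A\bullet A=A$; for a Fourier multiplier this forces $\psi(g)=\overline{\psi(g^{-1})}$, i.e.\ $P=P^*$, and Definition \ref{def:Qcayley} requires an orthogonal projection outright. ``Directed'' refers to $A\neq A^*$ (equivalently $P\neq S(P)$), not to dropping the reality condition, so a non-self-adjoint idempotent does not define a quantum Cayley graph at all. Fortunately you do not need it: $\psi(g^{-1})=\overline{\psi(g)}$ is compatible with $\psi(g)\neq\psi(g^{-1})$ as long as $\psi(g)\notin\R$, and the paper's construction uses genuine orthogonal projections throughout.
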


Applying the above result to finite perfect groups (i.e.\ those with trivial abelian quotients), we are able to easily generate many examples of quantum graphs with small classical automorphism group, and large quantum automorphism group.

 \begin{thmx}[Theorem \ref{thm:trivquat}]
 Let $\G= \widehat{\Gamma}$, where $\Gamma$ is a finite perfect group. Then there exists a quantum (Cayley) graph structure on $\G$ such that the classical automorphism group is trivial, but the quantum automorphism group acts transitively.
 \end{thmx}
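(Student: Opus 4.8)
The plan is to let $\Gr$ be \emph{exactly} the rigid quantum Cayley graph produced by Theorem~\ref{Thm:cayleyaut}, so that essentially all the hard work is already packaged there; what remains is to read off the two advertised properties. First I would check that the hypothesis of Theorem~\ref{Thm:cayleyaut} is met: a nontrivial finite perfect group $\Gamma$ satisfies $[\Gamma,\Gamma]=\Gamma$, so its abelianization $\Gamma^{\mathrm{ab}}=\Gamma/[\Gamma,\Gamma]$ is trivial, and were $\Gamma$ abelian this would force $\Gamma$ itself to be trivial. Hence $\Gamma$ is non-abelian, Theorem~\ref{Thm:cayleyaut} applies, and we obtain a (directed) quantum Cayley graph structure $\Gr$ on $\G=\widehat\Gamma$ with $\Qut(\Gr)=\widehat\Gamma$.

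For transitivity of the quantum automorphism group: being a quantum Cayley graph, $\Gr$ carries the canonical translation coaction of $\widehat\Gamma$ on its own quantum vertex set, namely the comultiplication $\Delta\colon \C[\Gamma]\to \C[\Gamma]\otimes\C[\Gamma]$, $g\mapsto g\otimes g$. I would argue this coaction is ergodic: if $x=\sum_g c_g\,g$ satisfies $\Delta(x)=x\otimes 1$, then comparing coefficients in the basis $\{g\otimes h\}$ forces $c_g=0$ for every $g\neq e$, so $x\in\C 1$. Thus the translation action is transitive, and since $\Qut(\Gr)=\widehat\Gamma$ contains these translations (indeed coincides with them here), the quantum automorphism group acts transitively.

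For triviality of the classical automorphism group: I would use that the classical automorphism group of $\Gr$ is the maximal classical (commutative) closed subgroup of $\Qut(\Gr)=\widehat\Gamma$, which corresponds to the maximal commutative Hopf $*$-quotient of $C(\widehat\Gamma)=\C[\Gamma]$. Abelianizing the group algebra $\C[\Gamma]$ collapses it onto $\C[\Gamma^{\mathrm{ab}}]=C(\widehat{\Gamma^{\mathrm{ab}}})$, so this subgroup is $\widehat{\Gamma^{\mathrm{ab}}}$. Since $\Gamma$ is perfect, $\Gamma^{\mathrm{ab}}$ is trivial, and therefore the classical automorphism group is trivial.

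The conceptual core of the statement is entirely inherited from Theorem~\ref{Thm:cayleyaut}; the only genuinely new points to pin down are (a) the identification of the classical automorphism group of a quantum graph with the maximal classical subgroup of its quantum automorphism group, which requires unwinding the definition of a classical symmetry as a character of $C(\Qut(\Gr))$, and (b) the matching of transitivity in the quantum-group sense with ergodicity of the translation coaction. Both should be routine within the framework developed earlier in the paper, so I expect the main obstacle to lie entirely upstream, in the proof of Theorem~\ref{Thm:cayleyaut} itself rather than in the present deduction.
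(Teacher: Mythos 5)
Your proposal is correct and follows essentially the same route as the paper: apply Theorem~\ref{Thm:cayleyaut} (after noting a nontrivial perfect group is non-abelian), get transitivity from the comultiplication coaction being ergodic, and get triviality of the classical automorphism group from the fact that the abelianisation of $\C[\Gamma]$ is $\C[\Gamma^{\mathrm{ab}}]=\C$ for perfect $\Gamma$. The paper phrases the last step as $\widehat\Gamma$ having no non-trivial classical (quantum sub)groups since the only commutative Hopf $*$-quotient of $\C[\Gamma]$ is $\C$, which is the same observation; your extra details (the coefficient computation for ergodicity and the identification of the classical automorphism group with the maximal commutative quotient) are consistent with the framework set up in the preliminaries.
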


The detailed plan of the paper is as follows: in Section \ref{sec:prelims}, we provide a brief introduction to quantum sets, graphs, and their quantum automorphism groups. Section \ref{sec.step1} is devoted to a brief introduction to (finite) quantum Cayley graphs and makes connections to earlier work on this topic due to Vergnioux \cite{Ver05}.  In Section \ref{sec:coloured-Frucht} we prove our coloured version of the quantum Frucht's theorem, and the decolouring and combining procedure is presented in Section \ref{sec.step2}.  The final Section \ref{sec:rigid-calyey} is devoted to the study of rigid quantum Cayley graphs, and also studies several concrete examples related to duals of symmetric groups.   
    
%

\section{Preliminaries} \label{sec:prelims}

In this section, we discuss preliminary facts regarding quantum sets, quantum graphs, and their quantum automorphism groups.

\subsection{Quantum sets and quantum graphs}



Classical and quantum finite graphs play a key role in our work. It will be convenient to present a uniform picture of both, so we begin with a rather non-standard approach to standard graphs.

Let $X$ be a finite set. When we equip it with the counting measure,  $\ell^2(X)$ is the Hilbert space with orthonormal basis $(e_x)_{x\in X}$ indexed by elements of $X$.
A \emph{(directed) graph} is then given by a pair $\Gr=(X,A)$ of a finite set $X$ of vertices and an adjacency matrix $A\colon \ell^2(X)\to \ell^2(X)$  whose entries are $0$ or $1$ (in the canonical basis). This condition is equivalent to $A\bullet A=A$, where $\bullet$ denotes the entrywise product of matrices, usually called the \emph{Schur product} or the \emph{Hadamard product}. The condition on the entries can sometimes be weakened, when weighted or coloured graphs are considered. Given $x,y \in X$ if $A_{yx}=1$, we say that there is an oriented edge $x\to y$ (note that most of the graph-theoretical literature uses a transposed convention). The graph is undirected if $A=A^*$, where ${}^*$ denotes the usual adjoint (the conjugate transpose). The graph admits no loops if the diagonal of $A$ is zero. If both these conditions are satisfied, the graph is called \emph{simple}.

If $G$ is a finite group, we denote by $\star$ the group operation extended to $\ell^2(G)$ linearly and call it the \emph{convolution}. 
 A Cayley graph of $G$ with respect to a set  $S\subseteq G$ is the graph $\Cay(G,S)=(G,A)$, where $A_{gh}=1$ if $g=kh$ for some $k\in S$ and zero otherwise. Equivalently, we can say that $Af=p \star f$ for any $f\in \ell^2(G)$, where $p=\sum_{k\in S} e_k$. Note that one usually assumes that $S$ is a generating subset of $G$.

The notion of quantum sets, sometimes called measured quantum sets, appeared for example in \cite{MRV18}. A \emph{finite quantum set} is a virtual object $\X$ associated to a finite-dimensional (possibly non-commutative) C*-algebra $C(\X)$, which is interpreted as the algebra of functions over $\X$. Given a tracial faithful positive functional $\psi$ on $C(\X)$, we may define an inner product $\langle x,y\rangle=\psi(x^*y)$, $x,y \in C(\X)$. Denote by  $m\colon C(\X)\otimes C(\X)\to C(\X)$  the multiplication map. It is well-known that there is a unique tracial positive functional $\psi:C(\X) \to \C$ such that considering the associated Hilbert space structure, we have $mm^*=\id$ (in other words, $C(\X)$ admits a unique \emph{tracial $1$-form}). We will always consider $\X$ with the tracial $1$-form $\psi$ and the associated inner product, and denote the particular Hilbert space by $\ell^2(\X)$. The multiplication in $C(\X)$ will be denoted $\cdot$, the involution $^*$ and the unit $1_{C(\X)}$ will be written as $\eta\in \ell^2(\X)$ when we want to interpret it as a Hilbert space vector. For any operators $A,B\colon \ell^2(\X)\to \ell^2(\X)$, we will define their \emph{Schur product} by the formula  $A\bullet B=m(A\otimes B)m^*$   and denote by $\bar A$ the operator acting by $\bar Af=(A( f^*))^*$ for any $f\in \ell^2(\X)$, calling it simply the \emph{complex conjugate} of $A$. We will save the $*$-symbol for denoting the adjoint operation in a C$^\ast$-algebra.  (In particular the $\ast$-operation on $\ell^2(\X)$ is simply the adjoint operation inherited from $C(\X) \cong \ell^2(\X)$). We will occasionally use diagrams to illustrate some complicated equations. For that purpose, we will use the following notation for the multiplication and unit: $m=\spider{2/1}$, $m^*=\spider{1/2}$, $\eta=\spider{0/1}$, $\spider{2/0}=\Diagram{\Dmor{bcirc}2/1 (1,0.5) \Dmor{bcirc}0/0 (1,1)}=\psi\circ m$. We can use this notation to express the Schur product as follows:
$$A\bullet B=\Diagram{\Dmor{bcirc}[-/-0-] (1,-1)\Dmor{bcirc}[-0-/-] (1,2) \DMor{square}1/1 (0,0.5) {$A$} \DMor {square}1/1 (2,0.5) {$B$}}.$$

A key example of a finite quantum set of importance for our paper is given by a \emph{finite quantum group}, i.e.\ a virtual object $\G$ associated to a finite-dimensional CQG Hopf *-algebra (so in particular a $\Cst$-algebra) denoted $C(\G)$ or $\cO(\G)$. Finite quantum groups are a subclass of compact quantum groups, which will be discussed below. The bi-invariant, with respect to the comultiplication, positive-functional of $\G$, the so-called \emph{Haar functional} $h$, is, when properly normalized, the tracial $1$-form on $C(\G)$, making $(C(\G), h)$ a (measured) finite quantum set. Indeed, by \cite[Proposition A 2.2]{Wor87} the Haar functional on $C(\G)$ is given by $\bigoplus_{\alpha} n_{\alpha} \op{Tr}_{n_{\alpha}}$, where $C(\G)\simeq \bigoplus_{\alpha} M_{n_{\alpha}}$.


For the history and motivation behind the notion of a quantum graph we refer to the articles \cite{Was24} and \cite{Daw24}, where also alternative equivalent approaches are described. We will mostly focus on the setup described via quantum adjacency matrices. Thus a \emph{(directed) quantum graph} is given by a pair $\Gr=(\X,A)$, where $\X$ is a finite quantum set and $A\colon \ell^2(\X)\to \ell^2(\X)$, the \emph{quantum adjacency matrix}, is a real Schur idempotent, i.e.\ it satisfies the equalities $A\bullet A=A=\bar A$. The quantum graph is called \emph{undirected} if $A=A^*$,  and it is said to have \emph{no loops} if $A\bullet \I=0$, where $\I$ stands for the identity. Note that if $A=\bar{A}$ then (as we work with a tracial reference functional) we always have $A^* = \bar{A^*}$.
Several examples will appear below; for now just note that the edgeless quantum graph on $\X$ is given by the map $A = 0$, and the complete quantum graph (without loops) by the map $Af = \psi(f) \eta-f,$ $f \in C(\X)$.

Given a directed quantum graph as above we define its \emph{(in)degree spectrum} to be the spectrum of the operator $D_A:=m(A\eta\otimes\I_{\ell^2(\X)}):\ell^2(\X) \to \ell^2(\X)$.  Likewise, we define the \emph{outdegree spectrum} via the spectrum of the operator $D^A=m(A^*\eta\otimes\I_{\ell^2(\X)})$. The eigenspace of $D_A$ corresponding to an eigenvalue $d\in \C$ is interpreted as functions supported on vertices of (in)degree $d$. The operators $D_A$ and $D^A$ clearly coincide if $\Gr$ is undirected. Following \cite[Definition 2.24]{Mat22} we call a quantum graph $(\X,A)$ \emph{$d$-regular} if $A\eta=A^*\eta=d\eta$.

\begin{prop}
  Let $d \in \C$.  A quantum graph $\Gr=(\X,A)$ is $d$-regular if and only if $D_A=D^A=d\,\I_{\ell^2(\X)}$.
\end{prop}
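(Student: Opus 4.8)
The plan is to first unwind the definitions of the degree operators $D_A$ and $D^A$ and recognise each as a \emph{left multiplication operator} on $C(\X)$, after which the asserted equivalence collapses to an elementary algebraic observation about the unit $\eta = 1_{C(\X)}$.

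First I would observe that, reading $A\eta \in \ell^2(\X) \cong C(\X)$ as an algebra element, the operator $D_A = m(A\eta\otimes\I_{\ell^2(\X)})$ acts simply as $D_A f = (A\eta)\cdot f$ for every $f \in C(\X)$; in other words $D_A$ is left multiplication by $A\eta$. This is immediate from the definition of $m$ as the multiplication map together with the reading of $A\eta\otimes\I$ as ``insert the vector $A\eta$ into the first input of $m$ and feed the argument into the second''. The same computation gives that $D^A = m(A^*\eta\otimes\I_{\ell^2(\X)})$ is left multiplication by $A^*\eta$.

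Next I would record the key elementary fact: for any $a \in C(\X)$, left multiplication by $a$ equals $d\,\I_{\ell^2(\X)}$ if and only if $a = d\eta$. The forward implication follows by evaluating on $\eta = 1_{C(\X)}$, since then $a = a\cdot 1 = d\eta$; the converse is clear, as $a = d\cdot 1$ forces $a\cdot f = d f$ for all $f$. Combining this with the first step, the $d$-regularity condition $A\eta = A^*\eta = d\eta$ is equivalent to $A\eta$ and $A^*\eta$ both acting as $d\,\I$ under left multiplication, which is precisely $D_A = D^A = d\,\I_{\ell^2(\X)}$; this chain of equivalences settles both directions simultaneously.

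I do not expect any genuine obstacle here. The only point requiring care is the correct interpretation of the tensor expression defining $D_A$ --- specifically, that the state $A\eta$ enters as an algebra element multiplied against the input rather than as some other contraction --- so I would state this identification explicitly (and, if helpful, confirm it via the spider diagram for $m$) before invoking it, after which the remainder is purely formal.
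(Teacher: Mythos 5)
Your proof is correct and follows essentially the same route as the paper: both arguments recognise $D_A$ as left multiplication by $A\eta$, obtain the forward direction from $m(\eta\otimes\I)=\I$, and obtain the converse by evaluating $D_A$ at the unit $\eta$ to recover $A\eta=d\eta$. The only difference is presentational — you isolate the "left multiplication by $a$ equals $d\,\I$ iff $a=d\eta$" observation as an explicit intermediate lemma, whereas the paper performs the same two computations inline.
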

\begin{proof}If $(\X,A)$ is $d$-regular, then by definition $A\eta=d\eta$ and hence $D_A=m(A\eta\otimes\I_{\ell^2(\X)})=d\,m(\eta\otimes\I_{\ell^2(\X)})=d\,\I_{\ell^2(\X)}$. Similarly for $D^A$. On the other hand, if $D_A=d\,\I_{\ell^2(\X)}$, then $d\eta=D_A\eta=m(A\eta\otimes\eta)=A\eta$.
\end{proof}

\subsection{Quantum symmetry/automorphism groups}

In order to speak about quantum symmetries of (quantum) graphs, we actually need to introduce compact quantum groups because, in contrast with the classical case, even finite (quantum) graphs can admit infinitely many quantum symmetries. We will mostly consider an algebraic approach. A \emph{compact quantum group} in the sense of Woronowicz is a virtual object $\G$ associated to a \emph{CQG-Hopf} *-algebra $\cO(\G)$ (for a definition of a CQG-Hopf *-algebra see \cite{DiK94}). 
Practically all compact quantum groups studied in this paper will be \emph{compact matrix quantum groups}, i.e.\ will admit a unitary matrix $U=(u_{ij})_{i,j=1}^n \in M_n(\cO(\G))$ (for some $n \in \N)$ such that $U$ is a representation of $\G$, that is 
\[ \Delta(u_{ij}) = \sum_{k=1}^n u_{ik} \otimes u_{kj}, \;\;\; i, j=1, \ldots, n,\]
and the set of entries of $U$ generates $\cO(\G)$ as a $^*$-algebra. We will then call $U$ a \emph{fundamental} representation of $\G$.

Observe that any finite quantum group $\G$ as introduced in the previous subsection can be considered as a compact matrix quantum group. Indeed, the comultiplication can be interpreted as a map $\ell^2(\G)\to C(\G)\otimes \ell^2(\G)$, so as a matrix acting on $\ell^2(\G)$ with entries in $C(\G)$, i.e.\ as a fundamental representation of $\G$.

Given two compact quantum groups $\G$ and $\Hb$ we say that $\Hb$ is a \emph{(closed) quantum subgroup} of $\G$ if there exists a surjective Hopf $^*$-algebra morphism $\pi: \cO(\G)\to \cO(\Hb)$.
Similarly given matrix compact quantum groups $\G$ and $\Hb$ with fundamental representations of the same size, we define their \emph{intersection} $\G\cap\Hb$ to be given by the largest quotient $\cO(\G\cap\Hb)$ of both $\cO(\G)$ and $\cO(\Hb)$ mapping generators to generators. That is, $\cO(\G\cap\Hb)$ is the universal *-algebra generated by $u_{ij}$ subject to all the relations from both $\G$ and $\Hb$. One can check that this notion is compatible with the standard one, discussed for example
in \cite{CHK17}.

For a detailed description of actions of compact quantum groups we refer to \cite{DC17}. We will be solely concerned with actions on finite quantum spaces, where certain subtleties disappear and one can view the theory purely algebraically. So an \emph{action of a  compact quantum group $\Hb$ on a finite-dimensional $\Cst$-algebra} $C(\X)$ is an injective unital $^*$-homomorphism $\alpha: C(\X) \to C(\X) \otimes \cO(\Hb)$ satisfying the \emph{action equation}:
\[ (\alpha \circ \id_{\cO(\Hb)}) \alpha = (\id_{C(\X)} \otimes \Delta_{\Hb}) \circ \alpha.\]
The action $\alpha$ as above is said to be \emph{faithful} if $\cO(\Hb)$ is the Hopf $^*$-algebra generated by the set of slices $\{(\omega \otimes \id_{\cO(\Hb)})\circ \alpha(a): a \in C(\X), \omega \in C(\X)^*\}$. It is said to be \emph{ergodic} or \emph{transitive} if
the fixed point subalgebra is trivial: $\{x \in C(\X): \alpha(x) = x \otimes 1_{\cO(\Hb)}\} = \C 1_{C(\X)}$.

If $\X$ is a finite quantum set, so we have also a fixed positive faithful functional $\psi: C(\X) \to \C$, we say that the action as above \emph{preserves} $\psi$ if 
\[ (\psi \otimes \id_{\cO(\Hb)})\circ \alpha = \psi(\cdot) 1_{\cO(\Hb)}.\]
In \cite{Wan98} Wang showed that any finite quantum set $\X$ admits a \emph{quantum symmetry group} i.e.\ there exists a \emph{universal} compact quantum group $\Qut(\X, \psi)$ acting on $\X$ with the action preserving $\psi$; with the action automatically faithful. More information, in particular the meaning of universality can be found for example in \cite{ASS17}, but see also Proposition \ref{prop:Matt} below.

We will mention one specific example, which is relevant for what follows: Let $\G$ be a finite quantum group, and let $\psi$ be the suitable scalar multiple of the Haar state of $\G$ (so that $(C(\G), \psi)$ is a finite quantum set, as explained in the previous subsection). Then the coproduct $\Delta: C(\G) \to C(\G) \otimes \cO(\G)$ (note that we use on purpose two different notations for the same algebra, to highlight different roles it plays) defines an action of $\G$ on $C(\G)$. The fact that the action preserves $\psi$ is nothing but the invariance of the Haar state. As the coproduct action of $\G$ on $C(\G)$ is faithful, we can view $\G$ as a quantum subgroup of $\Qut(\G, \psi)$.

The following is the key definition for our paper -- for alternative versions and an extended discussion we refer to \cite[Section 9]{Daw24}. 

\begin{defn}
  Let $(\X, A)$  be a finite quantum graph. An action of a compact quantum group $\Hb$  on $(\X,A)$ is given by an action $\alpha$ of $\Hb$ on $C(\X)$ which preserves $\psi$ and the adjacency matrix $A$, where the last condition is understood via the commutation relation:
  \[ (A \otimes \id_{\cO(\Hb)}) \circ \alpha = \alpha \circ A \]
  (note that we view above $A$ as acting on $C(\X)$).
\end{defn}

The following result is now easy to deduce (see for example \cite[Proposition 9.9]{Daw24}).

\begin{prop} \label{prop:Matt}
  Let $(\X, A)$  be a finite quantum graph. Then there exists a universal compact quantum group $\Qut(\X, A)$ acting (faithfully) on $(\X,A)$ (via a map $\alpha: C(\X) \to C(\X) \otimes \cO(\Qut(\X, A))$); the universality means that for any other action $\beta:C(\X) \to C(\X) \otimes \cO(\Hb)$ of a compact quantum group $\Hb$ on $(\X,A)$ there exists a unique Hopf $^*$-algebra morphism $\gamma: \cO(\Qut(\X, A)) \to \cO(\Hb)$ such that 
  \[ \alpha = \beta \circ \gamma. \]
The \emph{quantum automorphism group} $\Qut(\X, A)$ is a quantum subgroup of $\Qut(\X)$.
\end{prop}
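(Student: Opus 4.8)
The plan is to realise $\Qut(\X,A)$ as a quotient of Wang's quantum symmetry group $\Qut(\X)$ by the relations that encode commutation with $A$. Write $\alpha_0\colon C(\X)\to C(\X)\otimes\cO(\Qut(\X))$ for the universal $\psi$-preserving action and let $U=(u_{ij})$ be its fundamental corepresentation, determined by $\alpha_0(e_i)=\sum_j e_j\otimes u_{ji}$ in a fixed basis $(e_i)$ of $C(\X)$. The first step is a dictionary lemma: for any $\psi$-preserving action $\beta$ with corepresentation $V$, the adjacency-commutation relation $(A\otimes\id_{\cO(\Hb)})\circ\beta=\beta\circ A$ holds if and only if $AV=VA$ as matrices over $\cO(\Hb)$. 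This is a direct computation comparing the coefficients of $e_l$ in $\beta(Ae_i)$ and $(A\otimes\id_{\cO(\Hb)})\beta(e_i)$. Applying this to the universal action identifies an $A$-preserving action with a morphism out of $\Qut(\X)$ whose image of $U$ commutes with $A$.

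Guided by this, I would define $\cO(\Qut(\X,A)):=\cO(\Qut(\X))/\mathcal I$, where $\mathcal I$ is the $*$-ideal generated by the entries $c_{ij}:=(AU-UA)_{ij}$. The crux is to verify that $\mathcal I$ is a Hopf $*$-ideal, so that the quotient is again a CQG-Hopf $*$-algebra and $q\colon\cO(\Qut(\X))\to\cO(\Qut(\X,A))$ is a surjective Hopf $*$-morphism, making $\Qut(\X,A)$ a quantum subgroup of $\Qut(\X)$, as required by the last sentence of the statement. The coideal property is the clean part: using $\Delta(u_{ij})=\sum_k u_{ik}\otimes u_{kj}$ one computes
\[
\Delta(c_{ij})=\sum_l c_{il}\otimes u_{lj}+\sum_l u_{il}\otimes c_{lj}\in\mathcal I\otimes\cO(\Qut(\X))+\cO(\Qut(\X))\otimes\mathcal I,
\]
after the two mixed terms $\sum_l(UA)_{il}\otimes u_{lj}$ and $\sum_l u_{il}\otimes(AU)_{lj}$ are seen to coincide and cancel. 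The counit kills the generators since $\epsilon(u_{ij})=\delta_{ij}$ gives $\epsilon(c_{ij})=(A-A)_{ij}=0$.

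The main obstacle I anticipate is the $*$- and antipode-invariance of $\mathcal I$, which is where the reality hypothesis $A=\bar A$ enters: for a directed graph $A$ need not be self-adjoint, so $c_{ij}^*$ a priori involves $A^*$ rather than $A$. Here I would use the interplay between the antipode $S$, the adjoint, and the complex conjugation $\bar{\,\cdot\,}$ on $\cO(\Qut(\X))$; combining $A=\bar A$ with the way $S$ acts on the matrix coefficients of the $\psi$-preserving corepresentation $U$, one checks that the $*$-conjugates and antipode-images of the $c_{ij}$ again lie in the ideal they generate. Alternatively, one avoids this computation entirely by invoking the intersection/universal construction for matrix quantum groups introduced above, since imposing an intertwiner relation $UA=AU$ on a compact matrix quantum group always yields a quantum subgroup.

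Finally, the universal property follows formally. The induced map $\alpha:=(\id_{C(\X)}\otimes q)\circ\alpha_0$ is a $\psi$-preserving action that commutes with $A$ by construction, hence an action on $(\X,A)$. Given any action $\beta\colon C(\X)\to C(\X)\otimes\cO(\Hb)$ on $(\X,A)$, universality of $\Qut(\X)$ produces a unique Hopf $*$-morphism $\gamma_0\colon\cO(\Qut(\X))\to\cO(\Hb)$ with $(\id_{C(\X)}\otimes\gamma_0)\circ\alpha_0=\beta$; since $\beta$ commutes with $A$, the dictionary lemma forces $\gamma_0(c_{ij})=0$, so $\gamma_0$ factors through $q$ as a unique $\gamma\colon\cO(\Qut(\X,A))\to\cO(\Hb)$ satisfying $(\id_{C(\X)}\otimes\gamma)\circ\alpha=\beta$. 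This yields the asserted universal property and completes the argument.
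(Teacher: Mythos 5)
Your proposal is correct and coincides with what the paper has in mind: the paper gives no written proof (it defers to \cite[Proposition 9.9]{Daw24}) and immediately records exactly your construction, namely the presentation of $\cO(\Qut(\X,A))$ as the quotient of $\cO(\Qut(\X))$ by the relation $UA=AU$ as in \eqref{eq.QAut}. You also correctly isolate the one genuinely delicate point — that the $*$- and antipode-invariance of the ideal (equivalently, that the quotient is again a CQG-Hopf $*$-algebra) needs the reality $A=\bar A$ together with the standard fact that imposing intertwiner relations on a compact matrix quantum group yields a quantum subgroup — so nothing essential is missing.
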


Note that some authors use the notation $\Aut^+(\X, A)$ instead of $\Qut(\X, A)$, and call it the \emph{quantum symmetry group} of $(\X,A)$

The last statement of the above proposition implies in particular that $\Qut(\X, A)$ is a compact matrix quantum group (as so is $\Qut(\X)$). Thus the definition above can be reinterpreted as follows (note that this approach was followed in \cite{BCEHPSW20} and in \cite{Gro22}): if $(e_i)_{i=1}^n$ is some orthogonal basis of $\ell^2(\X)$, $\cO(\Qut(\X, A))$ is the universal *-algebra generated by elements $\{u_{ij}:i,j =1, \ldots, n\}$ subject to relations
\begin{equation}
\label{eq.QAut}
\text{$U=(u_{ij})_{i,j=1}^n$ is unitary,}\quad m(U\otimes U)=Um,\quad U\eta=\eta,\quad UA=AU,
\end{equation}
where $U$ is treated as a matrix acting on $\ell^2(X)$ with entries in $\cO(\G)$ and the equalities above have to be interpreted accordingly. Diagrammatically,
\begin{equation}
\label{eq.QAutDiag}
\Diagram{\DMor{square}[-/-=] (0,1) {$U$} \DMor{square}[-/-=] (.5,-1) {$U^*$}\draw [very thick] (1,0) .. controls +(up:0.5) and +(down:0.5) .. (1.5,2); \Dmor{bcirc}[==/=] (1,2.5) \draw (-0.5,2) -- (-0.5,3);}=\Diagram{\draw (0,-2) -- (0,3); \Dmor{bcirc}[/=] (1,0.5) \draw [very thick] (1,1) -- (1,3);}=\Diagram{\DMor{square}[-/-=] (0,1) {$U^*$} \DMor{square}[-/-=] (.5,-1) {$U$}\draw [very thick] (1,0) .. controls +(up:0.5) and +(down:0.5) .. (1.5,2); \Dmor{bcirc}[==/=] (1,2.5) \draw (-0.5,2) -- (-0.5,3);},\qquad
\Diagram{\DMor{square}[-/-0=] (0,0) {$U$} \DMor{square}[-/-0=] (1,0) {$U$} \Dmor{bcirc}2/1 (-.5,1.5) \Dmor{bcirc}[==/=] (1.5,1.5)}
=\Diagram{\Dmor{bcirc}2/1 (0,-.5) \DMor{square}[-/-=] (0,1) {$U$}},\qquad
\Diagram{\Dmor{bcirc}0/1 (0,-.5) \DMor{square}[-/-=] (0,1) {$U$}}
=\Diagram{\Dmor{bcirc}0/1 (0,.5) \Dmor{bcirc}[/=] (1,.5)},\qquad
\Diagram{\DMor{square}1/1 (0,-.5) {$A$} \DMor{square}[-/-=] (0,1.5) {$U$}}
=\Diagram{\DMor{square}[-/-=] (0,-.5) {$U$} \DMor{square}1/1 (-.5,1.5) {$A$} \draw[very thick] (0.5,0.5) -- (0.5,2.5);},
\end{equation}
where thick lines denote the space $\cO(\G)$, whereas the thin lines denote the space $\ell^2(\X)$. 

Note that  the abelianisation of $\cO(\G)$ is simply the Hopf *-algebra $\cO(G)$, where $G = \Aut(\X,A)$ is the compact matrix group of 
{\it classical automorphisms} of $(\X,A)$. 
 These are precisely the *-automorphisms of $C(\X)$ that commute with $A$.
When $\Gr = (\X,A)$ is a classical finite graph, $\G = \Qut(\X,A)$ is the well-studied quantum automorphism group of the graph $\Gr$. In this case, the fundamental matrix $U = (u_{ij})_{i,j=1}^n$ is a magic unitary, and $\G$ is a quantum subgroup of the quantum permutation group $S_n^+$.  See \cite{BBC07} for a survey on these quantum groups.

It is easy to see that if $(\X,A)$ is a complete or edgeless quantum graph, then $\Qut(\X,A)= \Qut(\X)$ (the commutation relation with $A$ is then automatic). For more examples we refer to \cite{Gro22}.

\begin{remark} \label{rem:coloured}
Suppose that we are given a family $(\X,A_i)_{i \in I}$ of quantum graphs on the same finite quantum set $\X$. It is then easy to see that we can make sense of $\bigcap_{i \in I} \Qut(\X, A_i)$ (for example as a quantum subgroup of $\Qut(\X, \psi)$), and it is a universal compact quantum group acting on $C(\X)$ with the action preserving $\psi$ and each of the quantum adjacency matrices $A_i$. Classically this construction can be described as the study of symmetries of an \emph{edge-coloured} graph, with the adjacency matrix $A_i$ corresponding to a colour $i$; since we want the edges to be given a single colour, this requires an extra disjointness assumption about the $A_i$'s, namely that they satisfy $A_i\bullet A_j = \delta_{ij} A_i$. In the quantum setting we will be forced to work with more general families of quantum adjacency matrices, not necessarily ones consisting of mutually Schur-orthogonal maps. We will also comment in Subsection \ref{subsec:disjoint} on when we can work with genuine coloured quantum graphs. 
\end{remark}

Finally we mention an easy result which will be of use later. Whenever we have an action $\alpha: C(\X) \to C(\X)\otimes C(\mathbb{G})$, we get an induced map $\widetilde{\alpha}: \ell^{2}(\X) \to \ell^{2}(\X) \otimes C(\G)$, so all the usual notions for actions extend to the level of $\ell^{2}(\X)$\footnote{Since we are dealing with finite-dimensional objects, it is just a matter of identification. 
In the infinite-dimensional case, we would have to work with the Koopman representation, which is particularly nice when the action is trace-preserving.}.

\begin{prop}\label{prop:eigenvalues}
  Given a quantum graph $(\X, A)$, the eigenspaces of $D_A$ (and $D^A$) are invariant with respect to the action of $\Qut(\X, A)$. That is, the projections on the eigenspaces of $D_A$ (and $D^A$) are intertwiners of the fundamental representation $U$ of $\Qut(\X,A)$
\end{prop}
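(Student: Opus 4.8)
The plan is to show that both $D_A$ and $D^A$ are self-intertwiners of the fundamental representation $U$ of $\Qut(\X,A)$; the claim about eigenspaces then follows from a standard functional-calculus argument. Throughout I write $\tilde\alpha\colon \ell^2(\X)\to\ell^2(\X)\otimes\cO(\Qut(\X,A))$ for the map induced by the action, and freely use the defining relations \eqref{eq.QAut}: the matrix $U$ is unitary, $m(U\otimes U)=Um$ (multiplicativity of $\tilde\alpha$), $U\eta=\eta$, and $UA=AU$.

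First I would observe that $D_A$ is simply left multiplication on $C(\X)$ by the element $b:=A\eta$, since $D_A f=m(A\eta\otimes f)=(A\eta)\cdot f$ for $f\in\ell^2(\X)$; likewise $D^A$ is left multiplication by $A^*\eta$. The key point is that $b=A\eta$ is a \emph{fixed vector} of the action: combining $UA=AU$ with $U\eta=\eta$ gives $U(A\eta)=A(U\eta)=A\eta$, that is, $\tilde\alpha(b)=b\otimes 1$.

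Next I would promote this to a commutation relation between $D_A$ and $U$ using multiplicativity. For $f\in\ell^2(\X)$,
\[
\tilde\alpha(D_A f)=\tilde\alpha(b\cdot f)=\tilde\alpha(b)\,\tilde\alpha(f)=(b\otimes 1)\,\tilde\alpha(f)=(D_A\otimes\id)\,\tilde\alpha(f),
\]
where the final equality holds because left multiplication by $b\otimes 1$ in $C(\X)\otimes\cO(\Qut(\X,A))$ acts exactly as $D_A\otimes\id$. Hence $D_A$ commutes with $U$, i.e.\ $D_A\in\op{End}(U)$. For $D^A$ the same computation applies once $A^*\eta$ is known to be fixed, which follows from the fact that $A^*$ is again an intertwiner: writing $UA=AU$ as $(A\otimes 1)V=V(A\otimes 1)$ for the unitary $V\in B(\ell^2(\X))\otimes\cO(\Qut(\X,A))$ implementing $U$, taking adjoints and using $V^*=V^{-1}$ yields $(A^*\otimes 1)V=V(A^*\otimes 1)$, whence $U(A^*\eta)=A^*(U\eta)=A^*\eta$ and $D^A\in\op{End}(U)$.

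Finally, since $D_A$ (resp.\ $D^A$) lies in the intertwiner algebra $\op{End}(U)$, so does every element of the unital algebra it generates, and in particular every Riesz spectral projection onto a (generalised) eigenspace obtained from $D_A$ by holomorphic functional calculus; in the undirected case $D_A$ is moreover self-adjoint, so these are the usual orthogonal spectral projections. As $\op{End}(U)$ is closed under this calculus in finite dimensions, the projection $P_\lambda$ onto the eigenspace for an eigenvalue $\lambda$ satisfies $(P_\lambda\otimes\id)\tilde\alpha=\tilde\alpha P_\lambda$, so each eigenspace is invariant. I expect the only genuinely non-routine point to be the verification that $A^*$ (hence $D^A$) intertwines $U$, which is precisely where the unitarity of $U$ is used; the rest is a direct computation.
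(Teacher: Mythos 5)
Your proof is correct and follows essentially the same route as the paper's: the paper simply notes that $m$, $\eta$, and $A$ are intertwiners of $U$ and that intertwiners form a monoidal $*$-category, so $D_A=m(A\eta\otimes \I)$ is an intertwiner and its spectral projections (polynomials in $D_A$) are too. Your argument unpacks exactly this into an explicit computation (identifying $D_A$ as left multiplication by the fixed vector $A\eta$) and, commendably, makes explicit the adjoint step showing $A^*\in\Hom(U,U)$ needed for $D^A$, which the paper leaves implicit in the $*$-category structure.
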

\begin{proof}
It is well known that representations and their intertwiners of compact quantum groups form a monoidal $*$-category (like for ordinary compact groups). Denote by $\Hom(U,V)$ the space of intertwiners between $U$ and $V$. By definition of $\Qut(\X,A)$, we have $m\in\Hom(U\otimes U,U)$, $\eta\in\Hom(1,U)$, $A\in\Hom(U,U)$, where $U$ is the fundamental representation of $\Qut(\X,A)$. Consequently, $D_A=m(A\eta\otimes I_{l^2(\X)})\in\Hom(U,U)$. The projections on the eigenspaces of $D_A$ are polynomials in $D_A$, so they also have to be in $\Hom(U,U)$.
\end{proof}

\section{Quantum Cayley graphs}
\label{sec.step1}

We will now briefly discuss quantum Cayley graphs, as defined and studied in \cite{Was24} (in a broader context of arbitrary, possibly infinite, discrete quantum groups).  
Let $\G$ be a finite quantum group, as introduced in the previous section. 
The Hopf $^*$-algebra $C(\G)$ is, apart from the usual algebraic multiplication, equipped also with a convolution product (essentially transported from the dual quantum group via the Fourier transform). It will be easier to work directly with convolution operators attached to functionals on $C(\G)$. Given $\omega \in C(\G)^*$ we can define 
\[ L_\omega = (\omega \otimes \id_{C(\G)}) \circ \Delta: C(\G) \to C(\G),\]
\[ R_\omega = (\id_{C(\G)} \otimes \omega) \circ \Delta: C(\G) \to C(\G).\]
Given an element $x \in C(\G)$ we define a functional $\psi_x: C(\G) \to \C$, $\psi_x(y) = \psi(xy)$ for $y \in C(\G)$, where $\psi$ is the Haar functional of $\G$, normalised to be a $1$-form. The convolution of two elements $x_1, x_2 \in C(\G)$ is defined as the density with respect to $\psi$ of the functional $\psi_{x_1}\ast \psi_{x_2}$. It can be written more explicitly as $L_{\psi_{S(x_1)}}(x_2)$, where $S$ is the antipode on $C(\G)$. We define the convolution operator $x \star: = L_{\psi_{S(x)}}$.

\begin{remark}
 In the case of finite quantum groups, $\psi$ is tracial \cite{VDa97} and one can simply describe the convolution of $x_1,x_2 \in C(\G)$ as $x_1 \star x_2=\ol{\Delta^*} (x_1 \otimes x_2)=\Delta^* (x_1 \otimes x_2)$ since it holds for arbitrary $y\in C(\G)$ that
\begin{align*}
 & \langle (x_1 \star x_2)^*, y \rangle
 = \psi_{x_1 \star x_2}(y)
 = (\psi_{x_1} \ast \psi_{x_2})(y)
 = \psi^{\otimes2}((x_1 \otimes x_2)(\Delta y))
 \\
 &= \langle x_1^* \otimes x_2^*, \Delta y \rangle
 = \langle \Delta^* (x_1^* \otimes x_2^*), y \rangle
 = \langle (\ol{\Delta^*}(x_1 \otimes x_2))^*, y\rangle
 = \langle (\Delta^*(x_1 \otimes x_2))^*, y\rangle,
\end{align*} 
where the traciality of $\psi$ ensures (by \cite[Proposition 2.15]{Mat22}) that $\Delta^*$ is real if and only if $\Delta$ is so, while the latter is one of the defining properties of $\Delta$. The other equality $\ol{\Delta^*} (x \otimes \id) = L_{\psi_{S(x)}}$ follows from
\[
\ol{\Delta^*}=(\psi \circ m \circ\Sigma \otimes \id_{C(\G)})(S^{-1} \otimes \Delta), 
\]
where $\Sigma$ is the tensor flip.
One can check this equality by precomposing the mutual inverse maps 
$(m \otimes \id)(\id \otimes \Delta)$ and 
$(m \otimes \id)(\id \otimes S \otimes \id)(\id \otimes \Delta)$ to the identity of conjugate and transposed adjoint: 
\[
\ol{\Delta^*}=(\eta_{\G^2}^* m_{\G^2} \otimes \id_\G)
(\id_{\G^2} \otimes \Delta \otimes \id_{\G})
(\id_{\G^2} \otimes m_\G^*\eta_\G),
\]
where $C(\G^2)=C(\G)^{\otimes2}$ with $m_{\G^2}=(m_\G \otimes m_\G)(\id_\G \otimes \Sigma \otimes \id_\G)$. 
Diagrammatically, we have 
\begin{equation*}
\Diagram{\DMor{square}1/2 (1,0) {$\Delta$} 
 \draw (-0.5,-1) -- (-0.5,1);
 \Dmor{bcirc}2/1 (0,1.5) 
 \draw (1.5,1) to[out=90,in=-90] (1,2); 
 \DMor{square}2/1 (.5,3) {$\ol{\Delta^*}$} ;}
=\Diagram{\Dmor{bcirc}0/2 (2,-1.5)
 \DMor{square}1/2 (0,0) {$\Delta$}
 \DMor{square}1/2 (1.5,0) {$\Delta$} 
 \draw (-.5,1)to(-.5,2)(0.5,1)to[out=90,in=-90](1,2)
 (1,1)to[out=90,in=-90](.5,2)(2,1)to(2,2);
 \Dmor{bcirc}2/1 (0,2.5) 
 \Dmor{bcirc}2/0 (1.5,2.5)
 \Dmor{bcirc}2/0 (-0.5,3.5)
 \draw (-1,-1)--(-1,3) (2.5,-1)--(2.5,3);}
=\Diagram{\Dmor{bcirc}0/2 (1.5,-1)
 \DMor{square}1/2 (0.5,1.5) {$\Delta$}
 \Dmor{bcirc}2/1 (0.5,0)
 \Dmor{bcirc}2/0 (-0.5,3)
 \Dmor{bcirc}1/0 (1,3)
 \draw (-1,-.5)--(-1,2.5) (2,-0.5)--(2,3);}
=\Diagram{\Dmor{bcirc}1/0 (0,.5)
 \draw (0,-1)--(0,0)(1,-1)--(1,2);}
=\psi \otimes \id,
\end{equation*}
hence precomposition by $(m \otimes \id)(\id \otimes S \otimes \id)(\id \otimes \Delta)$ yields 
\[
\ol{\Delta^*}=(\psi m (\id \otimes S) \otimes \id)(\id \otimes \Delta)
=(\psi S m \Sigma (S^{-1} \otimes \id) \otimes \id)(\id \otimes \Delta)
=(\psi m \Sigma \otimes \id)(S^{-1} \otimes \Delta).
\]
Since $\psi$ is tracial, i.e., $\G$ is of Kac type, $S^2=\id$ and $\psi m\Sigma=\psi m$ holds, thus
$\ol{\Delta^*}(x\otimes y)=(\psi m \otimes \id)(S(x) \otimes \Delta(y))=L_{\psi_{S(x)}}y$.
\end{remark}

\begin{defn}\cite{Was24} \label{def:Qcayley}
    Let $\G$ be a finite quantum group. A quantum Cayley graph on $\G$ is given by the adjacency matrix of the form $Ax = P \star x$ ($x \in C(\G)$), where $P \in C(\G)$ is an orthogonal projection.
\end{defn}

The fact that $P$ being a self-adjoint projection is equivalent to the convolution operator given by $P$ yielding the adjacency matrix of a quantum graph on $\G$ is proved in \cite[Section 4.2]{Was24}. Note that \cite{Was24} often imposes also additional requirements that $\epsilon(P) = 0$ (equivalent to the quantum graph having no loops) and $P= S(P)$ (equivalent to the quantum graph being undirected). We will however need to consider also directed quantum graphs.

The analogy with the classical case is stronger if we in addition require that the projection $P$ defining our quantum Cayley graph is \emph{central}. Note that central projections in $C(\G)$ are in one-to-one correspondence with subsets of $\textup{Irr} (\hat{\G})$, the set of equivalence classes of irreducible representations of the dual quantum group. We also have a natural notion of such a subset being \emph{generating}, which we will mention after the next lemma; for more details and references we refer to \cite[Section 4]{Was24}, see also \cite{Fim10}.

The following easy observation is essentially a sanity check: given a quantum Cayley graph $\mathcal{G}$ associated to a finite quantum group $\G$ we expect that $\G$ would be acting on $\mathcal{G}$.

\begin{lem} \label{lem:G as a subgroup}
Let $\G$ be a finite quantum group and let $(\G, A)$ be a quantum Cayley graph. Then $\G$ is a quantum subgroup of $\Qut(\G,A)$: there exists a surjective Hopf $^*$-morphism $q:\cO(\Qut(\G,A)) \to \cO(\G)$ such that 
\[  (\id_{\cO(\G)} \otimes q) \circ \alpha_{\Qut(\X,A)} = \Delta_\G.  \]
\end{lem}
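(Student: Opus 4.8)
The plan is to recognise the comultiplication $\Delta_\G \colon C(\G) \to C(\G) \otimes \cO(\G)$ itself as an action of $\G$ on the quantum graph $(\G,A)$, and then to read off the desired map $q$ from the universal property of $\Qut(\G,A)$ recorded in Proposition~\ref{prop:Matt}. Concretely, I would apply universality to the action $\beta := \Delta_\G$ of the quantum group $\Hb := \G$: once I check that $\beta$ genuinely preserves both $\psi$ and $A$, it produces a unique Hopf $^*$-morphism $q \colon \cO(\Qut(\G,A)) \to \cO(\G)$ with $(\id_{C(\G)} \otimes q) \circ \alpha_{\Qut(\G,A)} = \Delta_\G$, which is exactly the claimed identity.

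First I would confirm the structural axioms, all of which are built into the Hopf $^*$-algebra structure of $\cO(\G)$: the coproduct is an injective (thanks to the counit, since $(\id \otimes \epsilon)\Delta = \id$) unital $^*$-homomorphism, coassociativity $(\Delta \otimes \id)\Delta = (\id \otimes \Delta)\Delta$ is precisely the action equation, and left-invariance of the Haar functional, $(\psi \otimes \id)\Delta = \psi(\cdot)\,1$, says that $\Delta$ preserves $\psi$. This is the content already signalled in Section~\ref{sec:prelims}, where the coproduct action was noted to be faithful and $\psi$-preserving.

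The crux is the commutation with the adjacency matrix, i.e.\ $(A \otimes \id_{\cO(\G)}) \circ \Delta = \Delta \circ A$. Writing the quantum Cayley adjacency matrix in convolution form, $Ax = P \star x = L_{\psi_{S(P)}}(x) = (\psi_{S(P)} \otimes \id_{C(\G)})\Delta(x)$, this commutation is the statement that a left convolution operator commutes with the right regular coaction $\Delta$. I expect it to reduce to coassociativity alone: in Sweedler notation both sides send $x$ to $\sum \psi_{S(P)}(x_{(1)})\, x_{(2)} \otimes x_{(3)}$, using $(\Delta \otimes \id)\Delta = (\id \otimes \Delta)\Delta$ to reassociate the three legs. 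I anticipate this to be the main, though still routine, computation; the only point demanding care is the bookkeeping of the antipode $S$ in $\psi_{S(P)}$ and the placement of the slicing functional on the correct (first) tensor leg. Notably, this argument uses only that $P$ is an element of $C(\G)$ and never requires $P$ to be central.

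Finally, universality delivers $q$ with the stated intertwining property. Surjectivity of $q$ would follow from faithfulness of the coproduct action: applying a slice $\omega \otimes \id$ to $(\id_{C(\G)} \otimes q)\circ \alpha_{\Qut(\G,A)} = \Delta_\G$ shows that $q$ carries the generating slices of $\alpha_{\Qut(\G,A)}$ onto the slices $(\omega \otimes \id)\Delta_\G(\,\cdot\,)$, and these generate $\cO(\G)$ precisely because the coproduct action is faithful. Hence $q$ is the required surjective Hopf $^*$-morphism, and $\G$ is a quantum subgroup of $\Qut(\G,A)$.
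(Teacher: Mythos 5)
Your proposal is correct and follows essentially the same route as the paper: the paper's proof likewise observes that it suffices to check the coproduct action commutes with the left convolution operator $L_\omega$ (for $\omega = \psi_{S(P)}$ in your notation) and notes this is an immediate consequence of coassociativity of $\Delta_\G$. Your Sweedler computation and the appeal to universality plus faithfulness simply spell out the details the paper leaves implicit.
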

\begin{proof}
Let $\omega \in C(\G)^*$. By the universal property of $\Qut(\G,A)$ and faithfulness of the action given by $\Delta_\G$ it suffices to check that the left convolution operator $L_\omega$ is preserved by the action of $\G$ given by the coproduct, i.e.\ that
 \[ (L_\omega \otimes \id_{\cO(\G)}) \circ \Delta_\G = \Delta_\G \circ L_\omega. \]
This is however an immediate consequence of the coassociativity of $\Delta_\G$.     
\end{proof}

We finish this section by relating the quantum Cayley graphs as described above to the quantum Cayley graphs appearing much earlier in work of Vergnioux \cite{Ver05}. Let $\G$ be a finite quantum group. The definition of Vergnioux takes as a starting point a finite set $S \subset \textup{Irr}(\widehat{\G})$ (assumed to be {\it symmetric} and \emph{generating} in the sense that $S = \bar S$ and all $\alpha \in \Irr(\widehat\G)$ appear as sub-objects of tensor products of elements of $S$).  As above, we consider the associated central orthogonal projection $P_S \in C(\G)$. Then \cite{Ver05} associates to $P_S$ a triple $(K,\mc S, \mc T)$, 
where $K = P_S C(\G) \otimes C(\G)$ is a quantum counterpart of the set of functions on the Cartesian product $S \times G$, and $\mc S, \mc T: C(\G) \to K$ are the {\it source} and {\it target} maps, respectively, given by 
\[
\mc S f = P_S \otimes f,  \qquad \mc T f = (P_S \otimes 1)\Delta(f) \qquad (f \in C(\G)).  
\]
In the classical case, these maps correspond to usual source and target maps $s,t:S \times G \to G$ via Gelfand duality.  Namely, \[(\mc S f)(s,g) = f(s(s,g)) = f(g)\] and \[\mc (Tf)(s,g) = f(t(s,g)) = f(sg), \;\;\; s \in S, g \in G,\]
The space $K$ comes equipped with a natural $C(G)$-bimodule structure given by 
\[
f \cdot (P_Sa \otimes  b)\cdot h := \Delta(f)(P_Sa \otimes b)(1 \otimes h) \qquad (f,h,a,b \in C(\G)).
\]
Classically, this corresponds to the actions
\[
(f\cdot \varphi \cdot h)(s,g) = f(sg)\varphi(s,g)h(g) \qquad (f,h \in C(G), \varphi \in K = C(G \times S), s \in S, g \in G). 
\]
We can promote $K$ to a C*-correspondence over $C(\G)$ if we equip it with the  $C(\G)$-valued innner product given by
\[\langle\xi| \eta \rangle_{C(\G)} = (\psi \otimes \id) (\xi^*\eta) \qquad (\xi,\eta \in K). \]

We now show that the above Vergnioux quantum Cayley graphs are in one-to-one correspondence with the quantum Cayley graphs $(\G, A)$, where $A= P_S \star (\cdot)$.  This is done by making use of the {\it edge correspondence}  introduced in \cite{BHINW23}. Given a quantum graph $(\X,A)$, one can associate the {\it quantum edge correspondence} $E_A = C(\X) \otimes_A C(\X)$\footnote{Actually, we are defining here a C*-correspondence which is isomorphic to the edge correspondence.  See \cite[Corollary 2.6]{BHINW23}}, which is the separation and completion of the standard $C(\X)$-bimodule $C(\X) \otimes C(\X)$ with respect to the $C(\X)$-valued inner product 
\[
\langle a \otimes b, c \otimes d\rangle_{C(\X)} = b^*A(a^*c)d, \;\; a,b,c,d \in C(\X).
\]
Note that from \cite[Theorem 2.5 and Corollary 2.6]{BHINW23}, it follows that the quantum edge correspondences $E_A$ and quantum graphs $(\X,A)$ are in one-to-one correspondence. The following result shows that the Vergnioux correspondence $K$ and $E_A$ are canonically isomorphic.    
\begin{thm}\label{correspondence-isomorphism}
Let $\G$ be a finite quantum group, and let $S\subset  \textup{Irr}(\widehat{\G})$ be a symmetric generating set. Let $(C(\G), A)$ be the quantum Cayley graph associated to $P_S$, let $E_A$ be its quantum edge correspondence and let $K$ be the $\Cst$-correspondence in the Vergnioux triple $(K,\mc S, \mc T)$ determined by $P_S$.
The linear map $\Phi:E_A \to K$ given by a naturally interpreted formula
\[
\Phi(a\otimes b) = \Delta(a)(1 \otimes b)(P_S \otimes 1), \;\; a, b \in C(\G),
\]
is an isomorphism of $\Cst$-correspondences. Hence finite quantum Cayley graphs of \cite{Ver05} are in 1-1 correspondence to finite central undirected quantum Cayley graphs of \cite{Was24}.
\end{thm}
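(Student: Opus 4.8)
The plan is to verify directly that $\Phi$ is an isometric bimodule map and that it is surjective; in finite dimensions these three properties together force $\Phi$ to be an isomorphism of $\Cst$-correspondences, since isometry annihilates precisely the degenerate vectors defining the separation $E_A$ and hence makes $\Phi$ descend to an injective map out of $E_A$. I would organise the argument around the single inner-product identity, which I expect to be the crux.

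The heart of the matter is the compatibility of $C(\G)$-valued inner products. Writing $\Delta(a) = \sum a_{(1)} \otimes a_{(2)}$ and using that $\Delta$ is a $*$-homomorphism together with $P_S = P_S^*$, I would first compute
\[
\Phi(a\otimes b)^*\Phi(c\otimes d) = (P_S \otimes 1)(1\otimes b^*)\Delta(a^*c)(1\otimes d)(P_S\otimes 1),
\]
then apply $(\psi \otimes \id)$ and pull the factors $b^*,d$ (which live in the second leg and commute with $P_S\otimes 1$) out of the $\id$-slot, obtaining
\[
\langle \Phi(a\otimes b)\mid \Phi(c\otimes d)\rangle_{C(\G)} = b^*\,(\psi\otimes\id)\big[(P_S\otimes 1)\Delta(a^*c)(P_S\otimes 1)\big]\,d.
\]
The inner bracket equals $\sum \psi\big(P_S (a^*c)_{(1)} P_S\big)(a^*c)_{(2)}$, and here two structural inputs enter in the step I expect to require the most care. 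Traciality of $\psi$ (that is, $\G$ of Kac type) collapses $\psi(P_S w_{(1)} P_S) = \psi(P_S w_{(1)})$ since $P_S^2 = P_S$, while the symmetry of $S$, equivalently $S(P_S) = P_S$ (this is exactly the undirectedness of the Cayley graph, cf.\ the equivalence $P = S(P)$ for undirected graphs), lets me rewrite $\psi(P_S\,\cdot\,) = \psi_{S(P_S)}$ and recognise the bracket as $L_{\psi_{S(P_S)}}(a^*c) = P_S \star (a^*c) = A(a^*c)$. Thus the whole expression equals $b^*A(a^*c)d$, which is precisely the edge-correspondence inner product $\langle a\otimes b, c\otimes d\rangle_{C(\G)}$. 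This one identity simultaneously shows that $\Phi$ descends to the separation $E_A$ and that it is isometric, hence injective.

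The bimodule property is then routine: multiplicativity of $\Delta$ gives $\Phi(fa \otimes b) = \Delta(f)\Phi(a\otimes b) = f\cdot \Phi(a\otimes b)$, and since $1\otimes h$ commutes with $P_S\otimes 1$ we get $\Phi(a\otimes bh) = \Phi(a\otimes b)(1\otimes h) = \Phi(a\otimes b)\cdot h$, matching the left and right actions on $K$. For surjectivity I would note that in Sweedler notation $\Phi(a\otimes b) = \sum a_{(1)}P_S \otimes a_{(2)}b$, so as a linear map on $C(\G)\otimes C(\G)$ it factors as $\Phi = (\rho_{P_S}\otimes\id)\circ\gamma$, where $\gamma(a\otimes b) = \Delta(a)(1\otimes b)$ is the bijective Galois/fusion map and $\rho_{P_S}$ is right multiplication by $P_S$. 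Since $\gamma$ is a linear bijection and $\rho_{P_S}\otimes\id$ surjects $C(\G)\otimes C(\G)$ onto $C(\G)P_S\otimes C(\G) = K$ (using centrality of $P_S$), the map $\Phi$ is surjective onto $K$, and composing with the quotient $C(\G)\otimes C(\G)\to E_A$ shows the induced map $E_A\to K$ is surjective as well. Combined with the injectivity above, the induced map $E_A \to K$ is a bijective isometric bimodule map, i.e.\ an isomorphism of $\Cst$-correspondences.

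Finally, the concluding $1$--$1$ correspondence of objects follows formally: by \cite[Theorem 2.5 and Corollary 2.6]{BHINW23} quantum graphs $(\X,A)$ are in bijection with their edge correspondences $E_A$, symmetric generating sets $S\subset\Irr(\widehat\G)$ biject with central undirected quantum Cayley structures $P_S$, and the isomorphism $E_A \cong K$ just established identifies the edge correspondence of $(C(\G),A)$ with the Vergnioux $\Cst$-correspondence determined by $P_S$.
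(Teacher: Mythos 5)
Your proposal is correct and follows essentially the same route as the paper's proof: the single inner-product identity $\langle\Phi(\xi),\Phi(\xi)\rangle_{C(\G)}=\langle\xi\,|\,\xi\rangle_{C(\G)}$ yields well-definedness and injectivity simultaneously, while surjectivity (the span of $\Delta(a)(1\otimes b)$ is everything) and the bimodule property are immediate from the form of $\Phi$. You merely spell out the two inputs -- traciality of $\psi$ together with $P_S^2=P_S$, and $S(P_S)=P_S$ from the symmetry of $S$ -- that the paper's one-line step $(\psi\otimes\id)\bigl[(P_S\otimes b_i^*)\Delta(a_i^*a_j)(P_S\otimes b_j)\bigr]=b_i^*(P_S\star(a_i^*a_j))b_j$ leaves implicit.
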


\begin{proof}
Since $C(\G) \otimes C(\G)$ is spanned by elements of the form $\Delta(a)(1 \otimes b)$, the map $\Phi$ will be surjective, provided it is well defined.  In that case, it is also clear that $\Phi$ will be a $C(\G)$-bimodule map. We now show that $\Phi$ is in fact a well-defined isometry by showing that it preserves the $C(\G)$-valued inner products.  Let $\xi = \sum_i a_i \otimes b_i \in C(\G) \otimes C(\G)$. We then have 
\begin{align*}
\langle \Phi(\xi), \Phi(\xi) \rangle_{C(\G)} &= \sum_{i,j} (\psi \otimes \id) (P_S \otimes b_i^*)\Delta(a_i^*a_j)(P_S \otimes b_j) = \sum_{i,j} b_i^* (P_S\star (a_i^*a_j)) b_j \\
&=\sum_{i,j} b_i^* A(a_i^*a_j) b_j = \langle \xi | \xi \rangle_{C(\G)}. 
\end{align*}
\end{proof}
\begin{remark}
The Vergnioux correspondence classically corresponds to edges identified as pairs $(s,g)$, while the edge correspondence uses the pairs $(g,sg)$, which motivated the formula we used for the isomorphism. The equivalence above persists also in the case of quantum Cayley graphs of general discrete quantum groups, as considered in \cite{Was24}.
\end{remark}

\section{A `Coloured' version of Frucht's theorem} \label{sec:coloured-Frucht}

In this section, we quantize the first part of the proof of the Frucht's theorem, which classically corresponds to the following statement.

\begin{prop}
\label{P.classical1}
    Let $G$ be a finite group. For $g\in G$, denote by $\Gr_g$ the directed Cayley graph of $G$ with respect to the one-element set $S=\{g\}$. Then $G=\bigcap_{g\in G}\Aut\Gr_g$.
\end{prop}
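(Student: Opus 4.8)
The plan is to prove the two inclusions $G\subseteq\bigcap_{g\in G}\Aut\Gr_g$ and $\bigcap_{g\in G}\Aut\Gr_g\subseteq G$ separately, throughout identifying $\Aut\Gr_g$ with the group of permutations $\sigma$ of the vertex set $G$ (equivalently, the $*$-automorphisms of $C(G)$) that commute with the adjacency matrix. With the conventions fixed above, $\Gr_g$ has an edge $b\to a$ exactly when $a=gb$, so a permutation $\sigma$ belongs to $\Aut\Gr_g$ if and only if $a=gb\iff \sigma(a)=g\sigma(b)$ for all $a,b\in G$.

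For the inclusion $G\subseteq\bigcap_{g\in G}\Aut\Gr_g$, I would let $G$ act on itself by \emph{right} translations $\rho_k\colon h\mapsto hk^{-1}$ and verify directly that each $\rho_k$ preserves every $\Gr_g$: if $a=gb$ then $\rho_k(a)=ak^{-1}=gbk^{-1}=g\,\rho_k(b)$, and the converse implication is identical. Since $\rho\colon k\mapsto\rho_k$ is an injective group homomorphism into $\mathrm{Sym}(G)$ (with $\rho_k=\id$ forcing $k=e$), this exhibits a copy of $G$ inside the intersection. The choice of \emph{right} rather than left translations is dictated by the paper's convention $A_{gh}=1\iff g=kh$, under which left translations would permute the family $\{\Gr_g\}_{g}$ among themselves rather than fix each member; keeping track of this side is essentially the only bookkeeping required.

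For the reverse inclusion, I would take $\sigma\in\bigcap_{g\in G}\Aut\Gr_g$ and extract from each graph only the one consequence I need. Applying the edge-preservation condition for $\Gr_g$ to the pair $(a,b)=(gb,b)$ gives $\sigma(gb)=g\,\sigma(b)$ for all $b\in G$, and this identity is available for \emph{every} $g\in G$ precisely because the intersection runs over all singleton sets. Setting $b=e$ then yields $\sigma(g)=g\,\sigma(e)$ for all $g\in G$, so that $\sigma$ is the right translation by $\sigma(e)$ and hence lies in the copy of $G$ found above. Combining the two inclusions completes the proof.

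The step requiring the most care is not any individual computation but the structural point behind the reverse inclusion: a \emph{single} graph $\Gr_g$ has far more automorphisms than $G$ (it is a disjoint union of directed cycles and admits, for example, arbitrary permutations among cycles of equal length), so one genuinely must use the whole family $\{\Gr_g\}_{g\in G}$ at once. The argument above makes this transparent, since the determining relation $\sigma(g)=g\sigma(e)$ is assembled from a single instance of the automorphism condition for each $g\in G$.
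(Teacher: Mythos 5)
Your proposal is correct and follows essentially the same route as the paper: the paper also verifies that right translations preserve each $\Gr_g$, and for the converse uses that in $\Gr_g$ the unique out-neighbour of $e$ is $g$ and of $h$ is $gh$, which is exactly your relation $\sigma(g)=g\,\sigma(e)$ obtained from the edge $e\to g$. The only cosmetic difference is your use of $\rho_k\colon h\mapsto hk^{-1}$ (to make $k\mapsto\rho_k$ a homomorphism) where the paper simply speaks of right multiplication by $h=\phi(e)$.
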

\begin{proof}
    In each $\Gr_g$, there is an edge $h\to k$ if and only if $k=gh$. Let $\phi$ be a common automorphism of all the graphs. If $\phi$ maps $e\mapsto h$, then it has to map $g\mapsto gh$  since in $G_g$, $g$ is the unique neighbour of $e$ and $gh$ is the unique neighbour of $h$. That is, $\phi$ is the right multiplication by $h$. Conversely, the right multiplication by $h$ clearly is an automorphism of each $\Gr_g$ for every $h$.
\end{proof}

Before we pass to its quantum counterpart, we prove an easy algebraic lemma, which can be informally phrased as follows: if an action on a given (quantum) group commutes with all the left translation operators, it must be given by right translations. We will formulate it in the context of actions on finite quantum groups, as this is all we need and allows us to avoid any technicalities, reducing the problem to an algebraic computation.

\begin{lem}\label{lem:commute}
Let $\G$ be a finite compact quantum group. Suppose that $\alpha: C(\G) \to C(\G) \otimes \cO(\Hb)$ is an action of a compact quantum group $\Hb$ on $C(\G)$ such that for every functional $\omega\in \C(\G)^*$  we have
\begin{equation} ( L_\omega  \otimes \id_{\cO(\Hb)}) \circ \alpha = \alpha \circ L_\omega. \label{commuteleft}\end{equation}
Then there exists a (unique) Hopf $^*$-morphism $\gamma: \cO(\G) \to \cO(\Hb)$ intertwining the actions, i.e.\ such that 
\begin{equation} \label{intertwinactions}  (\id_{\cO(\G)} \otimes \gamma) \circ \Delta_\G = \alpha . \end{equation}
\end{lem}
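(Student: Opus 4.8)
The plan is to manufacture $\gamma$ as a single counit slice of $\alpha$ and then read off every required property. Observe first that the desired relation \eqref{intertwinactions} already dictates the definition: applying $\epsilon_\G \otimes \id_{\cO(\Hb)}$ to both sides of $(\id_{\cO(\G)}\otimes\gamma)\circ\Delta_\G = \alpha$ and using the counit identity $(\epsilon_\G\otimes\id_{\cO(\G)})\circ\Delta_\G = \id$ forces $\gamma = (\epsilon_\G\otimes\id_{\cO(\Hb)})\circ\alpha$. This simultaneously pins down the candidate and settles uniqueness. I would therefore \emph{define} $\gamma := (\epsilon_\G\otimes\id_{\cO(\Hb)})\circ\alpha$ and spend the rest of the argument verifying \eqref{intertwinactions} together with the Hopf $^*$-morphism property.

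The key move is to repackage the hypothesis \eqref{commuteleft}. Since $L_\omega = (\omega\otimes\id_{C(\G)})\circ\Delta_\G$, both sides of \eqref{commuteleft} factor as $\omega$ applied to the first tensor leg; concretely \eqref{commuteleft} reads $(\omega\otimes\id\otimes\id)\circ(\Delta_\G\otimes\id_{\cO(\Hb)})\circ\alpha = (\omega\otimes\id\otimes\id)\circ(\id_{C(\G)}\otimes\alpha)\circ\Delta_\G$. Because functionals separate vectors of the first leg, asking this for all $\omega\in C(\G)^*$ is equivalent to the single coassociativity-type identity
\begin{equation*}(\Delta_\G\otimes\id_{\cO(\Hb)})\circ\alpha = (\id_{C(\G)}\otimes\alpha)\circ\Delta_\G.\end{equation*}
Applying $\id_{C(\G)}\otimes\epsilon_\G\otimes\id_{\cO(\Hb)}$ to this identity, using $(\id\otimes\epsilon_\G)\circ\Delta_\G=\id$ on the left and the definition of $\gamma$ on the right, yields exactly $\alpha = (\id_{\cO(\G)}\otimes\gamma)\circ\Delta_\G$, which is \eqref{intertwinactions}.

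It remains to check that $\gamma$ is a Hopf $^*$-morphism. That $\gamma$ is a unital $^*$-homomorphism is immediate, since $\alpha$ is one and $\epsilon_\G\otimes\id_{\cO(\Hb)}$ is one as well (the counit of a CQG Hopf $^*$-algebra is a character). The relation carrying genuine content is comultiplicativity, $\Delta_\Hb\circ\gamma = (\gamma\otimes\gamma)\circ\Delta_\G$; here I would substitute $\alpha = (\id\otimes\gamma)\circ\Delta_\G$ into the action equation $(\alpha\otimes\id_{\cO(\Hb)})\circ\alpha = (\id_{C(\G)}\otimes\Delta_\Hb)\circ\alpha$, simplify both sides by coassociativity, and then apply $\epsilon_\G$ to the first leg to strip off the $C(\G)$-component. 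Once $\gamma$ is a unital $^*$-homomorphism intertwining the comultiplications, the standard fact that such a map between CQG Hopf $^*$-algebras automatically respects counits and antipodes upgrades it to a Hopf $^*$-morphism.

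I do not anticipate a serious obstacle: the whole argument is a short chain of Sweedler-type manipulations. The only structural points are (i) recognizing that quantifying \eqref{commuteleft} over all $\omega$ is the same as the identity $(\Delta_\G\otimes\id)\circ\alpha = (\id\otimes\alpha)\circ\Delta_\G$, and (ii) invoking the action equation to promote the linear slice $\gamma$ to a coalgebra morphism. If anything needs care, it is the bookkeeping of which tensor leg each map acts on, which I would control by writing the displayed identities with explicit leg labels (or diagrammatically, in the spirit of the spider calculus already set up earlier in the paper).
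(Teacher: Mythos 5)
Your proposal is correct and follows essentially the same route as the paper's proof: define $\gamma = (\epsilon_\G\otimes\id_{\cO(\Hb)})\circ\alpha$, repackage the hypothesis as the global identity $(\Delta_\G\otimes\id_{\cO(\Hb)})\circ\alpha = (\id_{C(\G)}\otimes\alpha)\circ\Delta_\G$, deduce \eqref{intertwinactions} by a counit slice, and obtain comultiplicativity from the action equation. Your substitution-based derivation of $\Delta_\Hb\circ\gamma = (\gamma\otimes\gamma)\circ\Delta_\G$ is just a reorganization of the paper's chain of equalities, so there is nothing substantive to add.
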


\begin{proof}
Set $\gamma = (\epsilon_\G \otimes \id_{\cO(\Hb)}) \circ \alpha:\cO(\G) \to \cO(\Hb)$. It is clear that $\gamma$ is a unital $^*$-homomorphism.
Note that the equation \eqref{commuteleft} can be rewritten as
\[ (\omega \otimes \id_{\cO(\G)} \otimes \id_{\cO(\Hb)}) \circ (\Delta_\G \otimes \id_{\cO(\Hb)}) \circ \alpha = 
(\omega \otimes \id_{\cO(\G)} \otimes \id_{\cO(\Hb)}) \circ (\id_{\cO(\G)} \otimes \alpha) \circ \Delta_\G,\]
so, as $\omega$ is arbitrary, is equivalent to 
\begin{equation} \label{commuteleftglobal}
(\Delta_\G \otimes \id_{\cO(\Hb)}) \circ \alpha = 
 (\id_{\cO(\G)} \otimes \alpha) \circ \Delta_\G.
\end{equation}
This implies in turn that 
\begin{align*} (\id_{\cO(\G)} \otimes \gamma) \circ \Delta_\G  &=
(\id_{\cO(\G)} \otimes \epsilon_\G \otimes \id_{\cO(\Hb)}) (\id_{\cO(\G)}\otimes \alpha) \circ \Delta_\G \\&= (\id_{\cO(\G)} \otimes \epsilon_\G \otimes \id_{\cO(\Hb)}) \circ (\Delta_\G \otimes \id_{\cO(\Hb)}) \circ \alpha = \alpha.
\end{align*}
so that \eqref{intertwinactions} holds.

It remains to check that $\gamma$ intertwines the respective coproducts. Indeed,
\begin{align*} 
(\gamma \otimes \gamma) \circ \Delta_\G &= 
(\gamma \otimes \epsilon_\G \otimes \id_{\cO(\Hb)}) \circ (\id_{\cO(\G)} \otimes \alpha) \circ \Delta_\G \\&= (\gamma \otimes \epsilon_\G \otimes \id_{\cO(\Hb)}) 
(\Delta_\G \otimes \id_{\cO(\Hb)}) \circ \alpha = (\gamma \otimes \id_{\cO(\Hb)}) \circ \alpha \\&= 
(\epsilon_\G \otimes \id_{\cO(\Hb)} \otimes \id_{\cO(\Hb)}) \circ (\alpha \otimes \id_{\cO(\Hb)}) \circ \alpha \\&= (\epsilon_\G \otimes \id_{\cO(\Hb)} \otimes \id_{\cO(\Hb)}) \circ (\id_{\cO(\G)} \otimes \Delta_{\Hb}) \circ \alpha =
\Delta_{\Hb} \circ \gamma.
\end{align*}

\end{proof}

In the following proposition we formulate and prove a quantum version of Proposition \ref{P.classical1}. Observe that by spectral decomposition, any finite dimensional C*-algebra is spanned by its orthogonal projections.

\begin{prop}
\label{P.quantum1}
    Let $\G$ be a finite quantum group. Then there exists a finite collection $\{P_i\}_{i=1}^k$ of orthogonal projections in $C(\G)$ such that  the corresponding quantum Cayley graphs $(\G, A_i)$ with $A_i = P_i\, \star$ for each $i=1, \ldots, k$ have no loops and $\G \cong \bigcap_{i=1}^k  \Qut(\G, A_i)$. 
\end{prop}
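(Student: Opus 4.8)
The plan is to prove the two quantum-subgroup inclusions $\G \le \bigcap_{i} \Qut(\G, A_i)$ and $\bigcap_i \Qut(\G, A_i) \le \G$ separately, and then upgrade them to an isomorphism using finite-dimensionality. The first inclusion is immediate from Lemma~\ref{lem:G as a subgroup}: for each $i$ the coproduct exhibits $\G$ as a quantum subgroup of $\Qut(\G, A_i)$, and since all the resulting quotient maps $\cO(\Qut(\G,A_i)) \to \cO(\G)$ send the fundamental representation to $\Delta_\G$, they are compatible and factor through the intersection, giving a surjective Hopf $^*$-morphism $q : \cO(\bigcap_i \Qut(\G,A_i)) \to \cO(\G)$. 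The whole content of the proposition is therefore the reverse inclusion.

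For the reverse inclusion, the key observation is that $A_i = P_i \star (\cdot) = L_{\omega_i}$ with $\omega_i := \psi_{S(P_i)} \in C(\G)^*$, so that an action $\alpha$ of a compact quantum group $\Hb$ on $C(\G)$ preserves the quantum graph $(\G, A_i)$ precisely when $(L_{\omega_i} \otimes \id_{\cO(\Hb)}) \circ \alpha = \alpha \circ L_{\omega_i}$. Applying this to the canonical action $\alpha$ of $\Hb := \bigcap_i \Qut(\G, A_i)$ provided by Remark~\ref{rem:coloured}, and using linearity in the functional, one obtains that $\alpha$ commutes with $L_\omega$ for every $\omega$ in the linear span of $\{\omega_1, \dots, \omega_k\}$. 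If that span were all of $C(\G)^*$, Lemma~\ref{lem:commute} would produce a Hopf $^*$-morphism $\gamma : \cO(\G) \to \cO(\Hb)$ with $(\id_{\cO(\G)} \otimes \gamma)\circ \Delta_\G = \alpha$, and faithfulness of $\alpha$ would force $\gamma$ to be surjective, yielding $\Hb \le \G$.

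The main obstacle, and the only genuinely delicate point, is that the no-loops requirement forces $\epsilon(P_i) = 0$, so every $P_i$ lies in the codimension-one ideal $\ker\epsilon \subset C(\G)$, and hence $\{\omega_i\}$ can never span all of $C(\G)^*$. The way around this is to note that commuting with $L_\epsilon = (\epsilon \otimes \id_{C(\G)})\circ\Delta_\G = \id$ is automatic for \emph{any} action, so it suffices to arrange that $\{\omega_1,\dots,\omega_k\}\cup\{\epsilon\}$ spans $C(\G)^*$. I would therefore choose the $P_i$ to be a finite family of orthogonal projections spanning the finite-dimensional $\Cst$-algebra $\ker\epsilon$ (possible because every finite-dimensional $\Cst$-algebra is the linear span of its orthogonal projections, and a projection $P$ satisfies $\epsilon(P)=0$ exactly when it lies in $\ker\epsilon$, i.e.\ when the associated Cayley graph has no loops); this also makes each $(\G,A_i)$ a bona fide loopless quantum Cayley graph in the sense of Definition~\ref{def:Qcayley}. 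It then remains to check that $\epsilon$ is not in the span of $\{\omega_i\}$, which I would do by a nondegeneracy argument: since $x \mapsto \psi_{S(x)}$ is a linear bijection $C(\G)\to C(\G)^*$, it is enough to show that any $x$ annihilated by all $\omega_i$ and by $\epsilon$ vanishes. Using $\epsilon\circ S = \epsilon$ (so $S(\ker\epsilon)=\ker\epsilon$ and the $S(P_i)$ again span $\ker\epsilon$), the first condition reads $\psi(b\,x)=0$ for all $b\in\ker\epsilon$; as $b\,p_{\mathrm{triv}}=0$ for the central projection $p_{\mathrm{triv}}$ onto the trivial block, this forces the component of $x$ in $\ker\epsilon$ to vanish by faithfulness of $\psi$, and then $\epsilon(x)=0$ kills the remaining scalar multiple of $p_{\mathrm{triv}}$, so $x=0$.

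Finally, combining the two inclusions gives surjective Hopf $^*$-morphisms $\gamma : \cO(\G) \to \cO(\Hb)$ and $q : \cO(\Hb) \to \cO(\G)$. In particular $\cO(\Hb)$ is finite-dimensional (being a quotient of the finite-dimensional $\cO(\G)$), and $q\circ\gamma$ is a surjective endomorphism of the finite-dimensional space $\cO(\G)$, hence bijective; therefore both $\gamma$ and $q$ are isomorphisms and $\G \cong \bigcap_i \Qut(\G, A_i)$, as required. Everything apart from the spanning/no-loops argument of the previous paragraph is a direct application of Lemmas~\ref{lem:G as a subgroup} and~\ref{lem:commute}.
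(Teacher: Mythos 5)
Your proposal is correct and follows essentially the same route as the paper: choose orthogonal projections spanning the complement $D=\ker\epsilon$ of the counit's support block, get one inclusion from Lemma~\ref{lem:G as a subgroup}, and get the other by throwing $L_\epsilon=\id$ in with the convolution operators $L_{\psi_{S(P_i)}}$ so that Lemma~\ref{lem:commute} applies. You merely make explicit two points the paper leaves implicit, namely the duality argument showing these functionals span $C(\G)^*$ and the finite-dimensionality step upgrading the two surjections to an isomorphism.
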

\begin{proof}
Recall that a quantum Cayley graph associated with a projection $P \in C(\G)$ does not have loops if and only if $\epsilon(P)=0$. The structure of algebras associated with finite quantum groups guarantees that $C(\G) = \C P_\epsilon \oplus D$, where $P_\epsilon$ is the support projection of the counit and $D$ is another finite-dimensional $\Cst$-algebra. Choose a collection $\{P_i\}_{i=1}^k$ of orthogonal projections spanning $D$ and set $A_i = P_i\, \star$ for each $i=1, \ldots,k$. By the above argument $(C(\G),A_i)$ are quantum Cayley graphs without loops.

Lemma \ref{lem:G as a subgroup} implies that $\G$ is a quantum subgroup of $\G':=  \bigcap_{i=1}^k  \Qut(\G, A_i)$, i.e.\ we have a Hopf $^*$-morphism  $q: \cO(\G') \to \cO(\G)$ intertwining the respective actions. Consider then the action $\alpha'$ of $\G'$ on $\C(\G)$. By the assumption (see Remark \ref{rem:coloured}) it commutes with all the operators $P_i \, \star = L_{\psi_{S(P_i)}}$ (recall the notation introduced before Definition \ref{def:Qcayley}), and of course also with $L_{\varepsilon} = \id_{C(\G)}$. As the map $x \mapsto \psi_x$ is a linear bijection from $\C(\G)$ onto $C(\G)^*$, the action $\alpha'$ satisfies the assumptions of Lemma \ref{lem:commute}. This gives us a Hopf $^*$-morphism $\gamma: \cO(\G) \to \cO(\G')$, again intertwining the actions. 

\end{proof}

\begin{remark}
    It is worth noticing that the projections involved in the statement in general cannot be mutually orthogonal as elements of $\ell^2(G)$, as that would imply that  $C(\G)$ is commutative, and $\G$ would necessarily be a classical group. This means that the corresponding quantum Cayley graphs are not ``mutually disjoint'', that is, they do not satisfy $A_{i}\bullet A_{j}=A_{j}\bullet A_{i}=\delta_{ij} A_{i}$ for $i, j=1, \ldots, k$ (see Remark \ref{rem:coloured}), so they do not define a coloured quantum graph. 
    Later in this section we will see that in many cases one can prove the coloured version of the quantum Frucht's theorem with disjoint colours. Nevertheless, it is not needed for our proof of the general, uncoloured version of the quantum Frucht's theorem. 
\end{remark}

\begin{remark}
    Allowing the set of colours to be a quantum set, one can think of $(\G,\{A_i\})$ in Proposition \ref{P.quantum1} as a complete quantum Cayley graph disjointly coloured by the quantum set $\G$. 
    Note that a disjoint $k$-colouring of the edges of $(\X,A)$ is nothing but a $^*$-homomorphism from $\C^k$ to the Schur product algebra $(B(\ell^2(\X)),\bullet,\ol{(\cdot)})$ mapping $1$ to the adjacency matrix $A$.
    Similarly, given a quantum set $\mathbbm{k}$ of `colours', an edge $\mathbbm{k}$-colouring of $(\X,A)$ can be defined as a $^*$-homomorphism from $C(\mathbbm{k})$ to the Schur product algebra $B(\ell^2(\X))$ mapping $1_\mathbbm{k}$ to $A$.

    In this sense, the convolution homomorphism $C(\G) \ni x \mapsto x\star \in B(\ell^2(\G))$ is an edge $\G$-colouring of the complete graph $\eta \psi$ with loops, and the quantum automorphisms preserving (commuting with) `colours' $x\star$ are exactly the quantum automorphisms commuting with the collection $\{A_i\}_{i=1}^k$.
\end{remark}

Finally we record one more consequence of Lemma \ref{lem:commute} and the proof of Proposition \ref{P.quantum1}.

\begin{prop} \label{prop:fullalgebra}
   Suppose that $\G$ is a finite quantum group and $P \in C(\G)$ is an orthogonal projection such that its convolution powers generate $C(\G)$ (in other words, the Fourier transform $\mathcal{F}(P)$ generates $C(\widehat{\G})$ as an algebra). Set $A=P \, \star$. Then we have  
   $\G = \Qut(\G, A)$.
\end{prop}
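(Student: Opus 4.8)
The plan is to exploit that $\G$ is already a quantum subgroup of $\Hb := \Qut(\G, A)$ by Lemma~\ref{lem:G as a subgroup}, and to produce a Hopf $^*$-morphism in the opposite direction via Lemma~\ref{lem:commute}, exactly as in the proof of Proposition~\ref{P.quantum1}. Writing $\alpha\colon C(\G) \to C(\G) \otimes \cO(\Hb)$ for the universal action of $\Hb$ on the quantum graph $(\G, A)$, the whole proof reduces to verifying that $\alpha$ commutes with every left convolution operator $L_\omega$, $\omega \in C(\G)^*$. Once this is known, Lemma~\ref{lem:commute} supplies a Hopf $^*$-morphism $\gamma\colon \cO(\G) \to \cO(\Hb)$ with $(\id_{\cO(\G)} \otimes \gamma)\circ \Delta_\G = \alpha$, and it remains to check that $\gamma$ is inverse to the quotient map $q\colon \cO(\Hb) \to \cO(\G)$ of Lemma~\ref{lem:G as a subgroup}.

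The heart of the argument is the passage from the single relation $\alpha \circ A = (A \otimes \id_{\cO(\Hb)}) \circ \alpha$ to commutation with all convolution operators. First I would record that $x \mapsto (x\,\star) = L_{\psi_{S(x)}}$ is an algebra homomorphism from the convolution algebra $(C(\G), \star)$ into $(B(\ell^2(\G)), \circ)$, since by associativity of convolution $(x\,\star)\circ(y\,\star) = (x \star y)\,\star$. In particular $A^n = (P\,\star)^n = (P^{\star n})\,\star$ for every $n \ge 0$. Iterating the commutation relation for the action on $(\G, A)$ gives $\alpha \circ A^n = (A^n \otimes \id_{\cO(\Hb)}) \circ \alpha$, so $\alpha$ commutes with the convolution operator $(P^{\star n})\,\star$ for each $n$. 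The hypothesis is precisely that the convolution powers $\{P^{\star n}\}_{n \ge 0}$ linearly span $C(\G)$ (this is the Fourier-dual restatement that $\mathcal{F}(P)$ generates $C(\widehat\G)$ as an algebra, products under the algebra product of $C(\widehat\G)$ corresponding to convolution powers). Since $x \mapsto (x\,\star)$ is linear, $\alpha$ then commutes with $(x\,\star)$ for every $x \in C(\G)$, and because $x \mapsto \psi_{S(x)}$ is a linear bijection of $C(\G)$ onto $C(\G)^*$, it commutes with $L_\omega$ for all $\omega \in C(\G)^*$, as required.

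It then remains to conclude the isomorphism. Applying $(\id_{\cO(\G)} \otimes q)$ to the identity $(\id_{\cO(\G)} \otimes \gamma)\circ \Delta_\G = \alpha$ and using $(\id_{\cO(\G)} \otimes q)\circ\alpha = \Delta_\G$ yields $(\id_{\cO(\G)} \otimes q\gamma)\circ \Delta_\G = \Delta_\G$; applying $\epsilon_\G$ to the first leg gives $q \circ \gamma = \id_{\cO(\G)}$. For surjectivity of $\gamma$ I would invoke faithfulness of the universal action: the generating slices $(\omega \otimes \id)\alpha(a) = \gamma(L_\omega(a))$ all lie in the image of the homomorphism $\gamma$, so $\gamma$ is onto; being also injective (as $q\gamma = \id_{\cO(\G)}$), it is an isomorphism of Hopf $^*$-algebras with inverse $q$, whence $\G = \Qut(\G, A)$. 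I expect the only genuine obstacle to be the key step of the second paragraph — making precise that commutation with the single adjacency operator $A = P\,\star$ propagates, through its operator powers and the spanning hypothesis, to commutation with the entire family of convolution operators; everything afterward is the verbatim conclusion of Proposition~\ref{P.quantum1}.
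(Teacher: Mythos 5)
Your proof is correct and takes exactly the route the paper intends: the paper's own proof is a one-line reduction to Lemmas~\ref{lem:G as a subgroup} and~\ref{lem:commute} ``exactly as in Proposition~\ref{P.quantum1}'', and the one genuinely new detail you supply --- that the single relation $\alpha\circ A=(A\otimes\id)\circ\alpha$ propagates to $A^n=(P^{\star n})\,\star$ by associativity of convolution, whence by the spanning hypothesis to every $L_\omega$ --- is precisely the step the authors leave to the reader. The concluding verification that $q\circ\gamma=\id$ and that $\gamma$ is surjective by faithfulness of the universal action is likewise the standard ending of Proposition~\ref{P.quantum1}, so there is nothing to add.
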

\begin{proof}
    Follows from Lemmas \ref{lem:G as a subgroup} and \ref{lem:commute} exactly as in the proof of Proposition \ref{P.quantum1}. We leave the details to the reader.
\end{proof}

This naturally raises a following question.

\begin{question}
   Which finite quantum groups $\G$ admit an orthogonal projection $P \in C(\G)$ whose convolution powers generate $C(\G)$?
\end{question}

We will provide some partial answers to this question in Section \ref{sec:rigid-calyey}.

\subsection{Disjoint colours}\label{subsec:disjoint}
\begin{defn}
Let $\X$ be a quantum space and let $(A_i)_{i=1}^k$ be a finite family of quantum adjacency matrices. We say that this family defines a \emph{coloured quantum graph} if we have $A_i \bullet A_j = A_j\bullet A_i = \delta_{ij} A_i$ for $i,j=1, \ldots, k$.
\end{defn}
We will now investigate when we can impose a coloured quantum Cayley graph structure on $C(\G)$ such that the quantum automorphism group is equal to $\G$. In this case the $A_i$'s are convolution operators by some projections $P_i \in C(\G)$ and the condition $A_i \bullet A_j = A_j\bullet A_i = \delta_{ij} A_i$ is equivalent to saying that the corresponding projections are mutually orthogonal. Therefore we obtain a maximal family if we take projections $P_i$ to be rank one, i.e.\ in each matrix block of $C(\G)$ we take all the diagonal matrix units. The only freedom we have is choosing an orthonormal basis for each of the blocks. For the quantum automorphism group of the resulting coloured quantum graph the algebra of intertwiners is also closed under the Schur product. This means that we would like to prove that the subalgebra of $C(\G)$ generated by the projections $P_i$, closed under both the usual and the convolution products, is equal to $C(\G)$. On the dual side, it means that the subalgebra of $C(\widehat{\G})$ generated by the diagonal matrix coefficients, closed under both the convolution and the usual multiplication, is equal to $C(\widehat{\G})$. Because of this goal, we will need a formula for the product of two matrix coefficients.

\begin{prop}[{\cite{KS18}, \cite[Lemma 4.1]{KS17}}]\label{prop:coeffmultiplication}
Let $\widehat{\G}$ be a compact quantum group and let $\op{Irr}(\widehat{\G})$ be the set of its irreducible representations. For any $\beta,\gamma \in \op{Irr}(\widehat{\G})$, any $b,b' \in \{1,\dots, n_{\beta}\}$ and $c,c'\in \{1,\dots, n_{\gamma}\}$ we have
\begin{equation}\label{Eq:coeffmultiplication}
u_{b,b'}^{\beta} u_{c,c'}^{\gamma} = \sum_{\alpha \in \op{Irr}(\G)} \sum_{i=1}^{m(\alpha, \beta\otimes \gamma)} \sum_{a,a'} V(\alpha, \beta\otimes \gamma, i)_{a}^{b,c} u_{a,a'}^{\alpha} \overline{V(\alpha,\beta\otimes \gamma,i)_{a'}^{b',c'}},
\end{equation}
where $V(\alpha,\beta\otimes\gamma, i)$ is an isometric intertwiner from $\alpha$ to the $i$-th copy of $\alpha$ inside $\beta\otimes \gamma$.
\end{prop}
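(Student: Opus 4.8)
The plan is to recognize the left-hand side as a matrix coefficient of the tensor product corepresentation $U^\beta \otimes U^\gamma$ and then to insert the decomposition of this corepresentation into irreducibles. Write $U^\delta = (u^\delta_{i,j})$ for the representation matrix of $\delta \in \op{Irr}(\widehat\G)$ acting on $H_\delta$ with a fixed orthonormal basis $(e_i)$. Since $\Delta$ is an algebra homomorphism, the products $u^\beta_{b,b'} u^\gamma_{c,c'}$ are precisely the matrix coefficients of the tensor product corepresentation $U^\beta \otimes U^\gamma$ on $H_\beta \otimes H_\gamma$, namely
\[
u^\beta_{b,b'} u^\gamma_{c,c'} = \langle e_b \otimes e_c \mid (U^\beta \otimes U^\gamma)(e_{b'}\otimes e_{c'})\rangle,
\]
where the partial inner product is taken on the $H_\beta\otimes H_\gamma$ legs and the ordering of the product matches $\beta$ preceding $\gamma$.

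First I would invoke complete reducibility of the corepresentation category of a compact quantum group, which rests on the existence of the Haar state and makes that category semisimple. This lets one choose isometric intertwiners $V(\alpha,\beta\otimes\gamma,i) \colon H_\alpha \to H_\beta \otimes H_\gamma$, indexed by $\alpha \in \op{Irr}(\widehat\G)$ and $i = 1, \dots, m(\alpha,\beta\otimes\gamma)$, whose ranges are mutually orthogonal and exhaust $H_\beta\otimes H_\gamma$, so that one has the resolution of the identity
\[
\sum_{\alpha,i} V(\alpha,\beta\otimes\gamma,i)\, V(\alpha,\beta\otimes\gamma,i)^* = \mathrm{id}_{H_\beta\otimes H_\gamma}.
\]
Writing $V := V(\alpha,\beta\otimes\gamma,i)$, the intertwining property reads $(U^\beta\otimes U^\gamma)(V\otimes 1) = (V\otimes 1)U^\alpha$; combined with the isometry relation $V^*V = \mathrm{id}_{H_\alpha}$ and the resolution of the identity, this yields the operator identity
\[
U^\beta\otimes U^\gamma = \sum_{\alpha,i} (V(\alpha,\beta\otimes\gamma,i)\otimes 1)\, U^\alpha\, (V(\alpha,\beta\otimes\gamma,i)^*\otimes 1).
\]

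The final step is bookkeeping: taking the $((b,c),(b',c'))$ matrix coefficient of the displayed operator identity, and writing $V(\alpha,\beta\otimes\gamma,i)_a^{b,c} = \langle e_b\otimes e_c\mid V(\alpha,\beta\otimes\gamma,i)\,e_a\rangle$ together with $(V^*)_{a',(b',c')} = \overline{V(\alpha,\beta\otimes\gamma,i)_{a'}^{b',c'}}$, reproduces exactly \eqref{Eq:coeffmultiplication}. I expect the only genuinely delicate points to be conventional rather than conceptual: fixing the direction of the intertwiners and the placement of the complex conjugate so that the adjoint $V^*$ supplies the barred factor, and keeping the product ordered with $\beta$ first to match the left-hand side. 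The single substantive input is semisimplicity of the representation category, i.e.\ the availability of enough isometric intertwiners with orthogonal complementary ranges summing to the identity; everything downstream is a direct computation in the chosen orthonormal bases.
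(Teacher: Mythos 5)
Your argument is correct. Note that the paper itself gives no proof of this proposition --- it is quoted from \cite{KS18} and \cite[Lemma 4.1]{KS17} --- so there is nothing internal to compare against; but your proof is the standard one underlying those references: identify $u^{\beta}_{b,b'}u^{\gamma}_{c,c'}$ as the $((b,c),(b',c'))$ matrix coefficient of the tensor product corepresentation, use semisimplicity to pick an orthonormal basis $\{V(\alpha,\beta\otimes\gamma,i)\}_i$ of each intertwiner space $\Hom(\alpha,\beta\otimes\gamma)$ so that $\sum_{\alpha,i}VV^{*}=\id_{H_\beta\otimes H_\gamma}$, sandwich $U^{\alpha}$ between $V\otimes 1$ and $V^{*}\otimes 1$, and read off entries. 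The one point worth making explicit, which you only implicitly use, is why the ranges can be taken mutually orthogonal and exhaustive: by Schur's lemma $V^{*}W$ is a scalar for $V,W\in\Hom(\alpha,\beta\otimes\gamma)$, so this sesquilinear form is an inner product on the intertwiner space and an orthonormal basis for it gives isometries with pairwise orthogonal ranges; complete reducibility then forces the range projections to sum to the identity. Everything else is the bookkeeping you describe.
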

\begin{remark}\label{rem:cuttingcoeff}
Because we can not only form the usual products but also convolutions, whenever a matrix coefficient appears in a sum with a non-zero scalar, we know that it belongs to our algebra. It follows from the fact that $u_{a,a}^{\alpha} \star u_{d,d'}^{\alpha} \star u_{a',a'}^{\alpha} = \kappa \delta_{ad} \delta_{a'd'} u_{a,a'}^{\alpha}$ for some  $\kappa>0$, so using only diagonal coefficients we can extract any coefficient from a linear combination using convolutions.
\end{remark}
We will work with randomly selected orthonormal bases and we need to clarify what we mean by saying that something happens for almost all choices. The set of orthonormal bases is in one-to-one correspondence to the set of unitary matrices, which form an irreducible, real variety, but also a compact group. If we have a few Hilbert spaces and choose an orthonormal basis in each of them, then the relevant space will be a product of unitary groups, which is also an irreducible variety and a compact group. It means that we can talk about the Zariski topology and the Haar probability measure. Because of irreducibility, any non-empty Zariski open set will be of full measure. We will state the results in probabilistic terms, but in the proofs we will use the Zariski topology and the connection between openness and being of full measure. We will start with a simple lemma.

\begin{lem}\label{lem:nonzerocoeff} Let $\G$ be a finite quantum group and 
let $\alpha, \beta, \gamma$ be different irreducible representations of $\widehat{\G}$. If $V: \mathsf{H}_{\alpha} \to \mathsf{H}_{\beta} \otimes \mathsf{H}_{\gamma}$ is a non-zero operator (e.g.\ a non-zero intertwiner between $\alpha$ and $\beta \otimes \gamma$) then for almost all choices of orthonormal bases of $\mathsf{H}_{\alpha}, \mathsf{H}_{\beta}$, and $\mathsf{H}_{\gamma}$ all matrix coefficients $\langle f^{\beta}_{b}\otimes f^{\gamma}_{c}, V f^{\alpha}_{a}\rangle$ are non-zero.
\end{lem}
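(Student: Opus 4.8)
The plan is to view the choice of orthonormal bases as a point of a product of unitary groups and to show that each individual matrix coefficient, regarded as a function on this product, is not identically zero; the conclusion then follows immediately from the irreducibility remarks recorded just before the lemma.

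First I would parametrise the data. Fixing once and for all reference orthonormal bases of $\mathsf{H}_\alpha,\mathsf{H}_\beta,\mathsf{H}_\gamma$, a choice of orthonormal basis of $\mathsf{H}_\alpha$ is the same as a unitary $U_\alpha \in U(n_\alpha)$, whose columns $f^\alpha_a = U_\alpha e_a$ are the new basis vectors; similarly for $U_\beta,U_\gamma$. Since $\alpha,\beta,\gamma$ are pairwise different irreducible representations, the three bases are chosen independently, so the parameter space is the product $U(n_\alpha)\times U(n_\beta)\times U(n_\gamma)$, which by the preceding discussion is a compact, irreducible real variety carrying a Haar probability measure. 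For each triple $(a,b,c)$ the coefficient
\[
F_{a,b,c}(U_\alpha,U_\beta,U_\gamma) = \langle f^\beta_b \otimes f^\gamma_c, V f^\alpha_a \rangle = \langle U_\beta e_b \otimes U_\gamma e_c,\, V U_\alpha e_a\rangle
\]
is a regular function on this variety, namely a polynomial in the entries of the $U$'s and their complex conjugates.

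The key step is to verify that $F_{a,b,c}$ is not the zero function. As $U_\alpha$ ranges over $U(n_\alpha)$ its $a$-th column runs through every unit vector of $\mathsf{H}_\alpha$, and likewise the $b$-th column of $U_\beta$ runs through every unit vector of $\mathsf{H}_\beta$ and the $c$-th column of $U_\gamma$ through every unit vector of $\mathsf{H}_\gamma$; by independence of the three choices these unit vectors vary independently. Hence $F_{a,b,c}\equiv 0$ would force $\langle \eta\otimes\zeta, V\xi\rangle = 0$ for all unit vectors $\xi\in\mathsf{H}_\alpha$, $\eta\in\mathsf{H}_\beta$, $\zeta\in\mathsf{H}_\gamma$, and then for all vectors by homogeneity. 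Since the simple tensors $\eta\otimes\zeta$ span $\mathsf{H}_\beta\otimes\mathsf{H}_\gamma$, this would give $V\xi=0$ for every $\xi$, i.e.\ $V=0$, contradicting the hypothesis. This is exactly where the assumption that $\alpha,\beta,\gamma$ are distinct is used: it is what guarantees the three columns can be prescribed independently, and I expect this non-degeneracy step to be the only genuine point of the argument.

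Finally I would assemble the Zariski/measure-theoretic conclusion. The non-vanishing locus $\{|F_{a,b,c}|^2 \neq 0\}$ is a non-empty Zariski-open subset of the product variety, hence (by irreducibility) dense and of full Haar measure. Intersecting these sets over the finitely many triples $(a,b,c)$ yields a non-empty Zariski-open set of full measure on which every coefficient $\langle f^\beta_b\otimes f^\gamma_c, V f^\alpha_a \rangle$ is non-zero, which is the assertion of the lemma. The remainder beyond the non-degeneracy step is bookkeeping, relying entirely on the irreducibility statement already established before the lemma.
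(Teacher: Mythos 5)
Your proposal is correct and follows essentially the same route as the paper: parametrise bases by unitaries, observe that a coefficient vanishing identically would force $\langle \eta\otimes\zeta, V\xi\rangle=0$ for all vectors and hence $V=0$, and conclude via the Zariski-open/full-measure dichotomy on the irreducible product of unitary groups. The only cosmetic difference is that you argue directly that each coefficient function is non-zero and intersect full-measure sets, whereas the paper phrases the same step as a contradiction via a pigeonhole on the finitely many index triples.
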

\begin{proof}
Ultimately we will have to choose an orthonormal basis in each carrier space of an (equivalence class of an) irreducible representation. In this particular lemma we want to have the freedom to choose the bases in the three Hilbert space independently, so they have to come from distinct irreducible representations.

Any orthonormal basis can be written using a fixed orthonormal basis and a unitary matrix, therefore the matrix coefficients can be written as
\[
\langle U_{\beta}e^{\beta}_{b} \otimes U_{\gamma} e^{\gamma}_{c}, VU_{\alpha} e^{\alpha}_{a}\rangle.
\]
We want to show that for some choice of these unitaries, all the inner products are non-zero; this is a Zariski open set, so of full measure if non-empty. Suppose then that for all choices of unitaries, there exists a choice of indices $a,b,c$ such that the matrix coefficient is equal to zero. But there are only finitely many choices of indices, so there must be one, say $a_0, b_0, c_0$ that works for a set of unitaries of positive measure. The condition that this particular coefficient vanishes is algebraic, so if it holds on a set of positive measure, it holds everywhere. It follows that for all choices of unitaries $U_\beta, U_\gamma, U_\alpha$  we have
\[
\langle U_{\beta}e^{\beta}_{b_0} \otimes U_{\gamma} e^{\gamma}_{c_0}, VU_{\alpha} e^{\alpha}_{a_0}\rangle = 0.
\]
Since we can choose arbitrary unitaries, the vectors $U_{\alpha}e^{\alpha}_{a_0}, U_{\beta} e^{\beta}_{b_0}, U_{\gamma} e^{\gamma}_{c_0}$ are arbitrary unit vectors in respective Hilbert spaces, i.e. $\langle \xi \otimes \eta, V\zeta\rangle =0$ for all vectors. As the simple tensors span the whole tensor product, we get $V\zeta = 0$ for all $\zeta \in \mathsf{H}_{\gamma}$, thus $V=0$, which gives a contradiction.
\end{proof}
We will now apply this lemma to quantum groups that admit a non-trivial character.
\begin{prop} Let $\G$ be a finite quantum group and 
suppose that $\widehat{\G}$ admits a non-trivial character. Then for almost all choices of orthonormal bases in the carrier spaces of irreducible representations of $\widehat{\G}$ the algebra generated by the diagonal matrix coefficients, closed under convolution, is equal to $C(\widehat{\G})$.
\end{prop}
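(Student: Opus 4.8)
The plan is to prove that the subalgebra $B \subseteq C(\widehat{\G})$ generated by the diagonal matrix coefficients and closed under convolution in fact contains \emph{every} matrix coefficient $u_{mn}^{\sigma}$; since these span $C(\widehat{\G})$, this forces $B = C(\widehat{\G})$. By Remark \ref{rem:cuttingcoeff}, convolving on both sides with diagonal coefficients (which lie in $B$) extracts any individual coefficient that occurs with a non-zero scalar in some element of $B$, so it suffices to produce, for each irreducible $\sigma$ with $n_{\sigma} \geq 2$ and each pair $m \neq n$, a single element of $B$ in whose expansion $u_{mn}^{\sigma}$ appears with non-zero coefficient. For one-dimensional $\sigma$ there is nothing to prove, and the given non-trivial character $\chi$ is itself a diagonal coefficient, hence lies in $B$.

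The key product to consider is $u_{ii}^{\rho}\,\chi$, where $\rho := \sigma \otimes \bar\chi$, so that $\rho \otimes \chi \cong \sigma$ is irreducible and occurs with multiplicity one. Since $u_{ii}^{\rho}, \chi \in B$, this element lies in $B$. Applying the multiplication formula \eqref{Eq:coeffmultiplication} with $\gamma = \chi$ one-dimensional collapses the triple sum to
\[ u_{ii}^{\rho}\,\chi = \sum_{m,n} V_{m}^{i}\,\overline{V_{n}^{i}}\; u_{mn}^{\sigma}, \]
where $V \colon \mathsf{H}_{\sigma} \to \mathsf{H}_{\rho}\otimes\mathsf{H}_{\chi}$ is the essentially unique isometric intertwiner and $V_{m}^{i} = \langle f_{i}^{\rho}\otimes f^{\chi}, V f_{m}^{\sigma}\rangle$. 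Thus the coefficient of an off-diagonal $u_{mn}^{\sigma}$ equals $V_{m}^{i}\overline{V_{n}^{i}}$, and the whole problem reduces to arranging, for a generic choice of bases, that the entries $V_{m}^{i}$ are non-zero.

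I would then split into two cases according to whether $\chi$ fixes $\sigma$. If $\rho \not\cong \sigma$ (equivalently $\sigma \otimes \chi \not\cong \sigma$), then $\sigma, \rho, \chi$ are pairwise distinct irreducibles and Lemma \ref{lem:nonzerocoeff} applies directly: for almost all choices of bases all $V_{m}^{i}$ are non-zero, so every off-diagonal coefficient of $\sigma$ appears. The remaining, more delicate, case is $\rho \cong \sigma$, where the bases of source and factor can no longer be randomised independently; here $V$ becomes, under the identification $\mathsf{H}_{\rho} = \mathsf{H}_{\sigma}$, a self-intertwiner $T \colon \sigma \to \sigma\otimes\chi$, and $V_{m}^{i}$ is just the matrix entry $T_{im}$ in the chosen basis of $\mathsf{H}_{\sigma}$.

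The hard part is precisely this fixed case, and it is where the \emph{non-triviality} of $\chi$ is used. I would first show $T$ cannot be a scalar: if $T = \lambda\I$ then the intertwining relation forces $u_{ij}^{\sigma} = u_{ij}^{\sigma}\chi$ for all $i,j$; multiplying on the left by $(u_{ij}^{\sigma})^{*}$, summing, and invoking the unitarity relation $\sum_{i,j}(u_{ij}^{\sigma})^{*}u_{ij}^{\sigma} = n_{\sigma}1$ yields $n_{\sigma}1 = n_{\sigma}\chi$, i.e.\ $\chi = 1$, contradicting non-triviality. A non-scalar $T$ fails to have all entries non-zero only on a proper subvariety of the basis-choice space — vanishing of $\langle w, Tv\rangle$ for all orthogonal pairs $v \perp w$ would force every vector to be an eigenvector of $T$ — so a generic basis of $\mathsf{H}_{\sigma}$ makes all $T_{im}$ non-zero, again producing every off-diagonal $u_{mn}^{\sigma}$. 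Finally I would package the genericity: each representation contributes only finitely many non-vanishing conditions, each cutting out a non-empty Zariski-open (hence full-measure) subset of the product of unitary groups parametrising the basis choices, so their common intersection is still of full measure, and on it $B = C(\widehat{\G})$.
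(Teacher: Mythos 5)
Your proposal is correct and follows essentially the same route as the paper: split on whether $\chi$ fixes $\sigma$, use the multiplication formula with a one-dimensional tensor factor to collapse the sum, invoke Lemma \ref{lem:nonzerocoeff} in the non-fixed case and the non-scalarity of the self-intertwiner in the fixed case, then intersect finitely many full-measure sets. The only (harmless) differences are that you tensor with $\chi$ where the paper uses $\overline{\chi}$, and you spell out via the unitarity relations why a scalar intertwiner would force $\chi=1$, which the paper merely asserts.
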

\begin{proof}
Let $\chi$ be a non-trivial character. Let $\alpha$ be any irreducible representation of dimension greater than $1$. We either have $\alpha \otimes \chi \simeq \alpha$ or $\alpha \otimes \chi \simeq \beta$, where $\beta$ is different from $\alpha$. In the second case we have $\alpha \simeq \beta \otimes \overline{\chi}$. By formula \eqref{Eq:coeffmultiplication} the scalar coefficient of $u_{a,a'}^{\alpha}$ in the product $u_{b,b}^{\beta} u_{c,c}^{\overline{\chi}}$ is equal to $V(\alpha,\beta\otimes \overline{\chi})_{a}^{b,c} \overline{V(\alpha,\beta\otimes \overline{\chi})_{a'}^{b,c}}$; there is no sum, because the multiplicity is equal to $1$, as $\chi$ is one-dimensional. By Lemma \ref{lem:nonzerocoeff} for almost all choices these coefficients are non-zero, so $u_{a,a'}^{\alpha}$ will belong to the algebra we are interested in. 

In the case $\alpha \otimes \chi \simeq \alpha$, remembering that the carrier space of $\chi$ is $\C$, we get a map $V: \mathsf{H}_{\alpha} \to \mathsf{H}_{\alpha}$, which is not a multiple of the identity; the identity would not be an intertwiner, as $\chi$ is assumed to be non-trivial. It follows that for almost all choices of an orthonormal basis in $\mathsf{H}_{\alpha}$ all the matrix entries of $V$ are non-zero, and we can conclude just like in the first part of the proof.

To finish the proof, we can repeat the same argument for all irreducible representations of $\widehat{\G}$. In the end we obtain a finite number of Zariski open dense subsets, whose intersection is of full measure.
\end{proof}

From now on we can assume that $\widehat{\G}$ does not admit a non-trivial character. The next result will be an extension of Lemma \ref{lem:nonzerocoeff}.
\begin{prop}
Let $\widehat{\G}$ be a finite quantum group. Assume that for any non-trivial $\alpha \in \op{Irr}(\widehat{\G})$ there exist non-trivial irreducible representations $\beta$ and $\gamma$, distinct from $\alpha$ (and from each other), such that $\alpha \subset \beta\otimes \gamma$. Then for almost all choices of orthonormal bases the subalgebra generated by diagonal matrix coefficients, closed under the usual multiplication and convolution, is equal to $C(\widehat{\G})$.
\end{prop}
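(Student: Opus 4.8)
The plan is to show that every matrix coefficient $u^\alpha_{a,a'}$ lies in the subalgebra $\mathcal{A}$ generated by the diagonal coefficients under multiplication and convolution; since the matrix coefficients form a linear basis of $C(\widehat{\G})$, this will give $\mathcal{A} = C(\widehat{\G})$. The coefficients of the trivial representation reduce to the unit, and the diagonal coefficients $u^\alpha_{a,a}$ lie in $\mathcal{A}$ by definition, so the only task is to produce the off-diagonal coefficients $u^\alpha_{a,a'}$ (with $a \neq a'$) of the non-trivial $\alpha$. By Remark \ref{rem:cuttingcoeff}, it suffices to exhibit, for each such $\alpha$ and each pair $a\neq a'$, a single element of $\mathcal{A}$ in which $u^\alpha_{a,a'}$ occurs with a non-zero scalar coefficient; the convolution sandwich $u^\alpha_{a,a}\star(\,\cdot\,)\star u^\alpha_{a',a'}$ then isolates $u^\alpha_{a,a'}$.

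First I would fix a non-trivial $\alpha$ and, using the hypothesis, choose non-trivial irreducibles $\beta,\gamma$ that are pairwise distinct and distinct from $\alpha$ with $\alpha\subset\beta\otimes\gamma$; this distinctness is exactly what allows the orthonormal bases of $\mathsf{H}_\alpha,\mathsf{H}_\beta,\mathsf{H}_\gamma$ to be varied independently, as in Lemma \ref{lem:nonzerocoeff}. For each $b,c$ the product $u^\beta_{b,b}u^\gamma_{c,c}$ lies in $\mathcal{A}$, and by \eqref{Eq:coeffmultiplication} the coefficient of $u^\alpha_{a,a'}$ in it equals
\[
C^{b,c}_{a,a'}=\sum_{j=1}^{m}\big\langle f^\beta_b\otimes f^\gamma_c,\ V_j f^\alpha_a\big\rangle\,\overline{\big\langle f^\beta_b\otimes f^\gamma_c,\ V_j f^\alpha_{a'}\big\rangle},
\]
where $V_1,\dots,V_m$ (with $m=m(\alpha,\beta\otimes\gamma)\ge 1$) are the isometric intertwiners $\alpha\to\beta\otimes\gamma$ onto mutually orthogonal copies of $\alpha$, so that $V_j^*V_k=\delta_{jk}\,\I_{\mathsf{H}_\alpha}$. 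The set of bases for which $C^{b,c}_{a,a'}\neq0$ for at least one pair $(b,c)$ is Zariski-open in the product of unitary groups, so by the irreducibility discussed before Lemma \ref{lem:nonzerocoeff} it is enough to show that this set is non-empty, after which it automatically has full Haar measure.

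The main obstacle — the only genuinely new point beyond Lemma \ref{lem:nonzerocoeff} — is the possibility of cancellation in the sum over $j$ when the multiplicity $m$ exceeds $1$; for $m=1$ the quantity $C^{b,c}_{a,a'}$ is a product of two single matrix coefficients and Lemma \ref{lem:nonzerocoeff} applies verbatim. To handle $m>1$ I would argue by contradiction: suppose $C^{b,c}_{a,a'}=0$ for all choices of bases and all $b,c$. Writing $\xi=f^\alpha_a$, $\xi'=f^\alpha_{a'}$ and $T=\sum_{j}|V_j\xi\rangle\langle V_j\xi'|\in B(\mathsf{H}_\beta\otimes\mathsf{H}_\gamma)$, one checks $C^{b,c}_{a,a'}=\langle w,\,T w\rangle$ with $w=f^\beta_b\otimes f^\gamma_c$. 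As the bases of $\beta$ and $\gamma$ vary, $w$ ranges over all product unit vectors $u\otimes v$, so the assumption forces $\langle u\otimes v,\, T(u\otimes v)\rangle=0$ for all $u,v$; since the monomials $\bar u_k u_{k'}\bar v_l v_{l'}$ are linearly independent as functions, every matrix entry of $T$ must vanish, i.e.\ $T=0$. But then $0=V_k^*T=|\xi\rangle\langle V_k\xi'|$, whence $V_k\xi'=0$, contradicting $\|V_k\xi'\|=\|\xi'\|=1$. Hence for each $\alpha$ and each $a\neq a'$ the relevant set of bases is non-empty, and therefore of full measure.

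Finally I would intersect these finitely many full-measure sets, one for each non-trivial $\alpha$ and each pair $a\neq a'$. On the intersection every off-diagonal coefficient of every non-trivial irreducible is extractable into $\mathcal{A}$, and together with the diagonal coefficients and the unit this exhausts a basis of $C(\widehat{\G})$, giving $\mathcal{A}=C(\widehat{\G})$ for almost all choices of orthonormal bases.
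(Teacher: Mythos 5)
Your proposal is correct and follows essentially the same route as the paper: the same coefficient formula from Proposition~\ref{prop:coeffmultiplication}, the same Zariski-open/full-measure reduction, and the same contradiction via the operator $T=\sum_j |V_j\xi\rangle\langle V_j\xi'|$, whose vanishing is ruled out using the mutual orthogonality of the isometries $V_j$ (the paper applies $T$ to $V_1\zeta'$ where you compose with $V_k^*$, and it uses polarization where you invoke linear independence of the monomials, but these are cosmetic variations).
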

\begin{proof}
By Proposition \ref{prop:coeffmultiplication}, $u_{a,a'}^{\alpha}$ appears in the product $u_{bb}^{\beta} u_{cc}^{\gamma}$ with the coefficient $\sum_{i=1}^{m} \langle e_{b}^{\beta}\otimes e_{c}^{\gamma}, V_i e_{a}^{\alpha}\rangle\langle V_i e_{a'}^{\alpha}, e_{b}^{\beta}\otimes e_{c}^{\gamma}\rangle$. If this number is non-zero, then by Remark \ref{rem:cuttingcoeff} $u_{a,a'}^{\alpha}$ belongs to the algebra generated by the diagonal matrix coefficients, using both the usual products and the convolutions. If it is not true that this number is non-zero for almost all choices of orthonormal bases, then  just like in the proof of Lemma \ref{lem:nonzerocoeff} we conclude that for all vectors $\zeta, \zeta' \in \mathsf{H}_{\alpha}$, $\xi \in \mathsf{H}_{\beta}$, and $\eta \in \mathsf{H}_{\gamma}$, with $\zeta \perp \zeta'$ we have 
\[
\sum_{i=1}^{m} \langle \xi\otimes \eta, V_i \zeta\rangle\langle V_i \zeta', \xi\otimes \eta\rangle = 0.
\]
By the polarization formula, first applied to $\xi$, and then to $\eta$, we conclude that the operator $T:=\sum_{i=1}^{m} |V_i \zeta\rangle\langle V_i \zeta'|$ is equal to $0$. This is not possible because, $T V_{1}\zeta' = V_1 \zeta \|V_1 \zeta'\|^2 \neq 0$, as the ranges of the $V_i$'s are mutually orthogonal. It follows that for almost all choices we get all matrix coefficients of $\alpha$ in our algebra. Now we do this for all the irreducible representations of $\widehat{\G}$ and get a finite intersection of sets of full measure.
\end{proof}
It seems likely that the result holds for all finite quantum groups with no extra assumptions but we are unable to prove it at the moment.

\section{From multiple graphs to one graph}
\label{sec.step2}

The second part of the proof of the Frucht's theorem amounts to replacing the set of directed (quantum) graphs by a single -- possibly larger -- (quantum) graph, whose (quantum) automorphism group coincides with the intersection of the (quantum) automorphism groups of graphs in the mentioned set. 

Frucht in his original article \cite{Fru39} achieves this by replacing each directed edge $g\to h$ in the $i$-th graph by the following construction in the new graph
$$
\begin{tikzpicture}[baseline={(0,-.5ex)},every node/.style={circle,draw=white,line width=2pt,fill=black,inner sep=1.8pt,label distance=-5pt}]
    \draw (0,0) node[label=below:$g$] {} --
          (1,0) node[label=below:$a_0$] (a) {} --
          (2,0) node[label=below:$b_0$] (b) {} --
          (3,0) node[label=below:$h$] {};
    \node[label=left:$a_1$] (a1) at (1,1) {} edge (a);
    \node[label=left:$a_2$] (a2) at (1,2) {} edge (a1);
    \node[label=left:$a_{2i-1}$] (a3) at (1,3) {} edge[dotted] (a2);
    \node[label=right:$b_1$] (b1) at (2,1) {} edge (b);
    \node[label=right:$b_2$] (b2) at (2,2) {} edge (b1);
    \node[label=right:$b_{2i-1}$] (b3) at (2,3) {} edge[dotted] (b2);
    \node[label=right:$b_{2i}$] (b4) at (2,4) {} edge (b3); 
\end{tikzpicture}
$$

However, in case of quantum graphs, it seems unclear how to replace ``quantum edges'' in a useful way. On the other hand, replacing vertices seems to be much easier, as we will show below. As a result, we formulate an alternative approach to the proof of Frucht's theorem. Note that in contrast with Frucht's original construction, our approach works only for regular (quantum) graphs.  This apparent restriction, however, turns out to be fine as (quantum) Cayley graphs are always regular. To make the construction clear, we first formulate it classically and then in the quantum setting. Also we chose to first formulate a directed version of the statement, which is slightly easier to present.

\subsection{Directed version}

\begin{prop}
\label{P.classical21}
  Let $ N, n \in \N$ and let $\{\Gr_i\}_{i=1}^n$ be a set of directed regular graphs without loops on $N$ vertices. Then there exists a directed graph $\Gr$ such that $\Aut \Gr=\bigcap_{i=1}^n\Aut\Gr_i$.
\end{prop}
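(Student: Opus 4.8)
The plan is to follow the ``replace vertices'' strategy announced before the statement: instead of subdividing edges as Frucht does, I would blow up each of the $N$ vertices into a fixed finite rigid ``gadget'' carrying $n$ distinguishable \emph{ports}, one per graph $\Gr_i$, and then reconnect the ports across gadgets according to the edges of the corresponding $\Gr_i$. Concretely, I would take the vertex set of $\Gr$ to be $\bigl([N]\times\{1,\dots,n\}\bigr)\sqcup T$, where $(v,i)$ is the port of colour $i$ sitting over the original vertex $v$ and $T$ consists of auxiliary \emph{tag} vertices. To each port $(v,i)$ I would attach a directed pendant path of length $i$ (with vertices in $T$); the differing lengths make the $n$ ports over a given vertex pairwise non-isomorphic and hence force any automorphism to preserve the colour $i$. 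To bind the $n$ ports over a common vertex into a single ``column'', I would add the directed edges $(v,1)\to(v,2)\to\cdots\to(v,n)$. Finally, to encode the graphs themselves, for each colour $i$ and each directed edge $g\to h$ of $\Gr_i$ I would add the edge $(g,i)\to(h,i)$, so that the colour-$i$ layer $[N]\times\{i\}$ carries an isomorphic copy of $\Gr_i$.

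Given this $\Gr$, I would prove $\Aut\Gr=\bigcap_{i=1}^n\Aut\Gr_i$ by the two inclusions. For ``$\supseteq$'', any $\sigma\in\bigcap_i\Aut\Gr_i$ lifts to $\tilde\sigma(v,i)=(\sigma(v),i)$ acting trivially on the tag coordinate; since $\sigma$ preserves each $\Gr_i$ and acts identically across all layers, $\tilde\sigma$ respects the layer edges, the column edges, and the tags, so $\tilde\sigma\in\Aut\Gr$. For ``$\subseteq$'', I would take $\tau\in\Aut\Gr$ and argue in stages: first that $\tau$ preserves the tag subgraph and hence the set of ports and the colour of each port (from the distinct pendant-path lengths); second that $\tau$ respects columns and acts as the identity on the colour coordinate (from the column paths together with colour preservation, which propagate a single choice $(v,1)\mapsto(\sigma(v),1)$ along the column); and third that the resulting permutation $\sigma$ of $[N]$, being forced to carry the colour-$i$ layer to itself edge-preservingly, must lie in $\Aut\Gr_i$ for every $i$.

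The hard part will be the first two stages of ``$\subseteq$'', i.e.\ ruling out \emph{spurious} automorphisms that trade gadget edges (tag and column edges) for graph edges. This is exactly where the hypotheses enter. Because each $\Gr_i$ is regular with some in/out-degree $d_i$, all ports of a fixed colour $i$ look identical from the point of view of the graph edges, so the only structure that varies from vertex to vertex is the gadget structure; thus degree and directed-reachability invariants are available to pin down the role (tag vertex, port of colour $i$, etc.) of every vertex of $\Gr$ independently of the ambient graph edges. The directedness is what makes these invariants strong enough: orientations let me separate tag paths and column paths from graph edges by their local in/out-degree signatures, which I would arrange to be disjoint from the regular values $d_i$ by, if necessary, thickening the tag gadget (replacing single pendant vertices by larger rigid directed tails). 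Once each vertex's role and colour are invariant under $\tau$, the rigidity of the tag gadget forces $\tau$ to fix the colour coordinate and permute columns, completing the reduction to $\bigcap_i\Aut\Gr_i$. I would also emphasise that this vertex-replacement construction, unlike Frucht's edge subdivision, is precisely the one that admits a quantum analogue in Theorem~\ref{T.q22}, which is the whole point of reproving the classical result this way.
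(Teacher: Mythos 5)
Your overall strategy --- blow up each vertex into a gadget with $n$ colour-labelled ports, copy $\Gr_i$ onto the colour-$i$ layer, and use regularity so that the only vertex-distinguishing structure is the gadget --- is the same vertex-replacement idea the paper uses, and the two inclusions are organised identically. But as written there is a genuine gap: the decisive step, namely that every $\tau\in\Aut\Gr$ must preserve the partition into tag vertices and ports and must fix the colour coordinate, is only announced, not proved. Your construction is not even fully determined (the orientation of the pendant paths is unspecified, and you reserve the right to ``thicken the tag gadget if necessary''), and the local invariants you invoke do not separate the cases without further work. Concretely: a port $(v,n)$ with $d_n=0$ has in-degree $1$ and out-degree $1$, exactly like an interior tag vertex; the terminal tags of all $n$ pendant paths have identical in/out-degree signatures; and a first tag vertex of a long pendant path (in-degree $1$, out-degree $1$) can collide with a port whenever $d_i$ and the column contributions conspire. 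Each of these can be resolved by second-order or reachability arguments, but none of that is carried out, and since you do not order the $d_i$ or otherwise control them, you cannot simply assert the in/out-degree signatures of tags are ``disjoint from the regular values $d_i$.'' The claim that regularity makes all colour-$i$ ports look alike is correct and necessary, but it does nothing to separate ports from tags or colours from one another, which is where the proof actually lives.

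For comparison, the paper avoids all of this by engineering a single auxiliary simple graph $\Hr$ on $\{0,1,\dots,n\}$ in which vertex $i\ge 1$ has degree exactly $i$ (vertices $i\neq j$ adjacent iff $i+j>n$, and $0$ adjacent to all $i>\lfloor n/2\rfloor$). After ordering $d_1\le\cdots\le d_n$, the degree of $(v,i)$ in $\Gr$ is exactly $i+d_i$, so the colour is read off from the degree alone, with the single possible collision (label $0$ versus one label $i_0$) resolved explicitly by adjacency to the label $\lfloor n/2\rfloor+1$; the layer permutations are then identified using that vertex $n$ of $\Hr$ is adjacent to every other label, so $(v,i)$ is the unique colour-$i$ neighbour of $(v,n)$. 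This gives a gadget of size $n+1$ per vertex rather than your $n+\tfrac{n(n+1)}{2}$, and, more importantly for the paper's purposes, it is a degree-spectrum argument that quantises directly via Proposition~\ref{prop:eigenvalues}, whereas pendant-path and reachability arguments do not. To repair your proof you would need to fix the gadget once and for all and verify, case by case, that the in/out-degree (or a bounded-radius directed neighbourhood invariant) determines the role and colour of every vertex of $\Gr$ for arbitrary values of $d_1,\dots,d_n$.
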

\begin{proof}
Denote by $X$ the vertex set of the graphs $\Gr_i$. Denote by $d_i$ the degree of regularity of each $\Gr_i$. Assume that $d_1\le\cdots\le d_n$.

We construct a (classical) simple graph $\Hr$ on $n+1$ vertices indexed by $\{0,1,\dots,n\}$ as follows. The vertex 0 is connected to all $i>\lfloor n/2\rfloor$. A pair of nonzero vertices $i\neq j$ is connected if and only if $i+j>n$. This graph has the property that each vertex $1\le i\le n$ has degree $d_i=i$. See Fig.~\ref{fig:H}.

\begin{figure}[t]
\hbox to\hsize{\hfil
\vbox{\hsize=.3\hsize\noindent\hfil
\begin{tikzpicture}[baseline={(0,-.5ex)},every node/.style={circle,draw=white,line width=2pt,fill=black,inner sep=1.8pt,label distance=-5pt}]
\draw (0,2) node [label=above:0] (0) {};
\draw ({225}:1) node[label=below:1] (1) {};
\draw ({315}:1) node[label=below:2] (2) {};
\draw ({45}:1) node[label=right:3] (3) {};
\draw ({135}:1) node[label=left:4] (4) {};

\draw (1)--(4)--(0)--(3)--(4)--(2)--(3);
\end{tikzpicture}
}
\vbox{\hsize=.3\hsize\noindent\hfil
\begin{tikzpicture}[baseline={(0,-.5ex)},every node/.style={circle,draw=white,line width=2pt,fill=black,inner sep=1.8pt,label distance=-5pt}]
\draw (0,2) node [label=above:0] (0) {};
\draw ({234}:1) node[label=below:1] (1) {};
\draw ({306}:1) node[label=below:2] (2) {};
\draw ({18}:1) node[label=right:3] (3) {};
\draw ({90}:1) node[label=above left:4] (4) {};
\draw ({162}:1) node[label=left:5] (5) {};

\draw (1)--(5)--(0)--(3)--(4)--(5)--(2)--(4)--(0);
\draw (3)--(5);
\end{tikzpicture}
}
\vbox{\hsize=.3\hsize\noindent\hfil
\begin{tikzpicture}[baseline={(0,-.5ex)},every node/.style={circle,draw=white,line width=2pt,fill=black,inner sep=1.8pt,label distance=-5pt}]
\draw (0,2) node [label=above:0] (0) {};
\draw ({210}:1) node[label=left:1] (1) {};
\draw ({270}:1) node[label=below:2] (2) {};
\draw ({330}:1) node[label=right:3] (3) {};
\draw ({30}:1) node[label=right:4] (4) {};
\draw ({90}:1) node[label=above left:5] (5) {};
\draw ({150}:1) node[label=left:6] (6) {};

\draw (1)--(6)--(0)--(4)--(5)--(6)--(2)--(5)--(3)--(6);
\draw (5)--(0);
\draw (3)--(4)--(6);
\end{tikzpicture}
}\hfil}
    \caption{Graph $\Hr$ for $n=4,5,6$.}
    \label{fig:H}
\end{figure}

Now the graph $\Gr$ is constructed by replacing each vertex $v\in X$ with a copy of $\Hr$. For $v\neq w \in X$ and $i\neq 0$, we construct a directed edge $(v,i)\to(w,i)$ whenever there is an edge $v\to w$ in $\Gr_i$.  We also construct a directed edge $(v,i)\to(v,j)$ whenever $i$ is connected to $j$ in $\mathcal H$.  Observe that any common automorphism $g$ of all the graphs $\Gr_i$ acts on $\Gr$ by $(v,i)\mapsto (g(v),i)$, so we have a natural injective embedding of  $\bigcap_{i=1}^n\Aut\Gr_i$ into $\Aut \Gr$. For simplicity we will simply write $\Aut\Gr\supseteq\bigcap_{i=1}^n\Aut\Gr_i$. Now we need to show the converse inclusion.

In $\Gr$, it holds that for $i\neq 0$ the degree of each $(v,i)$ equals to $i+d_i$. For $i=0$, the degree equals to $\lceil n/2\rceil$. So, the degree of $(v,i)$ is precisely determined by $i$. Conversely, the degrees of $(v,i)$ are distinct for different $i$ except it could happen that the degree of $(v,0)$ equals the degree of $(v,i_0)$ for some $i_0<\lceil n/2\rceil$. But we can distinguish vertices labeled by $0$ from those labelled by $i_0$ by noticing that vertices labelled by 0 are connected to vertices labelled by $\lfloor n/2\rfloor+1$, while vertices labelled by $i_0$ are not. Therefore, every automorphism of $\Gr$ preserves $i$ when acting on $(v,i)$ and hence $\Aut \Gr\subset S_X\times\Aut \Gr_1\times\cdots\times\Aut \Gr_n$. Finally, taking some $(g_0,g_1,\dots,g_n)\in\Aut \Gr$, we must have $g_0=g_1=\cdots=g_n$. Indeed, if $g_n(v,n)=(v',n)$, then $g_i(v,i)=(v',i)$ since $(v,i)$ is the unique neighbour of $(v,n)$ among vertices labelled by $i$. Hence, $\Aut \Gr\subseteq\bigcap_{i=1}^n\Aut \Gr_i$, which is what we wanted to show. 
\end{proof}

One should note that the procedure described in the above statement is far more efficient than the standard one presented in \cite{Fru39}, but requires the input graphs to be regular.

We are now ready for the quantum version.

\begin{thm} \label{thm:directed-frucht}
    Let $\X$ be a finite quantum set, let $n \in \N$ and let $\{\Gr_i\}_{i=1}^n$, $\Gr_i=(\X,A_i)$ be a set of directed regular quantum graphs without loops on $\X$. Then there exists a directed quantum graph $\Gr=(\X',A')$ such that $\Qut(\Gr)=\bigcap_{i=1}^n\Qut(\Gr_i)$.
\end{thm}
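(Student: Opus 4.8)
The plan is to quantise the construction of Proposition~\ref{P.classical21} by replacing vertices, never edges. I would take the quantum vertex set $\X'$ with $C(\X') = C(\X)\otimes\C^{n+1}$ --- that is, $n+1$ labelled copies of $\X$ --- equipped with the (automatically unique) tracial $1$-form $\psi' = \psi\otimes\psi_{n+1}$, where $\psi_{n+1}$ is the counting trace on $\C^{n+1}$. After relabelling so that the regularity degrees satisfy $d_1\le\cdots\le d_n$, I build the auxiliary simple graph $\Hr$ on $\{0,1,\dots,n\}$ exactly as in Proposition~\ref{P.classical21} and let $(B_{ij})$ be its symmetric $0/1$ adjacency matrix. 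Viewing operators on $\ell^2(\X') = \ell^2(\X)\otimes\C^{n+1}$ as $(n+1)\times(n+1)$ matrices with entries in $B(\ell^2(\X))$, I define $A'$ blockwise by
\[
A'_{ii} = A_i \ (1\le i\le n),\qquad A'_{00} = 0,\qquad A'_{ij} = B_{ij}\,\I\ (i\ne j),
\]
so the diagonal blocks carry the graphs $\Gr_i$ and the off-diagonal blocks are the ``same-vertex'' identity connections dictated by $\Hr$. Since the Schur product on $\ell^2(\X')$ is computed blockwise, $(A'\bullet A')_{ij} = A'_{ij}\bullet A'_{ij}$, so $A'\bullet A' = A'$ and $\bar{A'} = A'$ are immediate (each block is $A_i$, $\I$, or $0$), and $A'\bullet\I_{\X'} = 0$ because the $A_i$ are loopless; thus $(\X',A')$ is a directed loopless quantum graph.

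The easy inclusion I would record first. If $v = (v_{ab})$ is the fundamental representation of $\bigcap_{i=1}^n\Qut(\X,A_i)$, then $U' := v\otimes\id_{\C^{n+1}}$ is unitary, multiplicative, fixes $\eta'$ and preserves $\psi'$, and blockwise $U'A' = A'U'$ reads $vA'_{ij} = A'_{ij}v$: this holds on the diagonal because $v$ commutes with each $A_i$, and off the diagonal because $A'_{ij}$ is a scalar multiple of $\I$. Hence $\bigcap_i\Qut(\X,A_i)$ acts on $(\X',A')$, giving a surjection $\cO(\Qut(\X',A'))\twoheadrightarrow\cO(\bigcap_i\Qut(\X,A_i))$.

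The substance is the reverse inclusion, which I would obtain by transcribing the classical degree argument at the level of intertwiners of the fundamental representation $U'$ of $\Qut(\X',A')$. Writing $P_k = \I\otimes E_{kk}$ for the projection onto the $k$-th layer (here $E_{kk}$ are the diagonal matrix units on $\C^{n+1}$), a direct computation gives $A'\eta' = \sum_k c_k\,\eta\otimes\delta_k$ with $c_k = d_k + \op{deg}_{\Hr}(k)$ (and $d_0 := 0$), so the degree operator is $D_{A'} = \sum_k c_k P_k$. Because $\op{deg}_{\Hr}(i) = i$ for $i\ge 1$ and the $d_i$ are nondecreasing, $c_1 < \cdots < c_n$ strictly, and the only possible coincidence is $c_0 = \lceil n/2\rceil = c_{i_0}$ for at most one $i_0 < \lceil n/2\rceil$. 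By Proposition~\ref{prop:eigenvalues} the spectral projections of $D_{A'}$ are intertwiners of $U'$, which already shows every $P_k$ with $k\notin\{0,i_0\}$ is an intertwiner and that $P_0+P_{i_0}$ is one. To split the surviving pair I would use, as classically, that the layer $q := \lfloor n/2\rfloor+1$ is $\Hr$-adjacent to $0$ but not to $i_0$; since $A'$, $(A')^*$, $P_q$ and $P_0+P_{i_0}$ all lie in $\Hom(U',U')$ (a $*$-category, as in the proof of Proposition~\ref{prop:eigenvalues}), so does
\[
T := (P_0+P_{i_0})\,(A')^*\,P_q\,A'\,(P_0+P_{i_0}),
\]
and the block identities $P_qA'P_0 = \I$, $P_qA'P_{i_0} = 0$ give $T = P_0$. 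Hence all $P_k$ are intertwiners and $U' = \bigoplus_{k=0}^n u'_k$ is block diagonal.

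It then remains to glue the blocks. The blockwise form of $U'A' = A'U'$ yields $u'_iA_i = A_iu'_i$ on each diagonal block and $u'_i\I = \I u'_j$, i.e.\ $u'_i = u'_j$, whenever $B_{ij} = 1$; since $\Hr$ is connected (vertex $n$ is adjacent to all others), all $u'_k$ equal a single $v$. The relations \eqref{eq.QAut} for $U' = v\otimes\id_{\C^{n+1}}$ descend to the same relations for $v$ on $\X$, and together with $vA_i = A_iv$ this says precisely that $v$ satisfies the defining relations of $\bigcap_i\Qut(\X,A_i)$; as the entries of $v$ generate $\cO(\Qut(\X',A'))$, we get a surjection $\cO(\bigcap_i\Qut(\X,A_i))\twoheadrightarrow\cO(\Qut(\X',A'))$ inverse to the previous one, so $\Qut(\X',A') = \bigcap_{i=1}^n\Qut(\X,A_i)$. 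I expect the one genuinely non-mechanical step to be the separation of the two layers whose degrees collide: once $\Hr$ is set up so that its degree sequence is injective away from the single ambiguous pair, every other piece is bookkeeping, but disentangling $P_0$ from $P_{i_0}$ really uses the fine adjacency structure of $\Hr$, encapsulated in the identity $T = P_0$.
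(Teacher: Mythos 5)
Your proof is correct and follows essentially the same route as the paper's: the same auxiliary graph $\Hr$, the same quantum adjacency matrix (your block matrix is exactly the paper's $\I_{\ell^2(\X)}\otimes B+\sum_{i=1}^n A_i\otimes P_i$), the same degree-spectrum argument isolating the single ambiguous pair $\{0,i_0\}$ and resolving it via a layer adjacent to $0$ but not to $i_0$, and the same gluing of the diagonal blocks of $U'$ through connectivity of $\Hr$. The only (cosmetic) divergence is your choice of the separating layer $\lfloor n/2\rfloor+1$ rather than the paper's $\lceil n/2\rceil$, which is in fact the cleaner choice since it is adjacent to $0$ for every parity of $n$.
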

\begin{proof}
Set $X_n =\{0,1, \ldots, n\}$, consider the classical graph $\Hr$ with $X_n$ as the set of vertices constructed in the proof of the previous proposition and denote by $B$ its adjacency matrix. Set $C(\X'):=C(\X)\otimes C(X_n)$ and
$$A':=\I_{\ell^2(\X)}\otimes B+\sum_{i=1}^n A_i\otimes P_i,$$
where $P_i$ is the projection on the $i$-th vertex in $\ell^2(X_n) \approx \C^{n+1}$. Note that for any operators $C_1, C_2$ on $\ell^2(\X)$ and $D_1, D_2$ on $\ell^2 (X_n) $ we have $(C_1 \otimes D_1) \bullet (C_2 \otimes D_2)
= (C_1 \bullet C_2) \otimes(D_1 \bullet D_2) $.

First, we should check that the formula above indeed defines the adjacency matrix of a quantum graph:
\begin{enumerate}
\item $A'=\bar A'$ -- follows from the fact that this holds for every summand;
\item $A'\bullet A'=A'$ -- follows from the fact that the summands are mutually orthogonal Schur-projections (as $P_i\bullet P_j=0$ for $i, j =1, \ldots, n$, $i \neq j$, and $\I_{\ell^2(\X)}\bullet A_i=0= A_i \bullet \I_{\ell^2(\X)}$), and the formula listed in the last paragraph;
\item $A'\bullet \I_{\ell^2(\X')}  =0$ -- this holds since $B\bullet \I_{\ell^2( X_n)}=0$ and $P_i\bullet \I_{\ell^2(X_n)}=0$.
\end{enumerate}

Now, let us study $\Qut(\X', A')$. Denote by $U$ its fundamental representation. We will continuously exploit Proposition \ref{prop:eigenvalues}.

Recall that in $\Hr$, each vertex $i\neq 0$ has degree $i$ and the vertex 0 has degree $\lceil n/2\rceil$, so $D_B=\lceil n/2\rceil P_0+\sum_{i=1}^n iP_i$. Now, we can compute
\begin{align*}
D_{A'}&=\I\otimes D_B+\sum_{i=1}^n d_i\,\I\otimes P_i=\I\otimes\left(D_B+\sum_{i=1}^n d_iP_i\right)\\
&=\I\otimes\left(\lceil n/2\rceil P_0+\sum_{i=1}^n(d_i+i)P_i\right),
\end{align*}
where $d_i$ is the degree of $\Gr_i=(\X,A_i)$, $i=1, \ldots,n$. Assume again that $d_1\le\cdots\le d_n$. All the coefficients are mutually distinct except that it could happen that $(d_{i_0}+i_0)=\lceil n/2\rceil$ for some $i_0<\lceil n/2\rceil$. Apart from this detail, the above formula forms the spectral decomposition of $D_{A'}$, so by Proposition~\ref{prop:eigenvalues}, we have that $P_i':=\I\otimes P_i$ is an intertwiner of $U$ for each $i\notin \{ 0,i_0\}$ and, consequently, $P_0'+P_{i_0}'$ is an intertwiner as well. We would like to show that $P_0'$ and $P_{i_0}'$ are separate intertwiners.

Notice that $\lceil n/2\rceil$ is connected to $0$, but not to $i_0$ in $\Hr$, so $P_{\lceil n/2\rceil}B(P_0+P_{i_0})=E_{\lceil n/2\rceil,0}$. Consequently, we have an intertwiner $P_{\lceil n/2\rceil}'A'(P_0'+P_{i_0}')=\I_{\ell^2(\X)}\otimes E_{\lceil n/2\rceil,0}$. Composing this intertwiner with its adjoint, we get the desired $P_0'$.

So, we achieved that $P_i'$ is an intertwiner for every $i=0,1,\dots,n$. Hence, ranges of these projections define invariant subspaces of $U$, so $U=\bigoplus_{i=0}^n U_{(i)}$.

Writing down the intertwiner relation for $A_i':=P_i'A'P_i'=\I_{\ell^2(\X)}\otimes A_i$, $i\neq 0$, we get $A_iU_{(i)}=U_{(i)}A_i$. So, each $U_{(i)}$ is a representation of $\Qut(\X, A_i)$.

Finally, take $i, j= 1, \ldots, n$, $i\neq j$ and study the intertwiner $P_i'A'P_j'=\I_{\ell^2(\X)}\otimes P_iBP_j$. Whenever there is an edge $j\to i$ in $\Hr$, this implies the relation $U_{(j)}=U_{(i)}$. Since the vertex $n$ is connected to everything in $\Hr$, this implies that all $U_{(i)}$ are equal, establishing $\Qut(\X', A') \subseteq \Qut(\X)\cap\Qut(\X, A_1)\cap\cdots\cap\Qut(\X,A_n)$

For the opposite inclusion, denote by $V$ the fundamental representation of $\bigcap_{i=1}^n\Qut(\X, A_i)$.  We need to show that $A'$ is an intertwiner of $U=V^{\oplus (n+1)}$. By definition, all $A_i$'s are intertwiners of $V$. Since the latter is a unitary representation, also the operators $A_i^{\ast}$ are intertwiners. Since $V^{\oplus (n+1)}$ is the tensor product of $V$ with the trivial representation on $\mathbb{C}^{n+1}$, any map on $\mathbb{C}^{n+1}$ is an intertwiner. It follows that $A'$ is an intertwiner of $V^{\oplus (n+1)}$. This implies that $\Qut(\X', A') =\Qut(\X)\cap\Qut(\X, A_1)\cap\cdots\cap\Qut(\X,A_n)$, which is what we wanted to prove.
\end{proof}

\subsection{Undirected version}

We will finally show that we can also obtain an undirected quantum graph with similar properties. Again, we briefly explain our construction classically. We use the same notation as in the previous subsection.

\begin{prop}
\label{P.classical22}
 Let $ N, n \in \N$ and let $\{\Gr_i\}_{i=1}^n$ be a set of directed regular graphs without loops on $N$ vertices . Then there exists an undirected graph $\Gr$ such that $\Aut \Gr=\bigcap_{i=1}^n \Aut \Gr_i$.
\end{prop}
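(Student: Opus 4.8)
The plan is to mimic the strategy of the directed version (Proposition~\ref{P.classical21}), but to arrange that every edge we add is symmetric, so the resulting graph $\Gr$ is undirected. The key difficulty is that the input graphs $\Gr_i$ are only \emph{directed} regular graphs, so simply replacing each vertex by a copy of $\Hr$ and copying the directed edges of $\Gr_i$ between layers would produce a directed graph. The remedy is to build a larger ``gadget'' that encodes each directed edge $v\to w$ of $\Gr_i$ by an undirected path through auxiliary vertices, in such a way that the orientation is recoverable from the combinatorial structure, while the global graph itself remains undirected.

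Concretely, first I would again split the vertex set, writing $X$ for the common vertex set of the $\Gr_i$ and $d_i$ for the regularity degree of $\Gr_i$, assuming $d_1 \le \cdots \le d_n$. I would keep the auxiliary graph $\Hr$ on $\{0,1,\dots,n\}$ exactly as before, whose purpose is to let the degree (together with one extra adjacency test at the vertex $\lceil n/2\rceil$) detect the layer index $i$ of each copied vertex $(v,i)$. The new ingredient is a symmetric replacement for the directed edges: for each ordered edge $v \to w$ in $\Gr_i$ I would introduce a short undirected gadget (for instance a path of a fixed small length, or a pair of auxiliary vertices carrying distinguished degrees) joining $(v,i)$ to $(w,i)$, designed so that the two endpoints of the gadget are distinguishable from one another by some local invariant (such as degree, or adjacency to the layer structure). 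This asymmetry of the gadget's endpoints is what lets an automorphism reconstruct the direction $v\to w$ even though all edges are now undirected.

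The verification then proceeds in two inclusions, just as in the directed case. For $\Gr \supseteq \bigcap_i \Aut\Gr_i$, any common automorphism $g$ acts by $(v,i)\mapsto (g(v),i)$ and permutes each gadget according to how $g$ maps the corresponding directed edge, which is an automorphism of $\Gr$ since $g$ preserves each $\Gr_i$. For the reverse inclusion, I would argue that any automorphism of $\Gr$ must first preserve the $\Hr$-layers (using the degree argument and the $\lceil n/2\rceil$ adjacency test exactly as before, now also accounting for the degree contributions of the gadget vertices), hence preserves each layer index $i$; then, because the gadget endpoints are locally distinguishable, the automorphism preserves the orientation of each copied edge, so its restriction to layer $i$ is an automorphism of $\Gr_i$; and finally the ``unique neighbour along the $n$-th layer'' argument forces all the layer-components to agree, giving an element of $\bigcap_i \Aut\Gr_i$.

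The main obstacle I anticipate is designing the undirected gadget so that it simultaneously (a) renders the two endpoints combinatorially distinguishable, so that orientation is recoverable, and (b) does not create any spurious symmetries, i.e.\ contributes no new automorphisms of its own and does not accidentally identify vertices across different layers or different edges through degree coincidences. One must also recheck the degree bookkeeping in $\Hr$, since the auxiliary gadget vertices alter the degrees of the layer vertices $(v,i)$ and of the gadget vertices themselves; the degree-based separation of layers must still hold after these contributions are added, which may require choosing the gadget size large enough (growing with $n$) to keep all the relevant degrees distinct. This is precisely the place where the quantisation in the subsequent theorem will be delicate, since ``replacing a directed quantum edge by an undirected gadget'' has no obvious quantum analogue; but at the classical level it is a finite, checkable combinatorial design.
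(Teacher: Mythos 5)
Your plan is viable in principle, but as written it has a genuine gap at its centre: the undirected gadget that is supposed to encode the orientation of each edge $v\to w$ is never actually constructed. You describe the properties it must have (endpoints combinatorially distinguishable, no spurious automorphisms, no degree collisions with the layer vertices), and you correctly identify that verifying these properties is the entire difficulty --- but the proof of the proposition \emph{is} the design and verification of such a gadget, so deferring it leaves nothing proved. Note also that your approach is essentially Frucht's original edge-replacement trick (the asymmetric towers pictured at the start of Section~\ref{sec.step2}), which does work classically, but which the paper explicitly sets aside precisely because replacing edges by gadgets is the step that resists quantisation. A further point to recheck if you pursue this route: since each $\Gr_i$ is $d_i$-regular in both the in- and out-sense, the vertex $(v,i)$ is an endpoint of $2d_i$ gadgets (as source of $d_i$ and target of $d_i$), not $d_i$ directed edges, so the degree bookkeeping that separates the layers of $\Hr$ must be redone, and the gadget vertices themselves must be kept degree-distinguishable from all layer vertices.

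The paper's proof avoids gadgets entirely and is worth comparing. It enlarges the auxiliary graph to $\tilde{\Hr}$ on $2n+1$ vertices ($i,j>0$ adjacent iff $i+j>2n$, and $0$ adjacent to all $i>n$), assigns to each input graph $\Gr_i$ the \emph{pair} of layers $2i-1$ and $2i$, and encodes a directed edge $v\to w$ of $\Gr_i$ as the single undirected edge $(v,2i-1)-(w,2i)$. The orientation is then recovered not from any local gadget but from the layer index of each endpoint: layers $2i-1$ and $2i$ have different degrees in $\tilde{\Hr}$, so any automorphism preserves them, and hence knows which endpoint is the source and which is the target. This buys two things over your proposal: the vertex count stays at $(2n+1)N$ with no per-edge blow-up, and --- crucially for Theorem~\ref{T.q22} --- the adjacency matrix takes the clean form $\I\otimes B+\sum_i(A_i\otimes E_{2i-1,2i}+A_i^*\otimes E_{2i,2i-1})$, which makes sense verbatim for quantum adjacency matrices, whereas an edge gadget has no quantum analogue.
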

\begin{proof}
In this case, we replace each point in $V$ by a graph $\tilde{\Hr}$ (an analogue of $\Hr$ of the previous subsection) with $2n+1$ vertices instead of just $n+1$. Thus $i,j>0$ are connected if and only if $i+j>2n$ and $0$ is connected to all $i>n$. The degree of $0$ is hence equal to $n$.

The graph $\Gr$ is constructed as in the proof of Proposition \ref{P.classical21}: we put an edge $(v,2i-1)-(w,2i)$ if there is an edge $v\to w$ in $\Gr_i$. The proof that $\Aut \Gr=\bigcap_{i=1}^n \Aut \Gr_i$ is then similar as in the directed version.
\end{proof}

Again, a quantum version follows a similar pattern.

\begin{thm}
\label{T.q22}
    Let $\X$ be a finite quantum set, let $n \in \N$ and let $\{\Gr_i\}_{i=1}^n$, $\Gr_i=(\X,A_i)$ be a set of regular quantum graphs without loops on $\X$. Then there exists an undirected  quantum graph $G=(\X',A')$ such that $\Qut (\Gr)=\bigcap_{i=1}^n\Qut(\Gr_i)$.
\end{thm}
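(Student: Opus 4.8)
The plan is to mirror the proof of Theorem~\ref{thm:directed-frucht}, but to encode the direction of each $A_i$ using a \emph{pair} of levels instead of a single one. Following Proposition~\ref{P.classical22}, let $X_{2n}=\{0,1,\dots,2n\}$, let $\tilde{\Hr}$ be the classical graph on $X_{2n}$ in which $i,j>0$ are joined iff $i+j>2n$ and $0$ is joined to every $i>n$, and let $B$ be its adjacency matrix; recall that then each $i\in\{1,\dots,2n\}$ has degree $i$ while $0$ has degree $n$. I would set $C(\X'):=C(\X)\otimes C(X_{2n})$ and
\[
A':=\I_{\ell^2(\X)}\otimes B+\sum_{i=1}^n\bigl(A_i\otimes E_{2i,2i-1}+A_i^*\otimes E_{2i-1,2i}\bigr),
\]
where $E_{jk}$ are the matrix units on $\ell^2(X_{2n})\approx\C^{2n+1}$. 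The whole point of pairing $A_i\otimes E_{2i,2i-1}$ with $A_i^*\otimes E_{2i-1,2i}$ is that these two summands are mutually adjoint, so $A'=A'^*$ and the resulting quantum graph is undirected. That $A'$ is a genuine loopless quantum adjacency matrix, i.e.\ $A'=\bar{A'}$, $A'\bullet A'=A'$ and $A'\bullet\I_{\ell^2(\X')}=0$, follows exactly as in Theorem~\ref{thm:directed-frucht}: the summands are real, mutually Schur-orthogonal (using $A_i\bullet\I=0$ and $E_{jk}\bullet E_{j'k'}=\delta_{jj'}\delta_{kk'}E_{jk}$), and individually Schur-idempotent.

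The easy inclusion $\Qut(\X',A')\supseteq\bigcap_{i=1}^n\Qut(\X,A_i)$ is verbatim the directed case: if $V$ is the fundamental representation of the intersection, then each $A_i$, hence each $A_i^*$, intertwines $V$, while $B$ and the $E_{jk}$ act only on the trivial $\C^{2n+1}$-factor of $V^{\oplus(2n+1)}$, so $A'$ intertwines $V^{\oplus(2n+1)}$. For the reverse inclusion I would first reduce to a generic situation: edgeless $\Gr_i$ impose no constraint and can be dropped (if all are edgeless, take $\Gr=(\X,0)$), so I may assume each degree $d_i\ge1$; and repeating a graph changes neither $\bigcap_i\Qut(\X,A_i)$ nor the claim, so I may assume $n\ge2$. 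Computing the degree operator as in the directed proof yields
\[
D_{A'}=\I_{\ell^2(\X)}\otimes\Bigl(nP_0+\sum_{i=1}^n(2i-1+d_i)P_{2i-1}+\sum_{i=1}^n(2i+d_i)P_{2i}\Bigr),
\]
whose nonzero-level coefficients are pairwise distinct (they form the strictly increasing sequence $2i-1+d_i<2i+d_i$ as $i$ grows, using $d_1\le\cdots\le d_n$), the only possible coincidence being that the level-$0$ coefficient $n$ equals the coefficient of a single level $k_0$, which is then forced to satisfy $k_0\le n-1$. By Proposition~\ref{prop:eigenvalues} every spectral projection $P_k':=\I\otimes P_k$ with a non-repeated eigenvalue intertwines the fundamental representation $U$, and $P_0'+P_{k_0}'$ does too. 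To split the latter I would use that the vertex $n+1$ is joined to $0$ but not to $k_0$ (since $k_0\le n-1$): one checks $P_{n+1}'A'(P_0'+P_{k_0}')=\I\otimes E_{n+1,0}$, and composing with its adjoint extracts $P_0'$. Hence every $P_k'$ is an intertwiner and $U=\bigoplus_{k=0}^{2n}U_{(k)}$.

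It remains to identify the $U_{(k)}$ and to recover the $A_i$, and this is the step I expect to be the main obstacle. The hub vertex $2n$ is joined to every other vertex of $\tilde{\Hr}$, and among those edges the only \emph{special} one (of the form $(2i-1,2i)$) is $(2n,2n-1)$; each remaining edge $(2n,k)$ gives the scalar intertwiner $P_{2n}'A'P_k'=\I\otimes E_{2n,k}$, forcing $U_{(k)}=U_{(2n)}=:W$ for all $k\ne 2n-1$, while the non-special edge $(2n-1,2n-2)$ (present precisely because $n\ge2$) then gives $U_{(2n-1)}=W$ as well. Finally, for each $i$ the intertwiner
\[
P_{2i}'A'P_{2i-1}'=(A_i+c_i\I)\otimes E_{2i,2i-1},\qquad c_i:=B_{2i,2i-1}\in\{0,1\},
\]
combined with $U_{(2i-1)}=U_{(2i)}=W$, yields $(A_i+c_i\I)W=W(A_i+c_i\I)$; the scalar $c_i\I$ cancels, so $A_i\in\Hom(W,W)$ and $W$ is a representation of each $\Qut(\X,A_i)$. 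Since $U=W^{\oplus(2n+1)}$, this gives $\Qut(\X',A')\subseteq\bigcap_i\Qut(\X,A_i)$, and together with the easy inclusion the proof is complete.

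The genuinely delicate points, which is where I would spend the most care, are twofold. First, the vertices $0$ and $n$ of $\tilde{\Hr}$ are twins (equal neighbourhood $\{n+1,\dots,2n\}$ and equal degree $n$), so they cannot be separated by degree within $\tilde{\Hr}$ alone; it is exactly the reduction $d_i\ge1$ that pushes the level-$n$ coefficient above $n$ and prevents a fatal collision there, confining the remaining collision to some $k_0\le n-1$ that the vertex $n+1$ can resolve. Second, the reduction $n\ge2$ is what guarantees the otherwise leaf-like level $2n-1$ is linked to the hub by a non-special edge, so that all half-levels carry the common representation $W$; once that is established, the harmless cancellation of the intra-copy term $c_i\I$ is what makes the recovery of each $A_i$ as an intertwiner of $W$ painless.
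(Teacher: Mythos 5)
Your proof is correct and follows essentially the same route as the paper's: the same auxiliary graph $\tilde{\Hr}$ on $2n+1$ levels, the same adjacency matrix $A'$ (up to a harmless transposition of which of $A_i$, $A_i^*$ sits on $E_{2i-1,2i}$), the same degree-operator/spectral-projection analysis, and the same linking of the blocks $U_{(k)}$ through the hub vertex. If anything you are more careful than the paper at a few points — using the vertex $n+1$ (rather than $n$, which is a twin of $0$) to split $P_0'+P_{k_0}'$, reducing to $d_i\ge 1$ and $n\ge 2$, and keeping track of the scalar term $c_i\I$ in $P_{2i}'A'P_{2i-1}'$ — so no changes are needed.
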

\begin{proof}
We take the classical graph $\tilde{\Hr}$ as above, defined on the set $X_{2n} = \{0,1,\ldots, 2n\}$ and denote by $B$ the adjacency matrix of $\tilde{\Hr}$. Set $C(\X'):=C(\X)\otimes C(X_{2n})$ (of course, $C(X_{2n})\simeq\C^{2n+1}$), and
$$A':=\I_{\ell^2(\X)}\otimes B+\sum_{i=1}^n (A_i\otimes E_{2i-1,2i}+A_i^*\otimes E_{2i,2i-1}),$$
where this time  $E_{i,j} \in B(\ell^2(X_{2n}))$ denote the matrix units and $P_i = E_{i,i}$ is the projection on the $i$-th vertex in $\ell^2(X_{2n})$ ($i,j=0,1,\ldots, 2n$).

The verification that $A'$ defines a quantum graph is similar to the directed case in Theorem \ref{thm:directed-frucht}: 
\begin{itemize}
\item $A'=\bar{A'}$ -- follows from the fact that this holds for every summand;
\item $A'\bullet A'=A'$ -- follows from the fact that the summands are mutually orthogonal Schur-projections;
\item $A'\bullet \I_{\ell^2(\X')}=0$ -- this holds since $B\bullet \I_{\ell^2(X_{2n})}=0$ and $E_{ij}\bullet \I_{\ell^2(X_{2n})}=0$ if $i\neq j$.
\item $A'=A'^*$ -- since $B=B^*$ and $(A_i\otimes E_{2i-1,2i})^*=(A_i^*\otimes E_{2i,2i-1})$
\end{itemize}

Now, let us study $\Qut(\X', A')$. Denote by $U$ its fundamental representation.

Recall that in $\tilde{\Hr}$, each vertex $i\neq 0$ has degree $i$ and the vertex 0 has degree $n$, so $D_B=n P_0+\sum_{i=1}^n iP_i$. Now, we can compute
\begin{align*}
D_{A'}&=\I_{\ell^2(\X)}\otimes D_B+\sum_{i=1}^n d_i\,\I\otimes P_i=\I_{\ell^2(\X)}\otimes\left(D_B+\sum_{i=1}^{n} d_i(P_{2i-1}+P_{2i})\right)\\
&=\I_{\ell^2(\X)}\otimes\left(n P_0+\sum_{i=1}^n d_i((2i-1)P_{2i-1}+2iP_{2i})\right),
\end{align*}
where $d_i$ is the degree of $\Gr_i=(\X_i,A_i)$. All the coefficients are mutually distinct except that it could happen for some $k_0=2i_0$ or $k_0=2i_0-1$, $k_0<n$ that $(d_{i_0}+k_0)=n$. Nevertheless, we have that $P_k':=\I_{\ell^2(\X)}\otimes P_k$ is an intertwiner of $U$ for each $k\notin \{0,k_0\}$ and, consequently, $P_0'+P_{k_0}'$ is an intertwiner as well. We would like to show that $P_0'$ and $P_{k_0}'$ are separate intertwiners.

Notice that $n$ is connected to $0$, but not to $k_0$ in $\mathcal H$, so $P_nB(P_0+P_{k_0})=E_{n0}$. Consequently, we have an intertwiner $P_{n}'A'(P_0'+P_{k_0}')=\I_{\ell^2(\X)}\otimes E_{n0}$. Composing that with its adjoint, we get the desired $P_0'$.

So, we achieved that $P_k'$ is an intertwiner for every $k=0,1,\dots,2n$. Hence the ranges of these projections are invariant subspaces of $U$, so $U=\bigoplus_{i=0}^{2n} U_{(i)}$. Now, we would like to show that all $U_{(i)}$ coincide. Take any $0\le k\le 2n-2$ and $l=2n$ or $l=2n-1$. Then $P_k'A'P_l'=\I_{\ell^2(\X)}\otimes E_{kl}=E_{kl}'$ as $B_{kl}=1$ for all the choices considered. The intertwiner relation $E_{kl}'A'=A'E_{kl}'$ then exactly means that $U_{(l)}=U_{(k)}$, which is all we need: we proved that $U=V^{\oplus (2n+1)}$ for some unitary matrix $V$.

Finally, taking any $i\in\{1,\ldots,n\}$, we can compute $P_{2i-1}'A'P_{2i}=A_i\otimes E_{2i-1,2i}=:A_i'$. The intertwiner relation $UA_i'=A_i'U$ is then equivalent to the equality $VA_i=A_iV$, which is the final thing we needed to get $\Qut(\X', A')\subset\bigcap_{i=1}^n\Qut(\X, A_i)$.

For the opposite inclusion, denote by $V$ the fundamental representation of $\bigcap_{i=1}^n\Qut(\X, A_i)$. We need to show that $A'$ is an intertwiner of $U=V^{\oplus (2n+1)}$. By definition, all $A_i$'s are intertwiners of $V$. Since the latter is a unitary representation, also the operators $A_i^{\ast}$ are intertwiners. Since $V^{\oplus (2n+1)}$ is the tensor product of $V$ with the trivial representation on $\mathbb{C}^{2n+1}$, any map on $\mathbb{C}^{2n+1}$ is an intertwiner. It follows that $A'$ is an intertwiner of $V^{\oplus (2n+1)}$.  
\end{proof}

We now have all the tools in place to establish the quantum Frucht's theorem.  

\begin{thm}[Quantum Frucht's Theorem]\label{thm:qF}
Let $\G$ be a finite quantum group.  Then there is an undirected finite quantum graph $\Gr$ such that $\G \cong \Qut(\Gr)$.  That is, $\G$ acts faithfully on $\Gr$, and the canonical quotient map $C(\Qut(\Gr)) \to C(\G)$ is an isomorphism of Hopf *-algebras.
\end{thm}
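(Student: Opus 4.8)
The plan is to assemble the two halves of the machinery developed above. Proposition~\ref{P.quantum1} produces a finite family of loopless quantum Cayley graphs on $\G$ whose quantum automorphism groups intersect in $\G$, and Theorem~\ref{T.q22} merges such a family into a single \emph{undirected} quantum graph without altering that intersection. So the whole argument is essentially ``apply Proposition~\ref{P.quantum1}, then feed its output into Theorem~\ref{T.q22}'', with one hypothesis to verify in between.

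Concretely, I would first invoke Proposition~\ref{P.quantum1} to obtain orthogonal projections $\{P_i\}_{i=1}^{k}\subset C(\G)$ with $\epsilon(P_i)=0$, so that the quantum Cayley graphs $(\G,A_i)$ with $A_i=P_i\,\star$ are loopless and satisfy $\G\cong\bigcap_{i=1}^{k}\Qut(\G,A_i)$ inside $\Qut(\G,\psi)$. The one point needing a short check is that each $(\G,A_i)$ is \emph{regular}, this being the only hypothesis of Theorem~\ref{T.q22} not already in hand. Writing $A_i=L_{\psi_{S(P_i)}}$ and using $\Delta(1)=1\otimes 1$, one computes $A_i\eta=\psi(S(P_i))\eta$; slicing the Haar invariance $(\id\otimes\psi)\Delta=\psi(\cdot)1$ gives likewise $A_i^{*}\eta=\psi(S(P_i))\eta$. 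Since $\G$ is of Kac type we have $\psi\circ S=\psi$, so both equal $d_i\eta$ with $d_i=\psi(P_i)\ge 0$. Hence each $\Gr_i=(\G,A_i)$ is a $d_i$-regular loopless quantum graph on the quantum set $\X=\G$.

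Next I would apply Theorem~\ref{T.q22} to the family $\{\Gr_i\}_{i=1}^{k}$ with $\X=\G$. This yields an undirected quantum graph $\Gr=(\X',A')$ with $C(\X')=C(\G)\otimes C(X_{2k})$ and
\[
\Qut(\Gr)=\bigcap_{i=1}^{k}\Qut(\G,A_i)\cong\G,
\]
the last identification coming from Proposition~\ref{P.quantum1} (note both intersections are taken among quantum subgroups of $\Qut(\G)$, so they coincide verbatim). To recover the description of the canonical quotient in the statement, I would observe that the coproduct action of $\G$ on $C(\G)$, extended by the trivial action on the $C(X_{2k})$ factor, preserves $A'$: it preserves each $A_i$ by Lemma~\ref{lem:G as a subgroup} and commutes with the scalar matrices $B$ and $E_{ij}$ on $C(X_{2k})$. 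Universality of $\Qut(\Gr)$ then furnishes the canonical surjection $C(\Qut(\Gr))\to C(\G)$, and the isomorphism $\Qut(\Gr)\cong\G$ above forces it to be an isomorphism of Hopf $*$-algebras.

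At this stage there is essentially no remaining obstacle, since all of the substantive work has been front-loaded into Proposition~\ref{P.quantum1} and Theorem~\ref{T.q22}. The only delicate points are genuinely routine: the regularity computation that licenses the use of Theorem~\ref{T.q22}, and the bookkeeping that the intersection $\bigcap_i\Qut(\G,A_i)$ appearing there is literally the same quantum subgroup of $\Qut(\G)$ as the one appearing in Proposition~\ref{P.quantum1}.
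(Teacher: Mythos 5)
Your proposal is correct and follows exactly the paper's route: the paper's own proof of Theorem~\ref{thm:qF} is literally ``Combine Proposition~\ref{P.quantum1} and Theorem~\ref{T.q22}.'' The only extra content you supply is the explicit verification that quantum Cayley graphs are regular (so that Theorem~\ref{T.q22} applies), a point the paper handles by the remark at the start of Section~\ref{sec.step2} that Cayley graphs are always regular; your computation of $A_i\eta=A_i^{*}\eta=\psi(P_i)\eta$ is a correct and welcome filling-in of that detail.
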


\begin{proof} Combine Proposition \ref{P.quantum1} and 
Theorem \ref{T.q22}.
\end{proof}
\begin{remark}
We also reprove the weak quantum Frucht's theorem from \cite{DBRS24}, since if we start from a classical group $\Gamma$, then our construction yields a classical graph, whose quantum automorphism group is isomorphic to $\Gamma$.
\end{remark}
\begin{remark}
The proof of Theorem \ref{thm:qF} can be extended to general discrete quantum groups $\G$. Recall that quantum automorphism groups of connected, locally finite quantum graphs have been defined in \cite{Rol25}. We would have to alter our construction in the following way: we have an infinite family of quantum adjacency matrices, so we built an auxiliary infinite graph with all degrees of vertices different, so our algebra of functions would be $\ell^{\infty}(\G) \overline{\otimes} \ell^{\infty}(\mathbb{N})$. The formula for the quantum adjacency matrix would still be $A':=\I_{\ell^2(\X)}\otimes B+\sum_{i=1}^{\infty} (A_i\otimes E_{2i-1,2i}+A_i^*\otimes E_{2i,2i-1})$, which is well-defined on finitely supported elements and actually preserves this subspace, i.e. it is a locally finite quantum adjacency matrix (see \cite[Definition 3.24]{Was24}, \cite[Definition 4.1.1]{Rol25}). Since the action of the quantum automorphism group can be defined on the finitely supported elements (see \cite[Proposition 3.3.15]{Rol25}) and then extended to the multiplier algebra, and it commutes with the quantum adjacency matrix $A'$, it follows that the degree matrix $D_{A'}$ is an intertwiner, even though it can be unbounded. One can finish the proof exactly as in the finite case.

If $\G$ is finitely generated, i.e. $\widehat{\G}$ admits a finite generating set of representations, then we can keep the auxiliary classical graph finite, because it will be sufficient to choose the quantum adjacency matrices coming from the generating set. Then the quantum graph with quantum automorphism group $\G$ will be of bounded degree (see \cite[Definition 3.22]{Was24}).
\end{remark}

\section{Quantum Cayley graphs with few quantum automorphisms -- duals of classical groups} \label{sec:rigid-calyey}

In this section we will investigate whether it is possible to prove Frucht's theorem without the decolouring, namely by finding a quantum Cayley graph whose quantum automorphism group is the original quantum group. This problem has been studied by many authors and it has been conjectured by Watkins in \cite{Wat76} that it is possible for most countable groups, except for a large class of abelian groups, for generalized dicyclic groups and for finitely many other exceptions. This conjecture has been proved, first for finite groups, but also for finitely generated infinite groups fairly recently (see \cite{LdlS}). We will study this problem for finite quantum groups. We will show that for any non-abelian group $\Gamma$ one can find a quantum Cayley graph on its dual $\G:=\widehat{\Gamma}$ (so that $\cO(\G)$ is the group algebra of $\Gamma$) such that the quantum automorphism group is equal to $\G$. It will allow us to construct many examples of quantum graphs for which there is a huge difference between the classical automorphism group and the quantum automorphism group.

Let $\Gamma$ be a finite group, and let $\G=\widehat{\Gamma}$. A quantum Cayley graph on $\G$ is then determined by a projection $P$ in $C(\G) = \C[\Gamma]$. Consider the \emph{Fourier transform} of $P$, denoted $T:=\mathcal F(P)\in C(\Gamma)$ (note that in this case the Fourier transform takes a very simple form: if $P=\sum_{g\in \Gamma} c_g\lambda_g$ for some scalars $c_g\in \C$, then $T(g) = c_g$ for each $g \in \Gamma$). The associated quantum adjacency matrix $A_P$ is then given by the Herz-Schur multiplier
\[ m_T(\lambda_g) = T(g) \lambda_g, \;\;\; g \in \Gamma.\]
Thus computing the relevant quantum symmetry group amounts (see also Proposition \ref{prop:genformGamma} below) to determining the conditions on the operator-valued unitary matrix $(a_{g,h})_{g, h \in \Gamma} \in M_{|\Gamma|}(C)$ (over some C*-algebra $C$) which would assure that the formula
\[ \alpha(\lambda_h):= \sum_{g \in \Gamma} \lambda_g \otimes a_{g,h}, \;\;\; h\in \Gamma, \]
defines a trace preserving morphism $\alpha:\C[\Gamma] \to \C[\Gamma] \otimes C$, commuting with the multiplier $m_T$ in following sense: $\alpha \circ m_{T} = (m_{T}\otimes \id)\circ \alpha$. The latter condition translates into the requirement that
\[ a_{g,h} = 0 \;\;\textup{ whenever } \;\;g, h \in \Gamma, \; T(g) \neq T(h),\]
so it depends only on the level sets of $T$ (and not on its actual values). 
This means that the computation of the relevant quantum symmetry group follows the same general scheme as the computations of quantum isometry groups of group duals, as initiated in \cite{BhS10} (where $T$ is generally the length function for a given generating set).

In particular, if $T$ is one-to-one, then the quantum automorphism group is exactly $\G$. Moreover, the Fourier transform of a rank one projection $|\xi\rangle\langle \xi|$ inside a block of $\mathbb{C}[\Gamma]$ corresponding to an irreducible representation $\pi$ of $\Gamma$ is (up to normalization) the matrix coefficient $g \mapsto \langle \xi, \pi(g) \xi\rangle$. 

\begin{prop}\label{prop:faithfulirrep}
Let $\Gamma$ be a finite group and suppose that $\pi$ is a faithful, irreducible representation of $\Gamma$ on $\HH$. Then there exists $\xi \in \HH$  such that the matrix coefficient $g\mapsto \langle \xi, \pi(g) \xi\rangle$ separates the points of $\Gamma$; note that  the rank one projection $|\xi\rangle\langle \xi|$ belongs to $\C[\Gamma]$.
\end{prop}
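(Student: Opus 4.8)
The plan is to reduce the statement to a genericity (general-position) argument: I would show that for all but a measure-zero set of vectors $\xi \in \HH$ the matrix coefficient $c_\xi(g) := \langle \xi, \pi(g)\xi\rangle$ is injective on $\Gamma$, by exhibiting the exceptional set as a finite union of proper real-algebraic subvarieties of $\HH$, which therefore cannot exhaust $\HH$.

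First I would reformulate the separation requirement pairwise. Saying that $c_\xi$ separates points of $\Gamma$ is the same as saying that for every ordered pair $(g,h)$ with $g \neq h$ one has
\[ f_{g,h}(\xi) := \langle \xi, (\pi(g) - \pi(h))\xi\rangle \neq 0. \]
For each such pair set $Z_{g,h} := f_{g,h}^{-1}(0) \subseteq \HH$; the goal is then to produce a unit vector $\xi$ lying outside the finite union $\bigcup_{g \neq h} Z_{g,h}$, after which a rescaling and a reference to the Wedderburn decomposition finishes the argument.

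The key step, and the only place where the hypotheses really enter, is to verify that each $f_{g,h}$ is not identically zero. Here I would invoke the standard fact that over a complex Hilbert space a bounded operator $M$ with $\langle\xi, M\xi\rangle = 0$ for all $\xi$ must vanish (polarisation: replacing $\xi$ by $\xi + \eta$ and then by $\xi + i\eta$ recovers $\langle\xi, M\eta\rangle = 0$ for all $\xi, \eta$). Applying this with $M = \pi(g) - \pi(h)$, and using that $\pi$ is faithful --- so $\pi(g) = \pi(h)$ forces $gh^{-1} \in \ker\pi = \{e\}$, i.e.\ $g = h$ --- I conclude that $M \neq 0$ whenever $g \neq h$, hence $f_{g,h} \not\equiv 0$. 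I want to stress that this is exactly where the complex scalars are indispensable: over $\mathbb{R}$ a nonzero skew form has vanishing diagonal, and the proposition would fail.

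With non-vanishing in hand the rest is routine. Writing $\HH \cong \mathbb{R}^{2\dim\HH}$ in real coordinates, $f_{g,h}$ becomes a nonzero complex-valued quadratic polynomial, so $Z_{g,h}$ is contained in the zero set of a nontrivial real polynomial and therefore has Lebesgue measure zero (equivalently, its complement is a nonempty Zariski-open set, exactly as in the arguments of \lemref{lem:nonzerocoeff}). Since $\Gamma$ is finite there are only finitely many pairs, so $\bigcup_{g \neq h} Z_{g,h}$ still has measure zero and its complement is dense; I pick $\xi$ there and rescale it to a unit vector. Finally, because $\pi$ is irreducible, Burnside's theorem gives $\pi(\C[\Gamma]) = B(\HH)$, so the minimal projection that equals $|\xi\rangle\langle\xi|$ on the $\pi$-block and $0$ on every other block of $\C[\Gamma] \cong \bigoplus_\rho M_{\dim\rho}$ is a genuine element of $\C[\Gamma]$, as claimed; its Fourier transform is, up to normalisation, the injective coefficient $c_\xi$. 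The main obstacle is thus conceptual rather than computational: it is the observation that faithfulness together with the complex structure forces each quadratic form $f_{g,h}$ to be nonzero, after which the general-position principle does the remaining work.
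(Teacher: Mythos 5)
Your argument is correct and is essentially identical to the paper's proof of Proposition~\ref{prop:faithfulirrep}: both reduce separation to the pairwise non-vanishing of the quadratic forms $\xi \mapsto \langle \xi, (\pi(g)-\pi(h))\xi\rangle$, use polarisation over $\C$ together with faithfulness to see each form is not identically zero, and then take the (finite) intersection of the resulting Zariski-open dense sets. The only difference is cosmetic phrasing (measure zero versus nowhere dense) plus your added justification via Burnside's theorem that $|\xi\rangle\langle\xi|$ lies in $\C[\Gamma]$, which the paper merely asserts.
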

\begin{proof}
Fix a pair $g,h \in \Gamma$ such that $g\neq h$. The set $\HH_{g,h}:=\{\xi \in \HH: \langle \xi, (\pi(g) - \pi(h)) \xi\rangle =0\}$ is Zariski closed (as a zero set of a quadratic function), so its complement is open and dense as soon as it is non-trivial. As $\pi$ is faithful, $\pi(g) - \pi(h)\neq 0$, so for some $\xi \in \HH$ we have $\langle \xi, (\pi(g) - \pi(h))\xi\rangle \neq 0$ by the polarisation identity. It follows that the intersection $\K:=\bigcap_{g\neq h} \HH \setminus \HH_{g,h}$ is an open, dense subset of $\HH$ and for every $\xi \in \K$ the matrix coefficient function $ g \mapsto \langle \xi, \pi(g) \xi\rangle$ separates the points of $\Gamma$.
\end{proof}
This proposition together with Proposition \ref{prop:fullalgebra} can already be applied to some classes of groups, such as the permutation groups, dihedral groups, and all finite simple groups (as all non-trivial representations of simple groups are faithful).  This result should be contrasted with the work of Banica--McCarthy \cite{BM22}, which shows for example that $\widehat S_3$ cannot be realized as $\Qut(\Gr)$ for any {\it classical} graph $\Gr$.   

We will now prove the main result for general non-abelian groups $\Gamma$. In general, we will not obtain multipliers that separate all points of $\Gamma$, but they will separate sufficiently many points to yield the desired conclusion. The result will utilize the fact that the elements $a_{g,h}$ are not independent, because we want $\alpha: \C[\Gamma] \to \C[\Gamma] \otimes C$ to be a $\ast$-homomorphism. On the one hand, we have
\[
\alpha(\lambda_{h_1} \lambda_{h_2}) = \sum_{g_1, g_2} \lambda_{g_1 g_2} \otimes a_{g_1, h_1} a_{g_2, h_2}.
\]
On the other hand, this expression is equal to 
\[
\alpha(\lambda_{h_1 h_2}) = \sum_{g} \lambda_g \otimes a_{g, h_1 h_2}.
\]
If we rearrange the first expression and recall that the $\lambda_g$'s are linearly independent, it follows that
\[
a_{g, h_1 h_2} = \sum_{g_1} a_{g_1, h_1} a_{g_1^{-1}g, h_2}.
\]

Let us now consider the case of a general non-abelian group $\Gamma$. We will take a look at the representation $\pi:=\bigoplus_{k=1}^l \alpha_k$, defined as the direct sum of all irreducible representations that are not one-dimensional (i.e.\ not characters); their carrier Hilbert spaces are $\mathbb{C}^{n_{k}}$, with unit spheres $S^{n_{k}-1}$. In each of the Hilbert spaces we will pick a unit vector $\xi_k$, giving us a rank one projection. The Fourier transform of the sum of the relevant projections will be equal to $T_{\bm{\xi}}(g) := \sum_{k} \langle \xi_k, \alpha_k(g) \xi_k\rangle$. For each pair $g,h \in \Gamma$, say $g \neq h$, we will consider the algebraic set $V_{g,h}^{\bm{\xi}}:= \{\bm{\xi} \in \prod_{k=1}^l S^{n_{k}-1}: T_{\bm{\xi}}(g) = T_{\bm{\xi}}(h)\}$. It is an algebraic subset of an irreducible variety (a product of spheres, we are working over $\mathbb{R}$, not over $\mathbb{C}$), so it is either nowhere dense, or equal to the whole space. Suppose that it is equal to the whole space. It means that for all unit vectors $\xi_k \in \mathbb{C}^{n_k}$ as above we get
\[
\sum_{k=1}^l \langle \xi_k, (\alpha_k(g)-\alpha_k(h))\xi_k\rangle = 0.
\]
Since it is a sum of functions depending on different variables, all summands must be constant, i.e.\ there exists $c_k \in \mathbb{C}$ such that $\langle \xi_k, (\alpha_k(g) - \alpha_k(h)) \xi_k\rangle = c_k$ for all $\xi_k \in S^{n_k -1}$. It follows by rescaling that $\langle \xi_k, (\alpha_k(g) - \alpha_k(h) - c_k \mathds{1}) \xi_k\rangle = 0$ for any $\xi_k \in \mathbb{C}^{n_k}$, therefore
\[
\alpha_k(g) - \alpha_k(h) = c_k \mathds{1}
\]
for every $k =1, \ldots, l$.

Let now $\rho$ be the direct sum of all irreducible representations of $\Gamma$. We have $C^{\ast}_{\rho}(\Gamma)\simeq \mathbb{C}[\Gamma]$, because $\rho$ is the left regular representation with multiplicities ignored (or $\pi$ with all the one-dimensional irreducible representations added). The condition $\alpha_k(g) - \alpha_k(h) = c_{k}\mathds{1}$ for all $k =1,\ldots, l$ translates to the statement that $\rho(g) - \rho(h)$ belongs to the center of $C^{\ast}_{\rho}(\Gamma)$. Forgetting about $C^{\ast}_{\rho}(\Gamma)$ and working directly with the group algebra $\mathbb{C}[\Gamma]$, we obtain for each $t\in G$ that $tgt^{-1} - tht^{-1} + h - g=0$. Since group elements are linearly independent in $\mathbb{C}[\Gamma]$ (as they are orthogonal in $\ell^{2}(\Gamma)$), and we were only interested in pairs $(g,h)$ with $g\neq h$, it follows that $tgt^{-1} = g$ and $tht^{-1}=h$ for $t\in \Gamma$, i.e. $g$ and $h$ both belong to the center of $\Gamma$. It follows that for almost all choices of $\bm{\xi}$ we have $a_{g,h}=0$ as soon as $g\neq h$ and one of the elements $g$ or $h$ is not central. We therefore proved the following.
\begin{prop}
If we have two group elements $g\neq h\in \Gamma$ such that at least one of them is non-central then the set $V^{\bm{\xi}}_{g,h}$ is nowhere dense, that is for almost all choices of $\bm{\xi}$ we have $T_{\bm{\xi}}(g) \neq T_{\bm{\xi}}(h)$.
\end{prop}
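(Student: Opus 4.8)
The plan is to exploit the dichotomy afforded by working over an irreducible real variety. Since $\prod_{k=1}^l S^{n_k-1}$ is irreducible and $V_{g,h}^{\bm{\xi}}$ is the zero set of the single (real-)polynomial equation $T_{\bm{\xi}}(g)=T_{\bm{\xi}}(h)$, this set is either nowhere dense or equal to the whole product of spheres. Since a proper nowhere-dense algebraic subset of an irreducible real variety has measure zero for the natural (product-of-uniform) measure, the ``for almost all $\bm{\xi}$'' conclusion will follow immediately once I rule out the degenerate case. So the entire task reduces to showing that $V_{g,h}^{\bm{\xi}}$ cannot be all of $\prod_{k=1}^l S^{n_k-1}$ when one of $g,h$ is non-central.

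First I would argue by contradiction, assuming $\sum_{k=1}^l \langle \xi_k,(\alpha_k(g)-\alpha_k(h))\xi_k\rangle = 0$ for every choice of unit vectors $\xi_k$. The first key move is a separation-of-variables observation: the $k$-th summand depends only on $\xi_k$, so fixing all but one variable shows that each summand is constant, say $c_k$, with $\sum_k c_k=0$. The second move is to promote this to an operator identity: the map $\xi_k\mapsto \langle \xi_k,(\alpha_k(g)-\alpha_k(h))\xi_k\rangle - c_k\|\xi_k\|^2$ is a quadratic form vanishing on the unit sphere, so homogeneity extends the vanishing to all of $\mathbb{C}^{n_k}$, and polarization over $\mathbb{C}$ then forces $\alpha_k(g)-\alpha_k(h)=c_k\mathds{1}$ for every $k$.

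I would then upgrade this to the full regular representation. The differences coming from the one-dimensional irreducibles are automatically scalar, so writing $\rho$ for the direct sum of all irreducibles (with $C^{\ast}_{\rho}(\Gamma)\cong\mathbb{C}[\Gamma]$), the element $\rho(g)-\rho(h)$ is a scalar in each irreducible block, i.e.\ central in $\mathbb{C}[\Gamma]$. Passing back to the group algebra via faithfulness of $\rho$, centrality of $g-h$ means $tgt^{-1}-tht^{-1}+h-g=0$ in $\mathbb{C}[\Gamma]$ for every $t\in\Gamma$; matching basis vectors and using linear independence of $\{\lambda_t\}_{t\in\Gamma}$ together with $g\neq h$ forces $tgt^{-1}=g$ and $tht^{-1}=h$ for all $t$, so both $g$ and $h$ are central, contradicting the hypothesis.

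The step I expect to be the main obstacle is the middle one: correctly deducing from the vanishing of a \emph{sum} of quadratic forms on a product of spheres that each individual operator difference $\alpha_k(g)-\alpha_k(h)$ is an honest scalar multiple of the identity. This is where the separation of variables (to isolate each summand) must be combined with the homogeneity-plus-polarization argument (to pass from the sphere to all vectors and then identify the operator), and it is easy to be careless about working over $\mathbb{C}$ rather than $\mathbb{R}$ here. By contrast, the concluding translation back to $\Gamma$ is clean linear-algebra bookkeeping, relying only on faithfulness of $\rho$ and the linear independence of the group elements.
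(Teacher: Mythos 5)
Your proposal is correct and follows essentially the same route as the paper: the irreducibility dichotomy for the product of spheres, separation of variables to make each summand constant, homogeneity plus complex polarization to get $\alpha_k(g)-\alpha_k(h)=c_k\mathds{1}$, and then centrality of $\rho(g)-\rho(h)$ in $\mathbb{C}[\Gamma]$ combined with linear independence of group elements to force both $g$ and $h$ central. The only differences are cosmetic (you spell out the polarization step and the measure-zero remark slightly more explicitly than the paper does).
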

We are now ready to state and prove the main result of this section.

\begin{thm}\label{Thm:cayleyaut}
Let $\Gamma$ be a finite non-abelian group and set $\G=\widehat{\Gamma}$. Then there exists a projection $P$ in $C(\G)$ such that the quantum automorphism group of the resulting quantum Cayley graph $\Qut(\G, A_P)$ is equal to $\G$.
\end{thm}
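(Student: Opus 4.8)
The plan is to take for $P$ the projection assembled just before the statement, namely $P = \sum_{k=1}^l |\xi_k\rangle\langle \xi_k|$ (one rank-one projection in each non-character block of $\C[\Gamma]$) for a generic choice of $\bm{\xi}=(\xi_k)_k$, so that its Fourier transform $T := T_{\bm{\xi}}$ separates every pair $g \neq h$ in $\Gamma$ of which at least one element is non-central. Such a $\bm{\xi}$ exists: by the preceding proposition each ``bad'' locus $V^{\bm{\xi}}_{g,h}$ attached to such a pair is nowhere dense, and as there are only finitely many pairs the union of all bad loci is still negligible, so a generic $\bm{\xi}$ avoids them simultaneously. With this choice, $A_P = m_T$ is the Herz--Schur multiplier attached to a $T$ that is injective away from the center.

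Next I would analyse an arbitrary action $\alpha(\lambda_h) = \sum_{g \in \Gamma} \lambda_g \otimes a_{g,h}$ of a compact quantum group $\Hb$ on the quantum Cayley graph $(\G, A_P)$. As recorded before the statement, commutation with $m_T$ forces $a_{g,h} = 0$ whenever $T(g) \neq T(h)$. By the choice of $T$, for every non-central $h$ the only possibly non-zero entry in column $h$ is $a_{h,h}$; unitarity of $(a_{g,h})$ then forces $a_{h,h}$ to be unitary, so $\alpha(\lambda_h) = \lambda_h \otimes a_{h,h}$. Thus $\alpha$ is \emph{diagonal} on all non-central group elements, with each $a_{h,h}$ a unitary.

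The decisive group-theoretic point is that the non-central elements generate $\Gamma$. Indeed, $H := \langle \Gamma \setminus Z(\Gamma)\rangle$ is normal, and were it proper we would have $\Gamma \setminus H \subseteq Z(\Gamma)$; picking $z \in \Gamma \setminus H$ (necessarily central) and a non-central $x \in H$, the element $zx$ cannot lie in $H$, whence $zx$ would be central, forcing $x = z^{-1}(zx)$ central — a contradiction. Since $\Gamma$ is non-abelian, $\Gamma \setminus Z(\Gamma) \neq \emptyset$, so $H = \Gamma$. Because $\alpha$ is multiplicative and diagonal on this generating set, it is diagonal on all of $\Gamma$: writing $\alpha(\lambda_g) = \lambda_g \otimes b_g$, the assignment $g \mapsto b_g$ is a group homomorphism into the unitaries of $\cO(\Hb)$. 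I emphasize that this conclusion never uses separation of the central elements, which is exactly why the (generally unavailable) full point-separation of $T$ is not needed.

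To finish, I would note that a diagonal $\alpha$ automatically commutes with every left convolution operator $L_\omega$, since $\Delta(\lambda_g) = \lambda_g \otimes \lambda_g$ makes $L_\omega$ itself diagonal in the $\lambda_g$-basis ($L_\omega \lambda_g = \omega(\lambda_g)\lambda_g$). Lemma \ref{lem:commute} then yields a Hopf $^*$-morphism $\gamma: \cO(\G) \to \cO(\Hb)$ with $\alpha = (\id_{\cO(\G)} \otimes \gamma)\circ \Delta_\G$, so every action on $(\G, A_P)$ factors through the coproduct action of $\G$. Applying this to the universal action of $\Hb = \Qut(\G, A_P)$ and pairing $\gamma$ with the surjection $q$ of Lemma \ref{lem:G as a subgroup} gives mutually inverse Hopf $^*$-morphisms, hence $\Qut(\G, A_P) = \G$. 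The main obstacle here is not this final quantum-group bookkeeping but the genericity input — producing one projection whose multiplier separates all non-central pairs at once — together with the slightly surprising observation that the center may be left unseparated, reducing everything to the generation of $\Gamma$ by its non-central elements.
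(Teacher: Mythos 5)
Your proof is correct and follows essentially the same route as the paper: the same generic projection $P=\sum_k|\xi_k\rangle\langle\xi_k|$, the same genericity proposition killing $a_{g,h}$ for pairs with a non-central element, and the same use of the multiplicativity relation to propagate the vanishing to pairs of central elements --- your ``non-central elements generate $\Gamma$'' lemma is just a repackaging of the paper's factorization $h=t\cdot(t^{-1}h)$ with $t$ non-central. The only cosmetic difference is that you invoke Lemma~\ref{lem:commute} explicitly at the end, where the paper concludes directly from the diagonality of the action.
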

\begin{proof}
We already know from the previous proposition that for a generic choice of $\bm{\xi}$ we have $a_{g,h}=0$ whenever either $g$ or $h$ is non-central.   

Suppose now that both $g$ and $h$ belong to the center of $\Gamma$. Since we assume that $\Gamma$ is non-abelian, there exists an element $t$ that does not belong to the center. In the formula $a_{g, h_1 h_2} = \sum_{g_1} a_{g_1, h_1} a_{g_1^{-1}g, h_2}$ we choose $h_1:=t$ and $h_2 := t^{-1}h$. Then for each $g_1\in \Gamma\setminus\{t\}$ we have $a_{g_1,h_1} = a_{g_1,t}=0$ because $t$ is not from the center and $g_1\neq t$. If $g_1=t$ then the elements $g_{1}^{-1}g = t^{-1}g$ and $h_2 = t^{-1}h$ are non-central and distinct, so $a_{t^{-1}g, t^{-1}h}=0$, hence $a_{g,h}=0$. If follows that $a_{g,h}=0$ for $g\neq h$, hence the quantum automorphism group is precisely $\G$.
\end{proof}

The quantum Cayley graphs constructed in the proof above are highly non-central. Note that in general we cannot expect a version of the above result for central quantum Cayley graphs, as noted in Proposition \ref{prop:noncentral}.

Finally note that using the above construction we can produce examples of quantum graphs with a big gap between the classical and quantum automorphism groups.

\begin{thm}\label{thm:trivquat}
Suppose that $\Gamma$ is a finite perfect group (i.e.\ it has no non-trivial abelian quotients), $\G= \widehat{\Gamma}$. Then there exists a quantum graph structure on $C(\G)$ such that the classical automorphism group is trivial, but the quantum automorphism group acts transitively.
\end{thm}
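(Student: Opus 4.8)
The plan is to apply the rigidity result just established and then verify the two asserted properties directly. As $\Gamma$ is perfect and non-trivial, it is non-abelian (an abelian group coincides with its own abelianisation), so Theorem~\ref{Thm:cayleyaut} produces a projection $P \in C(\G)$ with $\Qut(\G, A_P) = \G = \widehat\Gamma$. I would then take $\Gr = (\G, A_P)$ and argue the two halves of the statement separately.

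For transitivity, I would identify the canonical action of $\Qut(\G, A_P)$ on $C(\G)$ with the coproduct action of $\G$. By Lemma~\ref{lem:G as a subgroup} the embedding $\G \hookrightarrow \Qut(\G, A_P)$ is implemented by a quotient map $q$ satisfying $(\id_{\cO(\G)} \otimes q) \circ \alpha = \Delta_\G$; since $\Qut(\G, A_P) = \G$ forces $q$ to be an isomorphism, the fixed-point subalgebras of $\alpha$ and of $\Delta_\G$ coincide. It then suffices to check that $\Delta_\G$ is ergodic, i.e.\ that any $x \in C(\G)$ with $\Delta_\G(x) = x \otimes 1$ is a scalar. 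Applying $\id \otimes h$, with $h$ the Haar state, and using its right-invariance gives $x = (\id \otimes h)\Delta_\G(x) = h(x)\,1 \in \C 1$, which is precisely the statement that the quantum automorphism group acts transitively.

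For the classical automorphism group, I would invoke the standard description (recalled in the preliminaries) of $\Aut(\G, A_P)$ as the classical group whose function algebra is the abelianisation of $\cO(\Qut(\G, A_P))$. Here $\cO(\Qut(\G, A_P)) = \cO(\G) = \C[\Gamma]$, and abelianising the group algebra identifies $\lambda_{gh}$ with $\lambda_{hg}$ for all $g,h$, hence collapses $\Gamma$ onto $\Gamma/[\Gamma,\Gamma] = \Gamma^{\mathrm{ab}}$; thus the abelianisation is $\C[\Gamma^{\mathrm{ab}}]$. Equivalently, classical automorphisms correspond to characters of $\C[\Gamma]$, that is to $\Hom(\Gamma, \C^\times) = \widehat{\Gamma^{\mathrm{ab}}}$. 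As $\Gamma$ is perfect, $\Gamma^{\mathrm{ab}}$ is trivial, so this algebra is $\C = \cO(\{e\})$ and $\Aut(\G, A_P)$ is the trivial group.

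Both ingredients are essentially routine once Theorem~\ref{Thm:cayleyaut} is available, so I do not anticipate a genuine obstacle; the only delicate point is the bookkeeping between the two roles of $\C[\Gamma]$ — as the function algebra $C(\G)$ of the quantum vertex set used in the transitivity argument, and as the Hopf $*$-algebra $\cO(\Qut(\G, A_P))$ whose abelianisation computes the classical symmetries — together with the consistent use of the identification $\Qut(\G, A_P) = \G$ in both steps.
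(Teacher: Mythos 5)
Your proposal is correct and follows essentially the same route as the paper: apply Theorem~\ref{Thm:cayleyaut} to get $\Qut(\G,A_P)=\G$, observe that the resulting action is the coproduct and hence ergodic, and note that the classical automorphism group is trivial because the only commutative Hopf $*$-algebra quotient of $\C[\Gamma]$ is $\C[\Gamma^{\mathrm{ab}}]=\C$ when $\Gamma$ is perfect (the paper phrases this as ``$\G$ has no non-trivial classical subgroups,'' which is the same fact). You merely spell out the routine ergodicity computation via the Haar state that the paper leaves implicit.
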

\begin{proof}
It follows from the assumption that the only character of $\Gamma$ is the trivial representation. It follows that $\G$ has no non-trivial classical subgroups, as these would correspond to Hopf $^*$-algebra quotients of $\C[\Gamma]$ and the only option there is $\C$. By Theorem \ref{Thm:cayleyaut} we can find a projection $P \in C(\G)$ such that the quantum automorphism group of the resulting quantum Cayley graph is equal to $\G$. Since $\G$ has no non-trivial classical subgroups, it follows that the classical automorphism group is trivial. Since the action of $\G$ on itself is given by the comultiplication, it is transitive (i.e.\ ergodic).   
\end{proof}

\subsection{Examples of quantum Cayley graphs and their quantum symmetry groups on duals of symmetric groups}

In this section we will describe some examples related to duals of finite permutation groups, essentially following the connection to quantum isometry groups mentioned above. We begin however with some simple remarks concerning the general context.

Let $\Gamma$ be a finite group. The discussion above shows that to compute the quantum symmetry groups of the quantum Cayley graph associated to a projection $P$ in $\C[\Gamma]$ we need to first understand the level sets of $T=\mathcal F(P)$, and then to compute the relevant quantum symmetry group `preserving the level sets'. 

To address the first point, we may apply the following easy lemma, which is the direct consequence of the Peter-Weyl theory (for finite groups).

\begin{lem} \label{PeterWeyl}
Consider the  isomorphism $\rho:\C[\Gamma]\to \bigoplus_{\pi \in \textup{Irr}(\Gamma)} M_{n_{\pi}}$, given by the (inverse Fourier) identification $\lambda_g \mapsto \bigoplus_{\pi \in \textup{Irr}(\Gamma)} \pi(g)$. Fix $\pi \in \textup{Irr}(\Gamma)$ and vectors $\xi,\eta \in H_\pi$, and set $x=|\xi\rangle \langle \eta | \in M_{n_{\pi}}$. Then 
\[ \rho^{-1} (x) = \frac{ n_\pi}{|\Gamma|}\sum_{g\in \Gamma}
\langle \pi(g) \eta,  \xi \rangle\,\lambda_g.\]
Further if $P=\id_{H_\pi}$ then we have
\[ \rho^{-1} (P) = \frac{ n_\pi}{|\Gamma|}\sum_{g\in \Gamma} \Tr(\pi(g)^*) \, \lambda_g.\] 
\end{lem}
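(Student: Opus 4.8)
The plan is to recognise this lemma as the classical Fourier inversion formula on the finite group $\Gamma$, so that everything reduces to the Schur orthogonality relations for matrix coefficients of unitary irreducible representations. Since $\rho$ is a linear isomorphism, to establish the first identity it suffices to denote by $y := \frac{n_\pi}{|\Gamma|}\sum_{g\in\Gamma}\langle \pi(g)\eta,\xi\rangle\lambda_g$ the claimed value of $\rho^{-1}(x)$ and to verify directly that $\rho(y) = x$. I would use throughout the convention that $\langle\cdot,\cdot\rangle$ is conjugate-linear in the first variable (as dictated by $\langle a,b\rangle = \psi(a^*b)$), that $\pi(g)_{ab} = \langle e_a, \pi(g) e_b\rangle$ in a fixed orthonormal basis, and that $x = |\xi\rangle\langle\eta|$ has entries $x_{ij} = \xi_i\overline{\eta_j}$ with $\xi_i = \langle e_i,\xi\rangle$.

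First I would compute $\rho(y)$ block by block. By definition $\rho(y) = \frac{n_\pi}{|\Gamma|}\sum_g \langle\pi(g)\eta,\xi\rangle \bigoplus_{\sigma\in\textup{Irr}(\Gamma)}\sigma(g)$; expanding $\langle\pi(g)\eta,\xi\rangle = \sum_{a,b}\overline{\pi(g)_{ab}}\,\overline{\eta_b}\,\xi_a$, the $(i,j)$-entry of the $\sigma$-block becomes
\[ [\rho(y)_\sigma]_{ij} = n_\pi \sum_{a,b}\xi_a\overline{\eta_b}\left(\frac{1}{|\Gamma|}\sum_{g\in\Gamma}\sigma(g)_{ij}\,\overline{\pi(g)_{ab}}\right). \]
The inner sum is exactly a Schur orthogonality relation, equal to $\frac{1}{n_\pi}\delta_{\sigma\pi}\delta_{ia}\delta_{jb}$, and substituting this collapses the expression to $\delta_{\sigma\pi}\,\xi_i\overline{\eta_j} = \delta_{\sigma\pi}\,x_{ij}$. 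Hence $\rho(y) = x$, proving the first formula; note that the normalisation $n_\pi/|\Gamma|$ is forced precisely by the factor $1/n_\pi$ appearing in the orthogonality relation.

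Then the second identity follows by specialisation and linearity. Writing $P = \id_{H_\pi} = \sum_k |e_k\rangle\langle e_k|$ and applying the first formula to each choice $\xi = \eta = e_k$, the coefficient of $\lambda_g$ becomes $\frac{n_\pi}{|\Gamma|}\sum_k \langle\pi(g)e_k, e_k\rangle = \frac{n_\pi}{|\Gamma|}\sum_k\overline{\pi(g)_{kk}} = \frac{n_\pi}{|\Gamma|}\,\overline{\Tr\pi(g)}$, and $\overline{\Tr\pi(g)} = \Tr(\pi(g)^*)$ since the trace of the adjoint is the conjugate of the trace. This yields $\rho^{-1}(P) = \frac{n_\pi}{|\Gamma|}\sum_{g}\Tr(\pi(g)^*)\lambda_g$ as claimed.

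The computation is essentially routine, so the only real obstacle is convention bookkeeping: one must fix \emph{unitary} representatives in each equivalence class (so that Schur orthogonality applies in the clean form above, and so that $\overline{\pi(g)_{ab}} = \pi(g^{-1})_{ba}$), and one must track which slot of $\langle\cdot,\cdot\rangle$ is conjugate-linear, to be sure that the complex conjugate lands on $\eta$ and that the adjoint $\pi(g)^*$ --- rather than $\pi(g)$ --- appears in the trace formula. An equivalent, convention-free route is to pair $y$ against the basis $(\lambda_h)_{h\in\Gamma}$ using the Haar functional $\psi$ on $\C[\Gamma]$, which reproduces the same orthogonality relation; either way the substantive input is Peter--Weyl/Schur orthogonality for finite groups.
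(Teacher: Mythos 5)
Your proof is correct and follows essentially the same route as the paper's: define the candidate element $y$, verify $\rho(y)=x$ block by block via the Schur/Peter--Weyl orthogonality relations, and deduce the trace formula by linearity over $P=\sum_k |e_k\rangle\langle e_k|$. The only difference is cosmetic --- you expand in coordinates directly, whereas the paper first reduces to $\xi=e_i$, $\eta=e_j$ and then cites the orthogonality relations.
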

\begin{proof}
Set $n=n_\pi$ and fix an orthonormal basis $(e_1, \ldots, e_n)$ of $H_\pi$.
By linearity it suffices to prove the result for $\xi=e_i, \eta = e_j$ for some $i,j=1,\ldots,n$. Set $y=\frac{ n_\pi}{|\Gamma|}\sum_{g\in \Gamma}
\langle \pi(g) e_j,  e_i \rangle\,\lambda_g$. We need to check that $\rho(y) =x$, so that
\[ \rho(y)_\pi = |e_i\rangle \langle e_j|,\]
\[ \rho(y)_{\pi'} =0, \;\;\; \textup{for} \;\pi' \in \textup{Irr}(\Gamma), \pi'\neq \pi.\]
But the two formulas displayed above are equivalent to the following:
\[ \frac{ n_\pi}{|\Gamma|}\sum_{g \in \Gamma}
\overline{ \langle e_i,  \pi(g) e_j \rangle} \langle e_k,  \pi(g) e_l \rangle = \langle e_k, e_i \rangle \langle e_j, e_l \rangle,    \]
\[ \frac{ n_\pi}{|\Gamma|}\sum_{g\in \Gamma}
\overline{ \langle e_i,  \pi(g) e_j \rangle} \langle \zeta,  \pi'(g) \kappa \rangle = 0  \]
for all $k,l=1,\ldots,n_\pi$ and all vectors $\zeta, \kappa \in H_{\pi'}$, where $\pi' \in \textup{Irr}(\Gamma), \pi'\neq \pi$. These are however precisely the Peter-Weyl orthogonality relations.

The second formula follows from the first one by linearity.

\end{proof}

We will now make a few comments on  `quantum actions on $\C[\Gamma]$' preserving the trace and a certain partition of $\Gamma$.

\begin{defn}
Let $\Gamma$ be a finite group, $F \subset \Gamma$, let $A$ be a unital C*-algebra, and let $\alpha:\C[\Gamma] \to \C[\Gamma] \otimes A$ be a unital *-homomorphism. We say that $\alpha$ preserves $F$ if for any $g\in F$ we have
\[ \alpha (\lambda_g) = \sum_{g' \in F} \lambda_{g'} \otimes A_{g,g'} \]
for some $A_{g,g'} \in A$.
\end{defn}

Note that in general $\alpha$ as above can be always described via a matrix $[A_{g, g'}]_{g,g' \in\Gamma}\in M_{|\Gamma|}(A)$; moreover every unital homomorphism preserves $\{e\}$.

\begin{prop}\label{prop:genformGamma}
    Suppose that $\Gamma$ is a finite group,  $A$ a unital $\Cst$-algebra, and let $\alpha:\C[\Gamma] \to \C[\Gamma] \otimes A$ be a unital *-homomorphism which is trace preserving, i.e. $(\tau \otimes id)\circ \alpha = \tau(\cdot)1_A$. Then 
    \begin{itemize}
        \item[(i)] the matrix $U:=[A_{g,g'}]_{g,g' \in\Gamma}\in M_{|\Gamma|}(A)$ describing $\alpha$ is unitary;
         \item[(ii)] for any $F \subset \Gamma$ if $\alpha$ preserves $F$ and $\Gamma \setminus F$, then the submatrix $U_F:=[A_{g,g'}]_{g,g' \in F}\in M_{|F|}(A)$ is also unitary.
    \end{itemize}
\end{prop}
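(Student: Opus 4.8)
The plan is to show that the operator $U$ on $\ell^2(\Gamma)$ with entries in $A$ implementing $\alpha$ (given by $U\lambda_g=\sum_{g'}\lambda_{g'}\otimes A_{g,g'}$, so that $U$ is unitary precisely when the matrix $[A_{g,g'}]$ is, a matrix and its transpose being unitary together) satisfies $U^*U=\I=UU^*$. I work throughout in the basis $\{\lambda_g\}_{g\in\Gamma}$, which is orthonormal since $\tau(\lambda_g^*\lambda_h)=\delta_{g,h}$, so ``unitary'' carries its literal matricial meaning. First I would record two consequences of $\alpha$ being a trace-preserving unital $*$-homomorphism. Since $\alpha$ is $*$-preserving and $\lambda_g^*=\lambda_{g^{-1}}$, comparing coefficients in $\alpha(\lambda_g)^*=\alpha(\lambda_{g^{-1}})$ yields the antipode relation $A_{g,g'}^*=A_{g^{-1},(g')^{-1}}$; equivalently $\bar U=JUJ$, where $J$ is the self-adjoint permutation unitary $\lambda_g\mapsto\lambda_{g^{-1}}$. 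Next, trace-preservation together with multiplicativity gives $(\tau\otimes\id)(\alpha(\lambda_k)^*\alpha(\lambda_l))=(\tau\otimes\id)\alpha(\lambda_k^*\lambda_l)=\tau(\lambda_k^*\lambda_l)1_A=\delta_{k,l}1_A$; expanding the left-hand side in the basis is exactly $U^*U=\I$. Thus $U$ is automatically an isometry, and this half is routine.

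The crux is the reverse equality $UU^*=\I$, which genuinely uses that $\alpha$ is an algebra homomorphism and does \emph{not} follow from isometry alone over a general $C^*$-algebra $A$. Here I would exploit the Frobenius structure. Multiplicativity of $\alpha$ is the relation $m(U\otimes U)=Um$ (as in \eqref{eq.QAut}), whose adjoint is $(U^*\otimes U^*)m^*=m^*U^*$. Combining these with $mm^*=|\Gamma|\,\I$ gives
\[ |\Gamma|\,UU^* = U\,mm^*\,U^* = m(U\otimes U)(U^*\otimes U^*)m^* = m\big((UU^*)\otimes(UU^*)\big)m^*. \]
Writing $Q:=UU^*$, which is a projection (as $U$ is an isometry) with $Q\eta=\eta$ (from $U\eta=\eta$ and $U^*\eta=U^*U\eta=\eta$), this reads $m(Q\otimes Q)m^*=|\Gamma|\,Q$. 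Setting $\Omega:=m^*\eta=\sum_{g}\lambda_g\otimes\lambda_{g^{-1}}$, so $\langle\Omega,\Omega\rangle=\langle\eta,mm^*\eta\rangle=|\Gamma|$, I would then compute
\[ \langle\Omega,(Q\otimes Q)\Omega\rangle = \langle\eta,m(Q\otimes Q)m^*\eta\rangle = |\Gamma|\,\langle\eta,Q\eta\rangle = |\Gamma|\,1_A = \langle\Omega,\Omega\rangle\,1_A. \]
As $0\le Q\otimes Q\le\I$, this forces the $A_+$-valued quantity $\langle\Omega,(\I-Q\otimes Q)\Omega\rangle$ to vanish, hence $(Q\otimes Q)\Omega=\Omega$. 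Comparing coefficients of $\lambda_p\otimes\lambda_r$ in this identity yields $\sum_a Q_{p,a}Q_{r,a^{-1}}=\delta_{r,p^{-1}}1_A$; applying the antipode relation (which $Q$ inherits from $U$, since $JQJ=\bar Q$) turns the left-hand side into $(QQ^*)_{p,r^{-1}}=Q_{p,r^{-1}}$, so $Q_{p,s}=\delta_{p,s}1_A$, i.e. $Q=\I$. This passage from isometry to coisometry, via the Frobenius identity and the positivity argument at $\Omega$, is the step I expect to be the main obstacle, both to find and to get the normalisations right.

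For part (ii), the hypothesis that $\alpha$ preserves both $F$ and $\Gamma\setminus F$ means exactly that $A_{g,g'}=0$ whenever $g,g'$ lie in different blocks of the partition $\Gamma=F\sqcup(\Gamma\setminus F)$; that is, $U$ is block-diagonal, $U=U_F\oplus U_{\Gamma\setminus F}$. Combining this with part (i), the identities $U^*U=\I$ and $UU^*=\I$ restrict to each diagonal block, giving $U_F^*U_F=\I_{|F|}=U_FU_F^*$. Hence $U_F$ is unitary, as required.
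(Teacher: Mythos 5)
Your part (ii) is exactly the paper's argument (the paper observes in one line that a block-diagonal operator-valued unitary has unitary blocks). For part (i) the paper simply cites \cite[Proposition 3.3.13]{ASS17}, so your self-contained derivation is a genuinely different route: the isometry half from trace preservation is the standard computation, and your idea of extracting the co-isometry from the Frobenius identity $mm^*=|\Gamma|\,\I$ together with a positivity argument at the duality vector $\Omega=m^*\eta$ is a legitimate and rather clean way to supply the step the reference black-boxes.

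There is, however, one step that fails as written over a noncommutative $A$. In the relation $m(U\otimes U)=Um$ the symbol $U\otimes U$ means $U_{13}U_{23}$: the two copies of $U$ share the single $A$-leg. Consequently
\[(U\otimes U)(U\otimes U)^* = U_{13}U_{23}U_{23}^*U_{13}^* = U_{13}\,Q_{23}\,U_{13}^*,\qquad Q:=UU^*,\]
which is \emph{not} $Q_{13}Q_{23}$ unless $U_{13}$ and $Q_{23}$ commute; moreover $Q_{13}Q_{23}$ is a product of two non-commuting projections, so it need not be self-adjoint and the inequality $0\le Q\otimes Q\le \I$ you invoke is not available. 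Fortunately the repair is immediate and actually shortens your proof. From $|\Gamma|\,Q = m\,U_{13}Q_{23}U_{13}^*\,m^*$ and $Q\eta=\eta$ you get
\[|\Gamma|\,1_A=\langle U_{13}^*\Omega,\,Q_{23}\,U_{13}^*\Omega\rangle \le \langle U_{13}^*\Omega,U_{13}^*\Omega\rangle=\langle\Omega,Q_{13}\Omega\rangle\le\langle\Omega,\Omega\rangle=|\Gamma|\,1_A,\]
using only that $Q_{13},Q_{23}$ are individually genuine projections in the Hilbert $A$-module. Equality throughout forces $\langle\Omega,(\I-Q_{13})\Omega\rangle=0$, hence $Q_{13}\Omega=\Omega$; comparing the coefficient of $\lambda_p\otimes\lambda_{g^{-1}}$ in $Q_{13}\Omega=\sum_{g,p}\lambda_p\otimes\lambda_{g^{-1}}\otimes Q_{p,g}$ with $\Omega$ gives $Q_{p,g}=\delta_{p,g}1_A$ directly. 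This bypasses your final passage through the antipode relation $JQJ=\bar Q$ (which would itself need justification, since the entrywise adjoint of a product of operator-valued matrices is not the product of the entrywise adjoints). With this correction your argument is complete and self-contained.
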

\begin{proof}
    The first part is standard, see for example \cite[Proposition 3.3.13]{ASS17}. 
    
    The second is an easy consequence of the fact that if an operator-valued unitary matrix is block-diagonal, the blocks themselves must be unitary. Note that using \cite[Theorem 3.3.8 (i)]{ASS17} one can show that it would be in fact sufficient to assume that $\alpha$ preserves $F$; we will however not need it here.  
\end{proof}

Let us now apply these arguments to finite symmetric groups. 
Consider the defining representation $\pi$ of $S_n$ on $\C^n$ (it is not irreducible, but this will not play a role in the arguments below). The character of the defining representation counts (up to rescaling) the number of fixed points of a given permutation: given $\sigma \in S_n$
we have 
\[\Tr(\pi(\sigma)) = \sum_{i=1}^n \langle e_i, e_{\sigma(i)} \rangle = |\textup{Fix}(\sigma)|.\]

This already allows us to formulate the first result. In the formula below $D[ \widehat{S_n}]$ denotes the quantum group associated to the `doubling' of the Hopf algebra $\C[S_n]$ (see for example \cite{LDS12} or \cite{SkS14}). For a permutation $\sigma \in S_n$ we will also write simply $\sigma \in \mathbb{C}[S_n]$ (and not $\lambda_\sigma$).

\begin{prop} \label{prop:Sndifferent}
Consider $\Gamma=S_n$, $n\geq 3$. Let $P$ be the (central) projection in $\C[S_n]$ corresponding to the character of the (irreducible part of the) defining representation -- which is generating -- and denote by $A$ the respective quantum adjacency matrix. 
Then 
\[ \widehat{S_n}  \subsetneq D[ \widehat{S_n}] \subsetneq \Qut(\widehat{S_n}, A).  \]
\end{prop}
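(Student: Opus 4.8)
The plan is to place all three quantum groups into a single chain inside $\Qut(\widehat{S_n},A)$ and then prove the two inclusions proper by different mechanisms: the first strictness is a purely quantum phenomenon detected by a dimension count, while the second is already visible on the classical (abelianised) level through inner automorphisms.

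First I would record the combinatorics and the relevant presentation. By Lemma~\ref{PeterWeyl} the central projection $P$ onto the standard representation has Fourier transform $T=\mathcal F(P)$ proportional to $\sigma\mapsto|\textup{Fix}(\sigma)|-1$, so the level sets of $T$ are exactly the sets $F_k=\{\sigma\in S_n:|\textup{Fix}(\sigma)|=k\}$. Each $F_k$ is a union of conjugacy classes, and since every conjugacy class of $S_n$ is real, $F_k$ is invariant both under conjugation $\sigma\mapsto t\sigma t^{-1}$ and under inversion $\sigma\mapsto\sigma^{-1}$. Recalling that $\Qut(\widehat{S_n},A)$ is the universal quantum group whose coaction $\alpha(\lambda_h)=\sum_g\lambda_g\otimes a_{g,h}$ is a trace-preserving $*$-homomorphism commuting with $A=m_T$, the commutation is equivalent to $a_{g,h}=0$ whenever $|\textup{Fix}(g)|\neq|\textup{Fix}(h)|$; combined with Proposition~\ref{prop:genformGamma} this says $U=(a_{g,h})$ is block-diagonal along $\{F_k\}$ with unitary blocks, subject to the comultiplicativity relations $a_{g,h_1h_2}=\sum_{g_1}a_{g_1,h_1}a_{g_1^{-1}g,h_2}$.

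Next I would establish the inclusions together with the first strictness. That $\widehat{S_n}$ is a quantum subgroup of $\Qut(\widehat{S_n},A)$ is exactly Lemma~\ref{lem:G as a subgroup}. For the doubling I would invoke its canonical coaction on $\C[S_n]$ from \cite{SkS14,LDS12}, fitting the scheme of quantum isometry groups of group duals initiated in \cite{BhS10}: $D[\widehat{S_n}]$ augments the comultiplication coaction of $\widehat{S_n}$ by the reflection, whose coaction intertwines $\lambda_\sigma$ with $\lambda_{\sigma^{-1}}$. As $T$ is central and the classes of $S_n$ are real, $T(\sigma^{-1})=T(\sigma)$, so the reflection commutes with $m_T$; hence the doubling coaction preserves $A$, it is faithful (for $n\ge3$ any $3$-cycle satisfies $\sigma\neq\sigma^{-1}$ and lies with $\sigma^{-1}$ in the common level set $F_{n-3}$, so the reflection moves coordinates nontrivially), and we obtain $\widehat{S_n}\subseteq D[\widehat{S_n}]\subseteq\Qut(\widehat{S_n},A)$. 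The inclusion $\widehat{S_n}\subsetneq D[\widehat{S_n}]$ is then proper by dimensions: the doubling doubles the underlying algebra, $\dim\cO(D[\widehat{S_n}])=2\,n!>n!=\dim\C[S_n]=\dim\cO(\widehat{S_n})$, and by the faithfulness just noted $D[\widehat{S_n}]$ embeds with its full coordinate algebra. Note this strictness is invisible classically, as inversion is only an anti-homomorphism of the non-abelian $\C[S_n]$.

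Finally, where the main work lies, I would prove $D[\widehat{S_n}]\subsetneq\Qut(\widehat{S_n},A)$ by comparing maximal classical subgroups (abelianisations). Each inner automorphism $\op{Ad}_{\lambda_t}\colon\lambda_g\mapsto\lambda_{tgt^{-1}}$ is a trace-preserving $*$-automorphism of $\C[S_n]$ commuting with the central multiplier $m_T$, and with $M_{g,h}=\delta_{g,tht^{-1}}$ one checks directly that the comultiplicativity relations hold, so $\op{Ad}_{\lambda_t}$ is a classical point of $\Qut(\widehat{S_n},A)$. Since $Z(S_n)$ is trivial for $n\ge3$, these give a copy of $\op{Inn}(S_n)\cong S_n$ inside the classical automorphism group $\Aut(\widehat{S_n},A)=\Qut(\widehat{S_n},A)_{\mathrm{cl}}$, whence $|\Aut(\widehat{S_n},A)|\ge n!$. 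On the other hand the abelianisation of $\cO(D[\widehat{S_n}])$ is built from $\C[S_n]^{\mathrm{ab}}=\C[\Z_2]$ enlarged only by the reflection $\Z_2$, so $D[\widehat{S_n}]_{\mathrm{cl}}$ has order at most $4$; as $n!\ge6>4$ for $n\ge3$, the inclusion $D[\widehat{S_n}]_{\mathrm{cl}}\subseteq\Aut(\widehat{S_n},A)$ is proper, forcing $D[\widehat{S_n}]\subsetneq\Qut(\widehat{S_n},A)$. The delicate point, and the main obstacle, is the bound on $D[\widehat{S_n}]_{\mathrm{cl}}$: it needs the explicit doubling of \cite{SkS14,LDS12} to confirm that its abelianisation cannot absorb the inner automorphisms of $S_n$. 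Should that bound prove awkward, one can instead exhibit directly, inside a single block $U_{F_k}$, a scalar unitary realising some $\op{Ad}_{\lambda_t}$ and verify that it does not factor through the doubling's coaction.
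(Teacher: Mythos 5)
Your overall strategy is sound, and for the harder of the two strict inclusions it is genuinely different from ours. We prove $D[\widehat{S_n}]\subsetneq\Qut(\widehat{S_n},A)$ by exhibiting a second quantum subgroup of $\Qut(\widehat{S_n},A)$ --- the universal action preserving the rotated generating set $\{s_2,\dots,s_{n-1},(n1)\}$ --- which is not contained in the first doubling because it fails to preserve $\{s_1,\dots,s_{n-1}\}$. You instead count classical points: the inner automorphisms $\op{Ad}_{\lambda_t}$ are trace-preserving $^*$-automorphisms commuting with the central multiplier $m_T$, so $\op{Inn}(S_n)\cong S_n$ embeds into $\Aut(\widehat{S_n},A)$, whereas $\cO(D[\widehat{S_n}])\cong\C[S_n]\oplus\C[S_n]$ has only four $^*$-characters; since $n!>4$, no surjection $\cO(\Qut(\widehat{S_n},A))\to\cO(D[\widehat{S_n}])$ can be injective. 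This is correct, more elementary, and in one respect stronger than our argument (it shows the two quantum groups cannot be isomorphic at all), at the price of detecting only extra \emph{classical} symmetries where we exhibit a further genuinely quantum subgroup.

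One factual slip needs repair. The ``reflection'' in the doubling is \emph{not} the inversion $\lambda_\sigma\mapsto\lambda_{\sigma^{-1}}$: inversion is an anti-automorphism of the noncommutative algebra $\C[S_n]$ and so cannot arise as a classical point of any coaction. The doubling of \cite{LDS12} is taken with respect to the Coxeter-diagram automorphism $s_i\mapsto s_{n-i}$, i.e.\ conjugation by the longest element $w_0$. Fortunately this automorphism is inner, hence preserves every level set $F_k$ of the fixed-point function just as inversion would, so your verification that the doubling coaction commutes with $m_T$, the faithfulness check, and the dimension count $2\cdot n!>n!$ for the first strict inclusion all survive the substitution (replace ``$\sigma\neq\sigma^{-1}$'' by ``$\sigma$ does not commute with $w_0$''). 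The identification is not purely cosmetic: for $S_6$ there are automorphisms that do not preserve fixed-point counts, so one genuinely has to know which automorphism the doubling uses before concluding that its coaction preserves $A$.
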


\begin{proof}
In \cite{LDS12} the authors -- following the earlier work in \cite{BhS10} for $n=3$ -- compute the quantum isometry group of $\widehat{S_n}$ for the generating set $T$ consisting of the first $n-1$ `consecutive' transpositions (i.e.\ $T=\{(12), (23),\ldots, (n-1, n)\}$) and show that this coincides with the `doubling' $D(\widehat{S_n})$. 

In particular  \cite[Formula (2.33)]{LDS12}  shows that the action   preserving the set $T$ must necessarily be of the form 
\begin{equation} \label{Doublingaction} \alpha(s_i) = s_i \otimes \sigma_i   + s_{n-i}  \otimes \tau_i , \; i=1,\ldots, n-1,\end{equation}
where $s_i=(i,i+1)$ and elements $\sigma_i, \tau_j$ denote the corresponding elements in the (mutually orthogonal!) copies of $\C[S_n]$ inside the doubling Hopf algebra. Thus 
\begin{align*}&\alpha((n1)) = \alpha(s_1) \cdots \alpha(s_{n-1}) \alpha(s_{n-2}) \cdots \alpha (s_1) \\&= 
s_1 \cdots s_{n-1} s_{n-2} \cdots s_1 \otimes  \sigma_1 \cdots \sigma_{n-1} \sigma_{n-2} \cdots \sigma_1 + 
s_{n-1} \cdots s_{1} s_{2} \cdots s_{n-1}  \otimes 
\tau_1 \cdots \tau_{n-1} \tau_{n-2} \cdots \tau_1,
\end{align*}
so in fact
\[\alpha((n1)) = (n1)  \otimes X.\]
Hence the action preserves also the singleton set $(n1)$, and further the set of all transpositions. This already shows that 
$D[ \widehat{S_n}] \subset \Qut(\widehat{S_n}, A)$. Consider now the universal action preserving the set $T'=\{s_2,\ldots, s_{n-1}, (n1)\}$. By repeating the same arguments as above (essentially reordering the indices) we can see that this would also preserve the set of all transpositions, but not the set $\{s_1, \ldots, s_{n-1}\}$ (as the `orbit' of say $s_2$ would include also $s_n$). Thus the inclusion $D[ \widehat{S_n}] \subset \Qut(\widehat{S_n}, A)$ must be strict.
\end{proof}

We will now specify the situation to $n=3$, again consider the defining representation, but allow now an arbitrary rank one-projection, determined by a norm $1$ vector $(\alpha, \beta, \delta) \in \C^3$, which is orthogonal to the `symmetric subspace', i.e. $\alpha +\beta +\delta =0$). As we only care about the level sets of $T$, Lemma \ref{PeterWeyl} implies that in fact we can ignore the normalization. Set then $\xi = (1, \alpha, -1-\alpha)$, $x= |\xi \rangle \langle \xi |$, and $T=\rho^{-1}(x)$, using the notation of Lemma \ref{PeterWeyl}.
We then have, by the same lemma,
\[T_{\mathrm{id}} = 1+|\alpha|^2 +|1+\alpha|^2,\]
\[ T_{(12)} = |1+\alpha|^2 +\alpha + \bar{\alpha} , 
T_{(13)} = |\alpha|^2 - 2 -(\alpha + \bar{\alpha}),
T_{(23)} = 1- (\alpha + \bar{\alpha}) - 2 |\alpha|^2  ,  \]
\[ T_{(123)} = - 1 - |\alpha|^2  - \bar{\alpha} - 2 \alpha, \; T_{(132)} = - 1 - |\alpha|^2  - 2\bar{\alpha} -  \alpha.\]
It is now easy to see the following facts:
\begin{itemize}
\item[(1)] if $\alpha=0$,  the level sets of $T$ are singletons apart from $\{(123), (132)\}$ and $\{(12),(23)\}$;
\item[(2)] if $\alpha$ is non-zero, but `sufficiently' close to 0 and real, the level sets of $T$ are singletons, apart from $\{(123), (132)\}$;
\item[(3)] if $\alpha$ is non-zero, but `sufficiently' close to 0 and purely imaginary, the level sets of $T$ are singletons.
\end{itemize}

Thus we obtain the following proposition.

\begin{prop}\label{prop:S3}
Consider $\Gamma=S_3$. Let $P_0$ be the (central) projection in $\C[S_3]$ corresponding to the character of the (irreducible part of the) defining representation,  let $P_i$ denote the rank one projections in $\C[S_3]$ corresponding to the cases listed before the proposition ($i=1,2,3$). Denote by $A_i$ the respective quantum adjacency matrices  ($i=0,1,2,3$).
Then 
\[ \widehat{S_3} = \Qut(\widehat{S_3}, A_3) = \Qut(\widehat{S_3}, A_2)  \subsetneq \Qut(\widehat{S_3}, A_1) = D(\widehat{S_3}) \subsetneq \Qut(\widehat{S_3}, A_0).  \]
\end{prop}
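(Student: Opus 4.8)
The plan is to exploit the fact, noted just before Proposition~\ref{prop:faithfulirrep}, that $\Qut(\widehat{S_3}, A_i)$ is completely determined by the partition of $S_3$ into level sets of $T_i = \mathcal{F}(P_i)$: the defining relations force $a_{\tau,\sigma} = 0$ whenever $T_i(\tau) \neq T_i(\sigma)$, and nothing else depends on the actual values of $T_i$. A second general principle I would use is monotonicity: if the level-set partition of $T_i$ refines that of $T_j$, the relations for $A_i$ are stronger, so $\Qut(\widehat{S_3}, A_i)$ is a quantum subgroup of $\Qut(\widehat{S_3}, A_j)$. The first concrete step is therefore to record the four partitions. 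From cases (1)--(3), $A_3$ has only singleton blocks, $A_2$ has singletons together with the single block $\{(123),(132)\}$, and $A_1$ has blocks $\{e\}$, $\{(12),(23)\}$, $\{(13)\}$, $\{(123),(132)\}$. For the central projection $P_0$, the Fourier transform is a class function taking distinct values $2,0,-1$ on $\{e\}$, the transpositions, and the $3$-cycles, so $A_0$ has the three conjugacy classes as its blocks. These form a refinement chain $A_3 \preceq A_2 \preceq A_1 \preceq A_0$, which by monotonicity already delivers the chain of inclusions; it then remains to identify each group and pin down which inclusions are strict.

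For $A_3$ I would argue that $T_3$ is injective, so $\alpha$ must be diagonal, $\alpha(\sigma) = \sigma \otimes a_{\sigma,\sigma}$; the relation $a_{\sigma,\sigma}a_{\sigma',\sigma'} = a_{\sigma\sigma',\sigma\sigma'}$ (the diagonal case of $a_{g,h_1h_2} = \sum_{g_1} a_{g_1,h_1} a_{g_1^{-1}g,h_2}$) makes $\sigma \mapsto a_{\sigma,\sigma}$ a unitary representation, so $\alpha$ is the comultiplication and $\Qut(\widehat{S_3}, A_3) = \widehat{S_3}$. The case $A_2$ is the first genuinely new point, since a priori $\alpha$ could mix the two $3$-cycles. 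Here I would observe that the three transpositions lie in singleton blocks, so $\alpha$ is diagonal on them; writing $(123) = (12)(23)$ then gives $\alpha((123)) = \alpha((12))\alpha((23)) = (123) \otimes a_{(12),(12)}a_{(23),(23)}$, which carries no $(132)$-component, forcing $a_{(132),(123)} = 0$ by linear independence of group elements, and symmetrically $a_{(123),(132)} = 0$. Thus $\alpha$ is diagonal again and $\Qut(\widehat{S_3}, A_2) = \widehat{S_3}$.

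For $A_1$ I would note that its partition $\{e\}, \{(12),(23)\}, \{(13)\}, \{(123),(132)\}$ is exactly the decomposition of $S_3$ into the word-length spheres of the generating set $\{(12),(23)\}$ (lengths $0,1,3,2$ respectively). Since $\Qut$ sees only the partition, $\Qut(\widehat{S_3}, A_1)$ coincides with the quantum isometry group of $\widehat{S_3}$ for this generating set, computed in \cite{BhS10} (see also \cite{LDS12}) to be the doubling $D(\widehat{S_3})$. Finally, for $A_0$ the conjugacy-class partition is precisely the one treated in Proposition~\ref{prop:Sndifferent} with $n=3$, which supplies the two strict inclusions $\widehat{S_3} \subsetneq D(\widehat{S_3})$ and $D(\widehat{S_3}) \subsetneq \Qut(\widehat{S_3}, A_0)$. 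Assembling the two equalities with these strict inclusions along the refinement chain yields the stated result. I expect the main obstacle to be the $A_2$ step---verifying that collapsing the two $3$-cycles creates no new quantum symmetry---and, to a lesser extent, checking carefully that the $A_1$ level sets match the length spheres so that the computation of \cite{BhS10} can be invoked verbatim; all remaining inclusions are then formal consequences of monotonicity and of Proposition~\ref{prop:Sndifferent}.
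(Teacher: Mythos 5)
Your proposal is correct, and for most of the chain it follows the same route as the paper: both arguments reduce everything to the level-set partitions of $T_i=\mathcal F(P_i)$, use monotonicity of $\Qut$ under refinement of partitions, get $\Qut(\widehat{S_3},A_3)=\widehat{S_3}$ from injectivity of $T_3$ (the remark before Proposition~\ref{prop:faithfulirrep}), identify $\Qut(\widehat{S_3},A_1)$ with $D(\widehat{S_3})$ by matching the $A_1$-partition with the length spheres of $\{(12),(23)\}$ and citing \cite{BhS10}, and obtain both strict inclusions from Proposition~\ref{prop:Sndifferent}. The one place where you genuinely diverge is the equality $\Qut(\widehat{S_3},A_2)=\widehat{S_3}$. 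The paper first embeds $\Qut(\widehat{S_3},A_2)$ into $\Qut(\widehat{S_3},A_1)=D(\widehat{S_3})$ and then inspects the explicit doubling action formula \eqref{Doublingaction}: preserving the singleton level set $\{(12)\}$ forces the $\tau$-component to vanish, collapsing the doubling back to $\widehat{S_3}$. You instead argue intrinsically: since all three transpositions sit in singleton blocks, $\alpha$ is diagonal on them, and multiplicativity applied to $(123)=(12)(23)$ and $(132)=(23)(12)$ shows the off-diagonal entries $a_{(132),(123)}$ and $a_{(123),(132)}$ vanish, so $\alpha$ is diagonal everywhere and the diagonal entries form a unitary representation of $S_3$. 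Your version is more self-contained --- it never needs the structure of $D(\widehat{S_3})$ or the computation of \cite{LDS12} for this step --- while the paper's version gets the conclusion almost for free once the $A_1$ case is in hand. Both are sound; yours has the mild advantage of making transparent exactly which singleton blocks are responsible for rigidifying the action, which is the mechanism the paper only gestures at via the doubling formula.
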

\begin{proof}
The first equality is a consequence of the discussion before Proposition \ref{prop:faithfulirrep}. The third equality ($\Qut(\widehat{S_3}, A_1) = D(\widehat{S_3})$) follows from the fact that the level sets for the relevant multiplier correspond precisely to these arising from the length function considered in \cite[Theorem 4.2]{BhS10} and the statement of that theorem.
The behaviour of multiplier sets described before the formulation of the statement we are proving implies that $\Qut(\widehat{S_3}, A_2)$ must be a quantum subgroup of $\Qut(\widehat{S_3}, A_1)$. But inspecting the action formula for $D(\widehat{S_3})$ given in \eqref{Doublingaction} for $i=1$ shows that the fact that the action preserves the singleton level set $\{s_1\}$ forces the second factor in the quotient of the doubling to disappear. Thus we end up with $\widehat{S_3} = \Qut(\widehat{S_3}, A_2)$.

Finally the fact that the inclusions are strict follows from a direct inspection in the first case and from Proposition \ref{prop:Sndifferent} in the second case.
\end{proof}

The above proposition allows us to deduce that central quantum Cayley graphs in general cannot suffice for a quantum version of Watkins Theorem. 

\begin{prop} \label{prop:noncentral}
The quantum group $\G=\widehat{S_3}$ does not admit any \emph{central} quantum Cayley graph $A$ such that $\G= \Qut(C(\G), A)$. 
\end{prop}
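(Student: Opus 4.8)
The plan is to exploit the fact that a \emph{central} projection $P \in \C[S_3]$ has Fourier transform $T = \mathcal{F}(P)$ which is a \emph{class function} on $S_3$, and to combine this rigidity with the explicit computation already carried out in Proposition~\ref{prop:S3}. First I would recall that the central projections in $\C[S_3] = \bigoplus_{\pi \in \textup{Irr}(S_3)} M_{n_\pi}$ are precisely the sums $P = \sum_{\pi \in \Sigma} p_\pi$ of block identities over subsets $\Sigma \subseteq \textup{Irr}(S_3) = \{\textup{triv}, \textup{sgn}, \textup{std}\}$. By Lemma~\ref{PeterWeyl}, the corresponding Fourier transform is $T(g) = \frac{1}{|S_3|}\sum_{\pi \in \Sigma} n_\pi \overline{\chi_\pi(g)}$, which is constant on conjugacy classes. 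Consequently the level sets of $T$ are always unions of conjugacy classes; in particular $T$ can never separate the three transpositions from one another, nor the two $3$-cycles from one another.

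Next, recalling from the discussion preceding Proposition~\ref{prop:faithfulirrep} that $\Qut(\widehat{S_3}, A_P)$ depends only on the level sets of $T$ (through the relations $a_{g,h} = 0$ whenever $T(g) \neq T(h)$), I would record the elementary monotonicity: if the level-set partition of $T_1$ refines that of $T_2$, then the defining relations for $\Qut(\widehat{S_3}, A_{T_1})$ contain those for $\Qut(\widehat{S_3}, A_{T_2})$, so $\Qut(\widehat{S_3}, A_{T_1})$ is a quantum subgroup of $\Qut(\widehat{S_3}, A_{T_2})$. The finest partition attainable by a class function is the conjugacy-class partition $\{\{e\},\ \{(12),(13),(23)\},\ \{(123),(132)\}\}$, and a direct character-table computation shows that this is realised exactly when $\Sigma = \{\textup{std}\}$ (the three class-values being proportional to $4, 0, -2$, hence distinct), i.e. by the projection $P_0$ and adjacency matrix $A_0$ of Proposition~\ref{prop:S3}. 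Adding the trivial representation to $\Sigma$ merely shifts $T$ by a constant and so leaves the partition unchanged, while adding the sign representation only coarsens it (merging $\{e\}$ with the $3$-cycles, or the transpositions with the $3$-cycles).

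Finally I would invoke Proposition~\ref{prop:S3}, which gives $\widehat{S_3} \subsetneq D(\widehat{S_3}) \subsetneq \Qut(\widehat{S_3}, A_0)$. Since every central projection $P$ yields a level-set partition that is coarser than, or equal to, the conjugacy-class partition realised by $A_0$, the monotonicity above gives $\Qut(\widehat{S_3}, A_P) \supseteq \Qut(\widehat{S_3}, A_0) \supsetneq \widehat{S_3}$ for every central $P$ (including the degenerate cases $P = 0$ or $P = p_{\textup{triv}}$, whose one-block partition is coarser still). Hence no central quantum Cayley graph can have quantum automorphism group equal to $\widehat{S_3}$, proving Proposition~\ref{prop:noncentral}.

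The step I expect to need the most care is the monotonicity comparison: one must check that all the groups $\Qut(\widehat{S_3}, A_P)$ are realised as quotients of the single algebra $\cO(\Qut(\widehat{S_3}))$ by the appropriate families of relations $a_{g,h} = 0$, so that a containment of relation sets genuinely translates into a quantum-subgroup inclusion. The conceptual heart of the argument, however, is simply that a class function is blind to the internal structure of a conjugacy class, and $S_3$ has conjugacy classes of size $>1$, so central quantum Cayley graphs can never separate enough points; the explicit lower bound $\Qut(\widehat{S_3}, A_0) \supsetneq \widehat{S_3}$ from Proposition~\ref{prop:S3} then upgrades this obstruction into the desired non-existence statement.
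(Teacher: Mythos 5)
Your proof is correct, and it reorganises the paper's argument around a cleaner principle. The paper proves Proposition~\ref{prop:noncentral} by enumerating all central projections in $\C[S_3]\approx M_2\oplus\C\oplus\C$ (calling the minimal ones $P,Q,R$) and checking each case by hand: $P=A_0$ is handled by Proposition~\ref{prop:S3}; $R$, $P+Q+R$ and $P+Q$ give multipliers whose level sets impose no restrictions at all, so the resulting quantum group is all of $\Qut(M_2\oplus\C\oplus\C)$; and $Q$, $Q+R$, $P+R$ are reduced to the $A_0$ case via identities like $Q+R=I-P$. You instead isolate the two facts doing all the work: (i) $\Qut(\widehat{S_3},A_T)$ is the quotient of $\cO(\Qut(\widehat{S_3}))$ by the relations $a_{g,h}=0$ for $T(g)\neq T(h)$, so a refinement of level-set partitions gives a reversed inclusion of quantum groups; and (ii) a central $P$ has a class-function Fourier transform, whose partition is therefore never finer than the conjugacy-class partition, which is exactly the partition of $A_0$. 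This collapses the case analysis to the single invocation of $\widehat{S_3}\subsetneq\Qut(\widehat{S_3},A_0)$ from Proposition~\ref{prop:S3}, and it makes transparent what the obstruction really is (central multipliers cannot see inside conjugacy classes), a formulation that would generalise verbatim to any $\widehat\Gamma$ for which the conjugacy-class-partition Cayley graph has excess quantum symmetry. The step you flagged as delicate --- that all the $\Qut(\widehat{S_3},A_P)$ really are quotients of one algebra by families of relations $a_{g,h}=0$ --- is exactly what the discussion at the start of Section~\ref{sec:rigid-calyey} (together with Proposition~\ref{prop:genformGamma}) establishes, so the monotonicity is legitimate. One cosmetic slip: the conjugacy-class partition is realised not \emph{exactly} when $\Sigma=\{\textup{std}\}$ but also when $\Sigma=\{\textup{std},\textup{triv}\}$, as your own next sentence about constant shifts acknowledges; this does not affect the argument, since all you need is that every class function's partition is coarser than or equal to that of $A_0$.
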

\begin{proof}
Let us write explicitly $\mathbb{C}[S_3] \approx M_2 \oplus \C \oplus \C$, with the first factor given by the canonical irreducible representation, the second corresponding to the sign representation, and the third to the trivial one. Denote the respective non-zero minimal central projections in $\mathbb{C}[S_3]$ by $P$, $Q$ and $R$.

Proposition \ref{prop:S3} shows that if we work with $P$, we have   $\G \subsetneq\Qut(\widehat{S_3}, A_0)$ (using the notation above, where $P=A_0$). 

On the other hand the multiplier functions corresponding to $R$ (i.e.\ $\delta_e$) and to $P+Q+R$ (i.e.\ constant function equal $1$) do not give any  extra restrictions regarding the quantum group action.
Thus if we consider the last projection we see that $\Qut(\widehat{S_3}, R) \approx \Qut(M_2 \oplus \C \oplus \C)$, which is even larger then $\Qut(\widehat{S_3}, A_0)$. The same happens when we replace $R$ by $P+Q+R=I$ or by $P+Q=I-R$. 

Finally by the same argument $\Qut(\widehat{S_3}, Q) = \Qut(\widehat{S_3}, Q+R)  = \Qut(\widehat{S_3}, P+R) = \Qut(\widehat{S_3}, A_0)$ (as $Q+R= I-P$).
\end{proof}

\smallskip
\noindent {\bf Acknowledgments. } M.B.\ was partially supported by an NSERC Discovery Grant.  A.S.\ was partially supported by the National Science Center (NCN) grant no. 2020/39/I/ST1/01566.
J.M.\ was supported by JSPS KAKENHI Grant Number JP23KJ1270. M.W\ was partially supported by the National Science Center, Poland (NCN) grant no. 2021/43/D/ST1/01446. 
The project is co-financed by the Polish National Agency for Academic Exchange within the Polish Returns Programme. 

We acknowledge fruitful discussions with Roland Vergnioux and Moritz Weber at various stages of the project and thank the anonymous referee for careful reading of our article and useful comments improving the presentation. 

\vspace{5 pt}
\includegraphics[scale=0.5]{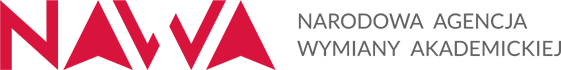}

\printbibliography

\end{document}